\newtheorem{theorem}{Theorem}[section]
\newtheorem{proposition}[theorem]{Proposition}
\newtheorem{notation}[theorem]{Notation}
\newtheorem{definition}[theorem]{Definition}
\newtheorem{corollary}[theorem]{Corollary}
\newtheorem{prop-def}[theorem]{Définition-Proposition}
\newtheorem{remark}[theorem]{Remark}
\newtheorem{lemma}[theorem]{Lemma}
\newcommand{\sh}{\shuffle}
\newcommand{\Q}{ {\mathbb Q} }
\newcommand{\R}{ {\mathbb R} }
\newcommand{\Z}{ {\mathbb Z} }
\newcommand{\CZ}{ {\mathcal{Z}} }
\newcommand{\CM}{ {\mathcal{M}} }
\newcommand{\HT}{ {\hat{\otimes}} }
\newcommand{\Wsh}{ {W_\sh }}
\newcommand{\Wst}{ {W_\star }}
\newcommand{\N}{ {\mathbb N} }
\newcommand{\K}{ {\Bbbk} }
\newcommand{\uk}{ {\underline{k}} }
\newcommand{\D}{ {D_{\bullet \bullet}} }
\newcommand{\DG}{ {\mathscr{D}_{\bullet \bullet}(G)} }
\newcommand{\DGH}{ {\widehat{\mathscr{D}}_{\bullet \bullet}(G)} }
\newcommand{\DGHE}{ {\widehat{\mathscr{D}}_{\bullet \bullet}(\{e\})} }
\newcommand{\ls}{ \mathfrak{ls}}
\newcommand{\dmr}{ \mathfrak{dmr}_0}
\newcommand{\ds}{ \mathfrak{ds}}
\newcommand{\Dsh}{ \mathrm{Dsh}}
\title{Bigraded Lie algebras related to MZVs}
\author{Mohamad MAASSARANI}
\address{IRMA, Université de Strasbourg, 7 rue René Descartes, 67084
Strasbourg, France}
\email{maassarani@math.unistra.fr}
\begin{document}
\maketitle
\begin{abstract}
We prove that the dihedral Lie coalgebra $\D:={\oplus}_{k\geq m \geq 1} D_{m,k}$ corresponding to $\DGH$ of \cite{Gon} for $G=\{e\}$  is the bigraded dual of the linearized double shuffle Lie algebra $\ls:={\oplus}_{k\geq m \geq 1}\ls_m^k\subset \Q\langle x,z \rangle$ of \cite{Brown} whose Lie bracket is the Ihara bracket initially defined over $\Q\langle x,z \rangle$. This by constructing an explicit isomorphism of bigraded Lie coalgebras $\D \to \ls^\vee$, where $\ls^\vee$ is the Lie coalgebra dual (in the bigraded sense) to $\ls$. The work leads to the equivalence between the two statements: "$\D$ is a Lie coalgebra with respect to Goncharov's cobraket formula in \cite{Gon}" and "$\ls$ is preserved by the Ihara bracket". We also prove folklore results from \cite{Brown} and \cite{IKZ} (that apparently have no written proofs in the literature) stating that for $m \geq 2$: $D_{m,\bullet}:=\oplus_{k\geq m} D_{m,k}$ is graded isomorphic (dual) to the double shuffle space  $\Dsh_m:=\oplus_{k\geq m} \mathrm{Dsh}_{m}({{k}-m})  \subset \Q[x_1,\dots,x_m]$ of \cite{IKZ} (stated in \cite{IKZ}), and that the linear map $f_m: \Q\langle x,z \rangle_m \to \Q[x_1,\dots,x_m]$, where $\Q\langle x,z \rangle_m$ is the space linearly generated by monomials of $\Q\langle x,z \rangle$ of degree $m$ with respect to $z$, given by $x^{n_1}z\cdots x^{n_m}zx^{n_{m+1}+1}\mapsto \delta_{0,n_{m+1}} x_1^{n_1}\cdots x_{n_m}^{n_m}$, with $\delta_{a,b}$ the Kronecker delta, restricts to a graded isomorphism $\bar{f}_m:\ls_m:=\oplus_{k\geq m} \ls_m^k \to \mathrm{Dsh}_{m}$ (stated in \cite{Brown}). Here, we establish three explicit compatible isomorphisms $D_{\bullet \bullet} \to  \mathfrak{ls}^\vee, D_{m\bullet}\to \mathrm{Dsh}_{m}^\vee$ and $\bar{f}_m: \mathfrak{ls}_m \to \mathrm{Dsh}_{m}$, where $\mathrm{Dsh}_{m}^\vee$ is the graded dual of $\mathrm{Dsh}_{m}$.
\end{abstract}

\section*{Introduction}

\subsection*{Context and main results }\quad \\\\
Multiple zeta values (MZVs) are real numbers generalising the values of the Riemann zeta function at positive integers. These numbers were first studied by Euler then reappeared recently in geometry, knot theory, quantum algebra and arithmetic geometry. The study of relations between MZVs over $\Q$ is a well known problem subject to many conjectures.\\\\
A MZV is a real number of the form:
\begin{equation}\label{MZV} \zeta(\uk)= \sum_{n_1>\dots>n_m \geq 1} \frac{1}{n_1^{k_1}\cdots n_m^{k_m}}, \end{equation}
where $\uk=(k_1,\cdots,k_m) \in (\N^*)^m$ with $k_1 \geq 2$ and $m\geq 1$. We say that $\zeta(\uk)$ is of depth $m$ and weight $k=k_1+\cdots+k_m$. In the literature, we find various polynomial relations between MZVs over $\Q$. For instance double shuffle relations (\cite{IKZ}) are $\Q$-linear relations between MZVs. One can also obtain relations between MZVs by comparing different regularisations of divergent MZVs ($\zeta(\uk)$ with $k_1=1$). These new relations with the double shuffle relations are called the regularised (or also known as extended) double shuffle relations (\cite{IKZ}). One of the main conjectures on MZVs is that the extended double shuffle relations suffice to describe the $\Q$-subalgebra $\CZ$ of $\R$ generated by the MZVs (\cite{IKZ}). We describe briefly the conjecture. Denote by $\tilde{\CZ}^f$ the $\Q$-algebra linearly spanned by $1$ and the symbols $\zeta^f$ (for $\zeta$ running over all the MZVs) equipped with a product mimicking the shuffle product (\cite{IKZ}) of  MZVs. Now consider the quotient $\CZ^f$ of $\tilde{\CZ}^f$ by the formal analogues of the extended double shuffle relations. The main conjecture is equivalent to the following statement:
\begin{center}
the mapping $\CZ^f \to \CZ$ given by $\zeta^f \mapsto \zeta$ is an isomorphism of algebras. 
\end{center} 
The algebra $\CZ^f$ described here is the algebra $R_{EDS}$ of \cite{IKZ} and the above statement corresponds to conjecture 1 of the same paper.\\\\
Denote by $\CZ_{{+}}$ the vector subspace of $\CZ$ generated by the MZVs and denote by $ \CZ_k^{(m)}$ (for $k \geq m \geq 1$) the subspace of $\CZ$ generated by MZV of weight $k$ and depth less than or equal to $m$ (we also set $\CZ_k^{(0)}=0$). The standard conjectures lead to the study of: 
$$  \CM:= \underset{k\geq m \geq 1}{\oplus} \CM_{k,m},\qquad    \CM_{k,m}:= \CZ_k^{(m)}/  (\CZ_k^{(m-1)}+ \CZ_k^{(m)} \cap \CZ_{{+}}^2).  $$
Conjecturally, the space $\CM$ should be isomorphic to the associated graded of $\CZ_{{+}}/ \CZ_{{+}}^2$ with respect to the weight filtration and the dimension $C_{k,m}$ of $\CM_{k,m}$ should correspond to the number of generators of $\CZ$ of depth $m$ and weight $k$. A conjectural formula for $C_{k,m}$ is known (appendix of \cite{IKZ}).\\\\
For $G$ an abelian group, Goncharov introduces in \cite{Gon1} a bigraded Lie coalgebra called the dihedral Lie coalgebra $\D(G)$  also denoted by $\DGH$ in \cite{Gon}. In this paper, we mean by the dihedral Lie coalgerbra the coalgebra $\DGH$ for $G=\{e\}$ which we denote by $\D$. The bigraded vector space $\CM$ is a quotient vector space of $\D$. Goncharov computes the dimension of the bidegree $(m,k)$ part $D_{m,k}$ (of weight $k$ and depth $m$) of $\D$, for $m=1,2,3$. He also proves that $D_{m,k}=0$  if $k+m$ is odd (parity result). From theses results he deduces that as conjectured $C_{k,m}=0$ if $k+m$ is odd and gives upper bounds for $C_{k,1}, C_{k,2},C_{k,3}$ corresponding to the conjectural values of these numbers. The cohomology of $\D$ plays an important role in the computations, this explains the importance of the Lie structure of $\D$.\\\\

In (\cite{Racinet}) Racinet introduces a prounipotent group scheme $\mathrm{DMR}_0$ whose Lie algebra is $\dmr$ which Lie bracket is the Ihara bracket. The Lie algebra $\dmr$ is a complete graded (for weight) Lie algebra equipped with a depth decreasing filtration compatible to the "grading". One can consider a Lie subalgebra $\ds$ of $\dmr$ graded for weight and whose degree completion (for weight) is isomorphic to $\dmr$. The coordinate ring $\mathcal{O}( \mathrm{DMR}_0)$ of $\mathrm{DMR}_0$ and $\bar{\CZ}^f:= \CZ^f/ \zeta(2)^f \CZ^f$ are isomorphic algebras and therefore the graded dual $\mathcal{U}( \ds)^\vee$ (for weight) of the enveloping algebra $\mathcal{U}( \ds)$ of $\ds$ is isomorphic to the algebra $\bar{\CZ}^f$ (\cite{EL}). This implies that the graded dual $\ds^\vee$ (for weight) of $\ds$ is isomorphic to $\bar{\CZ}_{{+}}^f/(\bar{\CZ}_{{+}}^f)^2$, where $\bar{\CZ}_{{+}}^f \subset \bar{\CZ}^f$ is the ideal generated by the symbols $\zeta^f$.\\\\
A bigraded version $\ls$ (linearized double shuffle space) of the vector space $\dmr$ (or $\ds$) is defined by Brown (\cite{Brown}). The space $\ls$ is a bigraded subspace of the free associative algebra $\Q\langle x,z \rangle$ on the indeterminates $x,z$. Here $\Q\langle x,z \rangle$ is bigraded with respect to the total degree (weight) and the partial degree with respect to $z$ (depth). One has a natural injective map from the associated graded of $\dmr$ (and $\ds$) with respect to depth filtration to $\ls$. This map should be an isomorphism except in bidegree $(1,1)$ (\cite{Brown}). In the same paper Brown states that one can show that $\ls$ is preserved by the Ihara bracket $\{-,-\}$ (defined over $\Q\langle x,z \rangle$) by adapting the work of \cite{Racinet} for $\dmr$.
Schneps proves in \cite{Schnepsari} (see theorem 3.4.3 and its proof) that $\ls$ is preserved by the Ihara bracket. The proof uses the theory of (bi)moulds introduced by \'Ecalle (see for example \cite{Ecalle}) and an analogy between the theory of (bi)moulds and series in non-commutative variables $x,z$ established by Racinet in \cite{RacinetPhd}.\\\\
In this paper we show (subsection \ref{proof a,b}) that: 
\begin{itemize}
\item[(a)] the Lie coalgebra $\D$ and the Lie algebra $\ls$ equipped with the Ihara bracket $\{-,-\}$ are dual in the bigraded sense.
\end{itemize}
For that, we construct an explicit isomorphism of bigraded Lie coalgebras $\D \to \ls^\vee$, where $\ls^\vee$ is the bigraded dual of $\ls$ equipped with cobracket dual to $\{-,-\}_{\vert \ls}$.\\\\
The work leads to the following equivalence (subsection \ref{proof a,b}) between the results of Goncharov and Brown-Schneps:
\begin{itemize}
\item[(b)] The bigraded vector space $\D$ is a Lie coalgebra under the formulas given by Goncharov (\cite{Gon}) if and only if the bigraded space $\ls$ is preserved by the Ihara bracket.\\
\end{itemize}
In \cite{IKZ}, the authors construct, for $m\geq 2$, the double shuffle subspace:
$$\Dsh_m={\oplus}_{k \geq m}\mathrm{Dsh}_{m}({{k}-m}) \subset \Q[x_1,\dots,x_m].$$ The space $\CM_{k,m}$ is naturally a quotient  of the dual of the double shuffle space $\mathrm{Dsh}_m(k-m)$. It is conjectured in \cite{IKZ} that $\mathrm{Dsh}_m(k-m)$ and $\CM_{k,m}$ have the same dimensions for $k\geq m \geq 2$. The authors also compute the dimension of $\mathrm{Dsh}_m(d)$ for $m=1,2$ (give estimates for $m=3$) and they obtain a parity result similar to the one obtained for $\D$ in \cite{Gon}. These results give the same upper bounds for $C_{k,m}$ (for $m=1,2$) obtained by Goncharov and also imply the parity result for $C_{k,m}$.\\\\ 
In \cite{IKZ}, footnote page 335, it is stated (without a proof) that the spaces $D_{m,k}$ and $\mathrm{Dsh}_m(k-m)$ are isomorphic. Therefore $D_{m,k}$ and $\CM_{k,m}$ are conjecturally isomorphic for $k\geq m \geq 2$. In subsection \ref{Subsection Dsh ls D} (paragraph \ref{Proof (c)}), we prove that:
\begin{itemize}
\item[(c)] For $m\geq 2$, the depth $m$ part $D_{m,\bullet}$ of $\D$ is isomorphic as a graded space (for weight) to the graded dual $\Dsh_m^\vee$ of $\Dsh_m=\oplus_{k\geq m}\mathrm{Dsh}_{m}({{k}-m})$. 
\end{itemize}
To do so we contruct an explicit isomorphism of graded spaces $D_{m,\bullet} \to \Dsh_m^\vee$.\\\\
 In \cite{Brown}, Brown also states, without giving a proof that the map $f_m :\Q\langle x,z \rangle_m \to \Q[x_1,\cdots,x_m]$ (defined in $(d)$ below) restricts to an isomorphism of graded spaces between the depth $m$ part $\ls_m$ of $\ls$ and $\Dsh_m$  (for $m \geq 2$). In subsection \ref{Subsection Dsh ls D} (paragraph \ref{Proof (d)}), we prove that:
\begin{itemize}
\item[(d)] For $m\geq2$, the linear map $f_m :\Q\langle x,z \rangle_m \to \Q[x_1,\cdots,x_m], x^{n_1}z\cdots x^{n_m}zx^{a} \mapsto \delta_{a0} x_1^{n_1} \cdots x_m^{n_m}$, where $\delta_{a0}$ is the Kronecker delta and $\Q\langle x,z \rangle_m$ is the depth $m$ part of $\Q\langle x,z \rangle$, restricts to an isomorphism $\ls_m \to \Dsh_m$ of weight graded spaces compatible to the isomorphisms constructed to show $(a)$ and $(c)$.\\ 
\end{itemize}
By combining the isomorphisms constructed to prove $(a)$ and $(c)$ we get an isomorphism of graded spaces between $\ls_m $ and $\Dsh_m$. Result $(d)$ allows to understand this isomorphism. Note that the duality between \textbf{the underlying bigraded vector spaces} of $\D$ and $\ls$ (in depth $m\geq 2$) can be deduced from $(c)$ and $(d)$.\\\\
The following table, gives a clearer view of the results in this paper and the contributions of other authors:
\[\begin{tabu} to 0.8\textwidth { | X[c] | X[c] | X[c] | }
 \hline
Result & Stated in & Proved in \\
 \hline
(a)   &  This paper &  This paper  \\
\hline
 (b)  & This paper & This paper \\
\hline
(c)  & \cite{IKZ}  & This paper  \\
\hline
(d)  & \cite{Brown}  & This paper \\
\hline

\end{tabu}\]
\subsection*{Outline of the paper}\quad \\\\
Through the paper $\K$ is a field of characteristic zero. The spaces $\D, \ls$ and $\Dsh_m$ are originally $\Q$-vector spaces and we should take $\K=\Q$, but the definitions of these spaces and the results can be easily given for any field of characteristic zero.\\\\
The first section contains reminders that will be used in the sequel of the pape on: free associative algebras, pairings of bigraded spaces, adjoints, duality between bigraded Lie algebras and Lie coalgebras, and duality between some spaces related by given series.\\\\
In the second section we recall the definition of the linearized double shuffle Lie algebra $\ls$ (\cite{Brown}). In subsection \ref{ssection ls} we introduce the bigraded space $\ls \subset \K\langle x,z \rangle$ and define the Ihara Lie bracket $\{-,-\} : \K\langle x,z \rangle^{\otimes 2} \to \K\langle x,z \rangle$ preserving $\ls$. We then give in subsection \ref{def Dshm} the definition of the double shuffle space $\Dsh_m$ (for $m\geq 2$) of \cite{IKZ}.\\\\
Section \ref{diheral coalgebra} is devoted to the dihedral Lie colagebra $\D$. An alternative definition of $\D$ more suitable for the paper is given. We define, using generating series similar to those in \cite{Gon}, a bigraded quotient space $W/W_R$, where $W$ is a free vector space and construct in proposition \ref{definition dihedral} (and it's proof) a natural isomorphism of bigraded spaces $\eta: \D \to W/W_R$. Then, we define a map $\tilde{\delta}: W \to W\otimes W$ inducing a bigraded Lie cobracket $\delta$ on $W/W_R $ for which $\eta:\D \to W/W_R$ becomes a Lie isomorphism (theorem \ref{Lie structure dihedral}). The construction of $W/W_R$ and $\delta$ is a transcription of the work of \cite{Gon}. We also define extensions $V$ and $F$ of $W$ and $W_R$ by a bigraded free vector space $U$, to get a new quotient space $V/F\simeq W/W_R$ that will be used in the next sections.\\\\
In section \ref{Section series}, we introduce a family of series $\{Q_m\}_m$ in commuting variables with values in the free associative algebra $\K\langle x,z \rangle$ on two indeterminates $x,z$, and show that the shuffle product (a product defined over $\K\langle x,z \rangle$ denoted by $\sh$)  of $Q_p$ and $Q_q$ is given by: 
$$ Q_p(t_1,\dots,t_p) \sh Q_q(t_{p+1},\dots,t_{p+q})=\underset{\sigma \in S(p,q)}{\sum} Q_{p+q}(t_{\sigma^{-1}(1)},\dots, t_{\sigma^{-1}(p+q)}),$$
where $S(p,q)$ is the set of $(p,q)$-shuffles.  This formula is a formal analogue of formula (15) of \cite{Gon1} for generating series of iterated integrals in the case where the iterated integrals represent MZVs and it will be used in section \ref{section duality}.\\\\
In subsection \ref{orthogonal ls}, we give a decomposition (corollary \ref{ls orth}) of the orthogonal complement $\ls^{\perp}$ of $\ls$ with respect to the "canonical pairing" $\langle - , -\rangle$ of $\K \langle x,z \rangle$, the pairing for which unitary monomials form an orthonormal basis. In subsection \ref{morphism phi}, we introduce in proposition \ref{morphisme phi} an isomorphism of bigraded spaces $\phi : V\to \K \langle x,z \rangle$ and then show (corollary \ref{phi F}) that the image of $F$ by $\phi$ is $\ls^{\perp}$. From these results we deduce in subsection \ref{proof orthogonality} that $F\subset V$ is the space orthogonal to $\ls$ with respect to a perfect pairing $\langle\phi(-),-\rangle: V\otimes \K \langle x,z \rangle \to \K$ denoted by $\langle-,-\rangle_{\phi}$.\\\\
In  subsection \ref{Ih Co}, we compute the adjoint $\{-,-\}^*$ of $\{-,-\}$ with respect to the "canonical pairings" $\langle - , -\rangle$ and $\langle - , -\rangle^{\otimes2}$ of $\K \langle x,z \rangle$ and $\K \langle x,z \rangle^{\otimes 2}$. More precisely, we compute (proposition \ref{coIh P}), for $m\geq 0$, the image of a generating series $P(v_0,\dots,v_m)$ of unitary monomials of depth $m$ of $\K \langle x,z \rangle$ by $\{-,-\}^*$.  For that we introduce different operators allowing us to decompose $\{-,-\}^*$ (definition \ref{def di} and proposition \ref{decomp coIhara}), we describe these operators (proposition \ref{di*}) to then compute (lemma \ref{lemma dP}) the image of $P(v_0,\dots,v_m)$ by these operators. In subsection \ref{sec comp Ih delta}, we deduce (proposition \ref{comp Ih delta}) using the formula for $\{-,-\}^*(P(v_0,\dots,v_m))$ that $(\tilde{\delta} -\phi^\star \{-,-\})(W) \subset W_R\otimes V + V\otimes W_R$ where $\phi^\star\{-,-\}^*$ is the pullback by $\phi$  of $\{-,-\}^*$. We also prove in subsection \ref{Kxz sh x}  that the space $\phi(U)=\K\oplus \K \langle x, z \rangle \sh x $ is a coideal for $ \{-,-\}^*$ and therefore $U$ is a coideal for $\phi^\star\{-,-\}^*$. The results of this section will be used to show the main results $(a)$ and $(b)$ announced before.\\\\
In the last section, we prove the main results announced in the previous subsection of the introduction using essentially the results of section \ref{section duality} and section \ref{pullback coihara}. We show (subsection \ref{proof a,b}) that the following statements are equivalent: "$\ls$ is preserved by $\{-,-\}$, "$F$ is a coideal for $\phi^\star\{-,-\}^*$", "$W_R$ is a coideal for $\tilde{\delta}$", "$\D$ is a Lie coalgebra with respect to the formulas defined by Goncharov". This proves $(b)$. To show $(a)$ we prove that $\phi^\star \{-,-\}^*$ induces a Lie cobracket $\delta_{V/F}  : V/F\to V/F\otimes V/F$ and that we have the following diagram of bigraded Lie coalgebra isomorphisms: $ \D \overset{\eta}{\longrightarrow} (W/W_R,\delta) \overset{\bar{i}}{\longrightarrow}(V/F,\delta_{V/F}) \overset{\varepsilon}{\longrightarrow}  (\ls^\vee,\{-,-\}_{\vert \ls}^{\bar{\vee}})$, where $ (\ls^\vee,\{-,-\}_{\vert \ls}^{\bar{\vee}})$ is the bigraded Lie coalgebra dual to $(\ls,\{-,-\})$.\\
In subsection \ref{Subsection Dsh ls D}, we prove results $(c)$ (paragraph \ref{Proof (c)}) and $(d)$ (paragraph \ref{Proof (d)}). To prove $(c)$ we introduce a graded isomorphism $h_m: \K[x_1,\dots,x_m]^\vee \to W_m$ (the subscript $m$ is form the depth $m$ component) and show in proposition \ref{prop hWR}, using corollary \ref{cor ker coeff} of the reminders section, that the image by $h_m$ of the forms orthogonal to $\Dsh$ is $(W_R)_m$. This proves that $W_m/(W_R)_m$ is isomorphic to $\Dsh_m^\vee$ and $(c)$ follows since $W/W_R$ is isomorphic to $\D$. To prove $(d)$ we consider the graded map $h_m: \K[x_1,\dots,x_m]^\vee \to W_m$, the inclusion $i: W_m \to V_m$ and the map $\beta_m:V_m \to \K\langle x,z \rangle_m^\vee$ induced by the pairing $\langle-,-\rangle_\phi$ (the indices $m$ are for depth $m$ components). By theorem \ref{theorem duality}, remark \ref{rmk V/F} and proposition \ref{cor Dsh W} the graded map $\theta=\beta_m\circ i \circ h_m$ induces, with respect to the restriction maps $\K[x_1,\dots,x_m]^\vee \to\Dsh_m^\vee$ and $\K\langle x,z \rangle_m^\vee \to \ls_m^\vee$, an isomorphism $\Dsh_m^\vee\to \ls_m^\vee$. We show using a direct computation that $\theta=f_m^\vee$ and then deduce $(d)$ essentially using an algebraic argument.

\section{Algebra Reminders}\label{Section rappel}
The section contains some algebra reminders. In the first subsection we recall facts on free associative algebras. Most of the material in the first subsection can be found in \cite{Reut}. The second subsection contains basic reminders on bigraded pairings, adjoints of Lie brackets and duality between Lie algebras and Lie coalgebras. In the last subsection, we recall some facts on duality between spaces related by series.
\subsection{Free associative algebras}\label{FA}
We fix a field $\K$ of characteristic zero. Let $A$ be a set and $X$ be the collection of indeterminates $\{x_a\}_{a\in A}$. We denote by $\K\langle X \rangle$ the $\K$-free associative algebra over the indeterminates  $\{x_a\}_{a\in A}$.

\subsubsection{Shuffle product}\label{subsection shuffle}
We recall that for $(p,q)\in \N^2$ a $(p,q)$-shuffle is a permutation $\sigma$ of the set $[1,p+q]$  such that the restrictions $\sigma_{\vert [1,p]}$ and $\sigma_{\vert [p+1,p+q]}$ are increasing functions.  The set of all $(p,q)$-shuffles will be denoted by $S(p,q)$.\\\\
The shuffle product $\sh$ is an associative commutative product on $\K\langle X \rangle$ with unit $1$, given by:
\begin{align}\label{shuffle product}
 x_{a_1}\cdots x_{a_p} \sh x_{a_p+1}\cdots x_{a_p+q}&= \underset{\sigma \in S(p,q)}{\sum}x_{a_{\sigma^{-1}(1)}}\cdots x_{a_{\sigma^{-1}(p+q)}},
\end{align}
for $(a_1,\dots,a_{p+q}) \in A^{p+q} $ and $p,q>0$.
We can also define the shuffle product inductively by the following equation: 
\begin{align}\label{shuffle product12}
w_1x_{a_1} \sh w_2x_{a_2}= (w_1 \sh w_2x_{a_2})x_{a_1} + (x_{a_1}w_1 \sh w_2)x_{a_2},
\end{align}
for $x_{a_1},x_{a_2} \in X$ and $w,w_1,w_2 $ unitary monomials of $\K \langle X\rangle$.\subsubsection{Coproducts}\label{subsection coporducts}
There exists a unique morphism of algebras $\Delta_X : \K\langle X \rangle \to \K\langle X \rangle \otimes \K\langle X \rangle $ for the canonical product of  $\K\langle X \rangle$, called the shuffle coproduct, given by: 
\begin{equation}\label{definition coproduct}
 \Delta_X(x_{a})=1\otimes x_a +x_a \otimes 1, \text{ for } a\in A. 
\end{equation}
One also have the deconcatenation coproduct $\Delta_{\sh}^X: \K\langle X \rangle \to \K\langle X \rangle\otimes \K\langle X \rangle$  which is the linear map defined by: 
\begin{equation}\label{coproduct 2} 
\Delta_{\sh}^X(w)= \sum_{w_1w_2=w} w_1\otimes w_2,  \end{equation}
where $w,w_1$ and $w_2$ unitary monomial. The map $\Delta_{\sh}^X$ is a morphism for the shuffle product.

\subsubsection{Pairings and duality between products and coproducts}\label{pair adj}
The canonical pairing of $\K \langle X \rangle$ is the unique (perfect) pairing $ \langle - , -\rangle_X :   \K \langle X \rangle \otimes \K \langle X \rangle\to \K$ for  which the family of unitary monomials is an orthonormal basis. From this pairing we get a perfect pairing $\langle - , -\rangle_X^{\otimes 2}$ over $\K \langle X \rangle$, given by:
$$\langle w_1 \otimes w_2, w_1'\otimes w_2'\rangle_X^{\otimes 2}=\langle w_1 ,w_1'\rangle_X \langle  w_2,  w_2'\rangle_X,$$
for $w_1,w_2,w_1',w_2' \in \K \langle X \rangle$. The shuffle coproduct $\Delta_X$ is adjoint to the shuffle product and the deconcatenation coproduct $\Delta_{\sh}^X$ is adjoint to the canonical product of $\K \langle X \rangle$:
\begin{equation}\label{coproduct formula}
\langle \Delta_X (w) , w_1\otimes w_2\rangle_X^{\otimes 2}=\langle w, w_1\sh w_2\rangle_X\quad ,\quad \langle \Delta_\sh^X (w) , w_1\otimes w_2\rangle_X^{\otimes 2}=\langle w, w_1w_2\rangle_X, 
\end{equation}
for $w,w_1,w_2 \in \K\langle X\rangle$.
\subsubsection{Derivations}\label{derivations}
Let $d$ be a derivation of the free associative algebra $\K\langle X \rangle$ (for the canonical product). By applying the Leibniz rule we get: 
\begin{equation}\label{derivation rule}
 d(x_{a_1}\cdots x_{a_m})=\underset{i \in [1,m]}{\sum} x_{a_1} \cdots x_{a_{i-1}} d(x_{a_i}) x_{a_{i+1}}\cdots x_{a_m}, 
\end{equation}
for $m \in \N^*$ and $(a_1,\dots,a_{m}) \in A^m $ (if $m=0$, we recover $d(1)=0$). Hence, a derivation of $\K\langle X \rangle$ is entirely determined by its values on the indeterminates $x_a\in X$. Given $\{y_a\}_{a\in A} \subset \K\langle X \rangle$, one checks that the linear mapping $x_{a_1}\cdots x_{a_m}\mapsto \sum_{i \in [1,m]} x_{a_1} \cdots x_{a_{i-1}} y_{a_i}x_{a_{i+1}}\cdots x_{a_m}$ satisfies the Leibniz rule. It follows that:
\begin{proposition}\label{unique derivation}
Let $\{y_a\}_{a\in A}$ be a subset of $\K \langle X \rangle$. There exists a unique derivation $d$ of $\K \langle X \rangle$ sending the indeterminate $x_a$ to $y_a$ for $a\in A$.
\end{proposition}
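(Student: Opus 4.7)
The plan is to prove the two assertions separately: uniqueness by an induction that is essentially already displayed as equation \eqref{derivation rule}, and existence by explicitly writing down the candidate map suggested in the paragraph immediately before the statement and checking that it does satisfy the Leibniz rule.

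For uniqueness, I would argue as follows. Suppose $d$ and $d'$ are two derivations of $\K\langle X\rangle$ agreeing on every indeterminate $x_a$, so $d(x_a)=d'(x_a)=y_a$. Applying the Leibniz rule repeatedly, I would show by induction on the length $m$ of a unitary monomial $w=x_{a_1}\cdots x_{a_m}$ that $d(w)$ is forced to equal the right-hand side of \eqref{derivation rule}. The base cases $m=0$ (where $d(1)=d(1\cdot 1)=2d(1)$ forces $d(1)=0$) and $m=1$ (given) are immediate, and the inductive step factors $w=(x_{a_1}\cdots x_{a_{m-1}})x_{a_m}$ and uses Leibniz together with the induction hypothesis. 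Since unitary monomials span $\K\langle X\rangle$ and $d,d'$ are linear, we get $d=d'$.

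For existence, I would \emph{define} $d\colon \K\langle X\rangle \to \K\langle X\rangle$ on the basis of unitary monomials by
\[
d(x_{a_1}\cdots x_{a_m}) \;=\; \sum_{i=1}^{m} x_{a_1}\cdots x_{a_{i-1}}\, y_{a_i}\, x_{a_{i+1}}\cdots x_{a_m},
\]
with $d(1)=0$, and extend linearly. Clearly $d(x_a)=y_a$. The content is then to verify the Leibniz rule $d(uv)=d(u)v+ud(v)$ for $u,v\in\K\langle X\rangle$. By linearity it suffices to check it on pairs of unitary monomials $u=x_{a_1}\cdots x_{a_p}$ and $v=x_{a_{p+1}}\cdots x_{a_{p+q}}$; then $uv=x_{a_1}\cdots x_{a_{p+q}}$ and the defining formula splits the sum over $i\in[1,p+q]$ into $i\in[1,p]$ (giving $d(u)v$) and $i\in[p+1,p+q]$ (giving $ud(v)$), which is exactly the Leibniz identity.

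There is essentially no obstacle here: uniqueness is forced by Leibniz and existence is a direct verification on basis monomials. The only point that requires a tiny bit of care is handling the degenerate cases $p=0$ or $q=0$ (i.e.\ multiplication by $1$) in the Leibniz check, and verifying $d(1)=0$ is consistent with the convention that the empty sum is zero; both are immediate from the definition.
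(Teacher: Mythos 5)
Your proof is correct and follows essentially the same route as the paper, which derives formula \eqref{derivation rule} from the Leibniz rule to get uniqueness and then observes that the linear map defined by that formula satisfies the Leibniz rule, giving existence. Your write-up merely makes the induction and the splitting of the sum in the Leibniz check explicit.
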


\subsection{Pairings, adjoints, duality between Lie algebras and Lie coalgebras}\label{duality}
Throughout this subsection $A$ and $B$ are two birgraded $\K$-vector spaces:  
$$ A=\oplus_{m,n\geq 0} A_{m,n} \quad \text{and} \quad B=\oplus_{m,n\geq 0} B_{m,n}.$$
The spaces $A_{m,n}$ and $B_{m,n}$ are called homogenous elements  of bidegree $(m,n)$ of $A$ and $B$ respectively. A pairing $(-,-)_{A,B}: A\otimes B \to \K$ is a bigraded pairing if $(A_{m,n},B_{k,l})=0$ for $(m,n)\neq (k,l)$. Given a bigraded paring $ (-,-)_{A,B}: A\otimes B \to \K$ one can construct a bigraded pairing $ (-,-)_{A,B}^{\otimes 2}: (A^{\otimes2})\otimes( B^{\otimes2}) \to \K$ given by: 
 \begin{equation}\label{paring tens2}
(a\otimes a ',b\otimes b')_{A,B}^{\otimes 2}=(a,b)_{A,B}(a',b')_{A,B},\end{equation}
for $a,a'\in A$ and $b,b' \in B$. We assume that $A_{m,n}$ and $B_{m,n}$ (for $m,n\geq 0$) are finite dimensional, that we have a perfect pairing $(-,-)_{A,B}$ and a bigraded Lie bracket $g: B\otimes B \to B$.\\\\
For a given bigraded vector space $C$ we denote by $C^\vee$  its bigraded dual. Note that if $f$ is a bigraded linear map then the adjoint of $f^*$ with respect to perfect bigraded pairings is also a bigraded map.\\\\
Let $\varphi_{B,2}$ be the canonical isomorphism of bigraded spaces $(B\otimes B)^\vee \to B^\vee \otimes B^\vee$. The space $B^\vee$ equipped with $g^{\overline{\vee}}=\varphi_{B,2} \circ g^\vee$ ($g^\vee$ is the bigraded dual map of $g$) is a bigraded Lie coalgebra and the pair $(B,g^{\overline{\vee}})$ is called the bigraded dual of $(B,g)$. The following proposition is known to be true in the finite dimensional case and the proofs can be adapted to the bigraded case. 
\begin{proposition}\label{dual adjoint of lie bracket}
Let  $g^*:A\to A\otimes A$ be the adjoint of $g$ with respect to the pairings $(-,-)_{A,B}$ and $(-,-)_{A,B}^{\otimes2}$.
\baselineskip 18pt
\begin{itemize}
\item[1)]  The map $g^*$ is a bigraded Lie cobracket.
\item[2)] We have an isomorphism of bigraded Lie coalgebras (i.e. respecting the bigrading) $(A,g^*)\simeq (B^\vee, g^{\overline{\vee}})$ given by $a_{m,n}\mapsto (b_{m,n}\mapsto (a_{m,n},b_{m,n})_{A,B})$, for $a_{m,n} \in A_{m,n}, b_{m,n} \in B_{m,n}$ and $m,n\geq 0$.
\end{itemize}
\end{proposition}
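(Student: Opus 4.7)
The plan is to exhibit the explicit bigraded isomorphism $\psi : A \to B^\vee$ afforded by the perfect pairing, and to verify directly that it intertwines $g^*$ with $g^{\overline{\vee}}$. Both assertions then fall out of this single compatibility: since $g^{\overline{\vee}}$ is by construction a bigraded Lie cobracket on $B^\vee$, transporting it along $\psi^{-1}$ yields a bigraded Lie cobracket on $A$, and the compatibility statement says that this transported cobracket is precisely $g^*$. This gives (1), while the same identity is literally the statement that $\psi$ is an isomorphism of bigraded Lie coalgebras $(A,g^*) \to (B^\vee, g^{\overline{\vee}})$, which is (2).

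For the first step, I would define $\psi(a)(b) = (a,b)_{A,B}$. Because the pairing is bigraded and perfect and each $A_{m,n}, B_{m,n}$ is finite-dimensional, $\psi$ restricts in each bidegree to an isomorphism $A_{m,n} \to (B_{m,n})^*$ and hence assembles to a bigraded isomorphism $A \to B^\vee$. The core computation is then: for $a \in A$ and $b,b' \in B$, using the defining property of the adjoint,
\[ ((\psi \otimes \psi)(g^*(a)))(b \otimes b') = (g^*(a), b \otimes b')_{A,B}^{\otimes 2} = (a, g(b \otimes b'))_{A,B}, \]
while on the other hand, using $g^{\overline{\vee}} = \varphi_{B,2} \circ g^\vee$ and the definition of the dual map,
\[ (g^{\overline{\vee}}(\psi(a)))(b \otimes b') = \psi(a)(g(b \otimes b')) = (a, g(b \otimes b'))_{A,B}. \]
Here I am using $\varphi_{B,2}$ to view $(\psi \otimes \psi)(g^*(a)) \in B^\vee \otimes B^\vee$ as a functional on $B \otimes B$. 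Equality of the two right-hand sides yields $(\psi \otimes \psi) \circ g^* = g^{\overline{\vee}} \circ \psi$, which is the desired compatibility.

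I do not expect a genuine obstacle: the argument is pure definition-chasing once the adjoint, the dual map, and the canonical isomorphism $\varphi_{B,2}$ are carefully unpacked. The only point worth emphasizing is that both the identification $\varphi_{B,2} : (B \otimes B)^\vee \to B^\vee \otimes B^\vee$ and the fact that $\psi$ lands in the \emph{bigraded} dual $B^\vee$ (rather than in the full linear dual) rely essentially on the finite-dimensionality of each bidegree; this hypothesis is what makes the bigraded statement a direct transcription of the classical finite-dimensional one, as noted in the excerpt.
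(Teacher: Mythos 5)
Your argument is correct: the intertwining identity $(\psi\otimes\psi)\circ g^* = g^{\overline{\vee}}\circ\psi$ is exactly the definition-chase needed, and both assertions do follow from it together with the finite-dimensionality of each bidegree. The paper itself gives no written proof here (it only remarks that the finite-dimensional argument adapts to the bigraded setting), and what you have written is precisely the standard verification being alluded to, so your proposal matches the intended approach.
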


\begin{proposition}\label{Lie pairing}
 Let $C$ be a bigraded vector subspace of $B$.
\baselineskip 18pt
\begin{itemize} \item[1)]
The subspace $C$ is preserved by $g$ if and only if the space $C^\perp\subset A$ orthogonal to $C$ with respect to $(-,-)_{A,B}$ is a coideal for the adjoint $g^*:A\to A\otimes A$ of $g$ with respect to the pairings $(-,-)_{A,B}$ and $ (-,-)_{A,B}^{\otimes 2}$ . By a coideal for $g^*$ we mean $g^* (C^\perp)\subset C^\perp \otimes A+ A\otimes C^\perp$.
\item[2)] Assume that one of the equivalent conditions in $(1)$ is satisfied. The pair $(A/C^\perp,\bar{g}^*)$, where $\bar{g}^*$ is induced by $g^*$, is a bigraded Lie coalgebra and the linear map $(A/C^\perp,\bar{g}^*) \to (C^\vee, g_{\vert C}^{\overline{\vee}}), \bar{a} \to (a,-)_{A,B}$, where $a$ is any lift of $\bar{a}$ to $a$ is an isomorphism of bigraded Lie coalgebras.
\end{itemize}
\end{proposition}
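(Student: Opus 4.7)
The plan is to derive both parts from the adjointness identity
\[
(g^*(a),\,b \otimes b')_{A,B}^{\otimes 2} \;=\; (a,\,g(b \otimes b'))_{A,B},
\]
together with the bigraded identification
\[
(C \otimes C)^\perp \;=\; C^\perp \otimes A \;+\; A \otimes C^\perp \quad \text{inside } A \otimes A,
\]
where $(C\otimes C)^\perp$ is taken with respect to $(-,-)_{A,B}^{\otimes 2}$. Once this identification is in place, part (1) is a chain of equivalences and part (2) reduces to bookkeeping combined with Proposition \ref{dual adjoint of lie bracket}.

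For part (1), I would argue as follows. Perfectness of $(-,-)_{A,B}$ (together with the finite-dimensionality of each bidegree) shows that $g(C \otimes C) \subset C$ is equivalent to $(a, g(b \otimes b'))_{A,B} = 0$ for every $a \in C^\perp$ and $b,b' \in C$. By adjointness this is in turn equivalent to $g^*(C^\perp) \subset (C \otimes C)^\perp$, which via the identification above is exactly the coideal condition $g^*(C^\perp) \subset C^\perp \otimes A + A \otimes C^\perp$. The identification itself I would prove bidegree by bidegree: the inclusion $\supset$ is immediate from the definition of $(-,-)_{A,B}^{\otimes 2}$, and the inclusion $\subset$ follows by picking, in each bidegree, a basis of $A_{m,n}$ adapted to the perfect pairing with $B_{m,n}$ whose dual basis splits along $C_{m,n}$ and a chosen complement of it.

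For part (2), the coideal condition from (1) guarantees that $g^*$ descends to a bigraded linear map $\bar g^* : A/C^\perp \to (A/C^\perp) \otimes (A/C^\perp)$, and this descent automatically inherits antisymmetry and the co-Jacobi identity from $g^*$ (which is a bigraded Lie cobracket by Proposition \ref{dual adjoint of lie bracket}(1)), so $(A/C^\perp, \bar g^*)$ is a bigraded Lie coalgebra. The map $\Phi : A/C^\perp \to C^\vee$, $\bar a \mapsto (a,-)_{A,B\,\vert\, C}$, is well-defined and injective because $C^\perp$ is precisely the kernel of $a \mapsto (a,-)_{A,B\,\vert\, C}$, and is surjective in each bidegree because perfectness of $(-,-)_{A,B}$ identifies $A_{m,n}$ with the full dual of $B_{m,n}$, whence $A_{m,n}/(C^\perp)_{m,n}$ surjects onto $C_{m,n}^\vee$. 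The compatibility of $\Phi$ with the cobrackets reduces, on representatives and via Proposition \ref{dual adjoint of lie bracket}(2), to the tautological identity $(\bar g^*(\bar a), b \otimes b')_{A,B}^{\otimes 2} = (\bar a, g(b \otimes b'))_{A,B}$, which is just adjointness again.

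The main obstacle is really only bookkeeping in the bigraded setting: the tensor identity $(C \otimes C)^\perp = C^\perp \otimes A + A \otimes C^\perp$ fails for infinite-dimensional spaces without a topology, so one must argue bidegree by bidegree where the hypothesis that each $A_{m,n}$ and $B_{m,n}$ is finite-dimensional is essential; everything else is formal manipulation using the adjointness formula and the previous proposition.
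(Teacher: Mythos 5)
Your proof is correct and follows essentially the same route as the paper's: part (1) via the adjointness identity together with $(C\otimes C)^\perp=C^\perp\otimes A+A\otimes C^\perp$ (which the paper dismisses as ``a general algebra equivalence'' and you rightly justify bidegree by bidegree using finite-dimensionality), and part (2) via the induced perfect pairing between $A/C^\perp$ and $C$ combined with Proposition \ref{dual adjoint of lie bracket}. The only cosmetic difference is that you verify the Lie coalgebra axioms for $\bar g^*$ by descent from $g^*$ rather than obtaining them wholesale from the cited proposition, which changes nothing of substance.
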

\begin{proof}
Point $(1)$ follows from a general algebra equivalence adapted to our case: $g(C\otimes C)\subset C$ if and only if $g^*(C^\perp) \subset (C\otimes C)^\perp=C^\perp \otimes A+A\otimes C^\perp$. To show $(2)$, we check that $(-,-)_1: A/C^\perp\otimes C \to \K, \bar{a}\otimes c \mapsto (a,c)_{A,B}$, where $a \in A$ is any lift of $\bar{a}$, is a perfect pairing, and that $\bar{g}^*$ is the adjoint of $g_{\vert C}$ with respect to that pairing. We then conclude by applying $(2)$ of proposition \ref{dual adjoint of lie bracket}.
\end{proof}
\subsection{Series and duality} 
Let $A:=\oplus_{m \geq 0} A_m$ and $B:= \oplus_{m \geq 0} B_m$ be graded $\K$-vector spaces with finite dimensional homogeneous elements. Set $A\HT B=\prod_{m\geq 0} (A\otimes B)_m$, where the index $m$ is for degree $m$ component. For $\varphi \in B^\vee$ seen as a linear form over $B$, and $S=\sum_{m\geq 0} S_m \in A\HT B$, with $S_m \in (A\otimes B)_m$, the element $(\mathrm{id}_A\otimes \varphi)(S_m)$ of $A\otimes \K$ is nonzero for only a finite number of $m$. Hence, we can define an element $(\mathrm{id}_A\otimes \varphi )(S)\in A$ by $\sum_{m\geq 0} (\mathrm{id}_A\otimes \varphi)(S_m)$. Now consider the isomorphism $\gamma: A\otimes \K \to A,a\otimes 1 \to  a$. We define for $S\in A\HT B$ a linear map and a space: 
$$\begin{array}{cccc} L_s:& B^\vee &\to & A\\
&\varphi & \mapsto & \gamma((\mathrm{id}_A\otimes \varphi)(S))
\end{array} \qquad \text{and} \qquad A(S)=\mathrm{Im}(L_S) \subset A.$$
If $S\in D(A\HT B)= \prod_{m\geq 0} A_m\otimes B_m \subset A\HT B$ then $L_S$ and $A(S)$ are graded
\begin{remark}
When $B$ is a polynomial ring $A\HT B$ is a space of series with values in $A$. Given $S=\sum_i a_i \otimes P_i \in A\HT B $ with $\{P_i\}_i$ a free family of polynomials the space $A(S)$ is the space generated by the coefficients $\{a_i\}_i$ of $S$. In particular, the space generated by the coefficients of a series is independent of the choice of the free family $\{P_i\}_i$ used to write $S$.
\end{remark}
For $f_A$ and $f_B$ graded maps with sources $A$ and $B$ respectively, denote by $f_A\HT f_B$ the graded morphism of source $A \HT B$ mapping $\sum_{m\geq 0} c_m$, with $c_m\in (A\otimes B)_m$, to $\sum_{m\geq 0}(f_A\otimes f_B)(c_m)$. 
\begin{proposition}
Take $T:B\to B$ a graded linear map and $S\in D(A\HT B)$. The map and the space $L_{(\mathrm{id}_A\HT T)(S)}$ and $A((\mathrm{id}_A\HT T)(S))$ are graded and:
$$L_{(\mathrm{id}_A\HT T)(S)}=L_S\circ T^\vee\quad \text{and} \quad A((\mathrm{id}_A\HT T)(S))=L_S(\mathrm{Ker}(T)'),$$ with $T^\vee: B^\vee\to B^\vee$ the graded dual of $T$ and $\mathrm{Ker}(T)'$ the vector space of $\varphi \in B^\vee$ null over $\mathrm{Ker}(T)$.
\end{proposition}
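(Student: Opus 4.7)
I will treat the three claims in the order they appear. Since $T$ is graded, $\mathrm{id}_A\otimes T$ maps $A_m\otimes B_m$ to itself, so $(\mathrm{id}_A\HT T)$ preserves $D(A\HT B)$. Hence $(\mathrm{id}_A\HT T)(S)\in D(A\HT B)$, and the remark made immediately before the statement (that $L_\bullet$ and $A(\bullet)$ are graded on elements of $D(A\HT B)$) yields at once that both $L_{(\mathrm{id}_A\HT T)(S)}$ and $A((\mathrm{id}_A\HT T)(S))$ are graded.

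For the identity $L_{(\mathrm{id}_A\HT T)(S)}=L_S\circ T^\vee$, I test against an arbitrary $\varphi\in B^\vee$. Writing $S=\sum_m S_m$ with $S_m\in A_m\otimes B_m$, one has $(\mathrm{id}_A\HT T)(S)=\sum_m (\mathrm{id}_A\otimes T)(S_m)$, and termwise
$$(\mathrm{id}_A\otimes \varphi)\bigl((\mathrm{id}_A\otimes T)(S_m)\bigr) = \bigl(\mathrm{id}_A\otimes (\varphi\circ T)\bigr)(S_m) = \bigl(\mathrm{id}_A\otimes T^\vee(\varphi)\bigr)(S_m).$$
Summing over $m$ and composing with $\gamma$ yields $L_{(\mathrm{id}_A\HT T)(S)}(\varphi) = L_S(T^\vee(\varphi))$, which is the desired identity.

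Consequently $A((\mathrm{id}_A\HT T)(S))=\mathrm{Im}(L_S\circ T^\vee)=L_S(\mathrm{Im}(T^\vee))$, so the second identity reduces to showing $\mathrm{Im}(T^\vee)=\mathrm{Ker}(T)'$. The inclusion $\mathrm{Im}(T^\vee)\subset \mathrm{Ker}(T)'$ is tautological: if $\psi=\varphi\circ T$, then $\psi$ vanishes on $\mathrm{Ker}(T)$. The reverse inclusion is the only step requiring a small amount of care and I expect it to be the main (if mild) obstacle, since one must remain in the graded category. Using that $B=\oplus_m B_m$ with each $B_m$ finite dimensional and $T$ preserves the grading, $\mathrm{Ker}(T)=\oplus_m (\mathrm{Ker}(T)\cap B_m)$ admits a graded complement $C=\oplus_m C_m$ in $B$, and $T|_C:C\to \mathrm{Im}(T)$ is a graded isomorphism. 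Given $\psi\in \mathrm{Ker}(T)'$, I define a graded linear form on $\mathrm{Im}(T)$ by $\psi\circ (T|_C)^{-1}$ and extend it by zero along any chosen graded complement of $\mathrm{Im}(T)$ in $B$; the resulting $\varphi\in B^\vee$ then satisfies $T^\vee(\varphi)=\psi$, which finishes the argument.
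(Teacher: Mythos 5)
Your proof is correct and complete: the termwise computation giving $L_{(\mathrm{id}_A\HT T)(S)}=L_S\circ T^\vee$ is right, and the identification $\mathrm{Im}(T^\vee)=\mathrm{Ker}(T)'$ inside the graded dual is properly justified by choosing graded complements of $\mathrm{Ker}(T)$ and $\mathrm{Im}(T)$, which is exactly where care is needed since $B^\vee$ is the restricted (graded) dual. The paper itself leaves this proof to the reader, so there is no argument to compare against; yours is the natural one and fills the gap.
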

\begin{proof}
We leave the proof to the reader.
\end{proof}
\begin{corollary}\label{cor ker coeff}
Let $T_1,\dots, T_k$ be graded endomorphisms of $B$ and $S$ an element of $D(A\HT B)$. The graded spaces $L_S((\cap_{i=1}^k \mathrm{Ker}(T_i))')$ and $\sum_{i=1}^k A(\mathrm{id}_A\HT T_i)(S)$ are equal.
\end{corollary}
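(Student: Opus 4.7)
The plan is to reduce the statement to the preceding proposition together with a standard duality identity relating sums of annihilators to annihilators of intersections, proved degree-by-degree on finite-dimensional pieces.

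First, I would apply the preceding proposition to each $T_i$ individually. Since $S\in D(A\HT B)$ and each $T_i$ is graded, $(\mathrm{id}_A\HT T_i)(S)$ still belongs to $D(A\HT B)$, so the proposition gives $A((\mathrm{id}_A\HT T_i)(S))=L_S(\mathrm{Ker}(T_i)')$ as graded subspaces of $A$. Summing over $i$ and using linearity of $L_S$ yields
\[
\sum_{i=1}^k A((\mathrm{id}_A\HT T_i)(S))\;=\;L_S\!\Bigl(\sum_{i=1}^k \mathrm{Ker}(T_i)'\Bigr).
\]
So it suffices to prove the identity $\sum_{i=1}^k \mathrm{Ker}(T_i)'=(\cap_{i=1}^k \mathrm{Ker}(T_i))'$ inside the graded dual $B^\vee$.

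Second, I would establish this identity degree by degree. Since each $T_i$ is a graded endomorphism, each $\mathrm{Ker}(T_i)$ is a graded subspace, and in every degree $n$ the component of $\mathrm{Ker}(T_i)'$ is the annihilator of $\mathrm{Ker}(T_i)\cap B_n$ inside the finite-dimensional space $B_n^*$. Both $\sum_i \mathrm{Ker}(T_i)'$ and $(\cap_i \mathrm{Ker}(T_i))'$ are graded subspaces of $B^\vee$, so the identity reduces to the classical finite-dimensional linear algebra fact: for subspaces $V_1,\ldots,V_k$ of a finite-dimensional space $E$, $\sum_i V_i^\perp=(\cap_i V_i)^\perp$ in $E^*$. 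The inclusion $\subset$ is trivial, and the reverse inclusion follows from reflexivity $(U^\perp)^\perp=U$ applied to $U=\sum_i V_i^\perp$, which gives $U^\perp=\cap_i V_i$ and hence $U=(\cap_i V_i)^\perp$.

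Combining these two steps gives the desired equality $L_S((\cap_{i=1}^k \mathrm{Ker}(T_i))')=\sum_{i=1}^k A((\mathrm{id}_A\HT T_i)(S))$, and the graded character of both sides follows from the graded character of $L_S$ and of each $\mathrm{Ker}(T_i)'$. There is no real obstacle here; the nontrivial content has been absorbed into the preceding proposition, and what remains is a purely formal consequence of finite-dimensional duality in each graded piece.
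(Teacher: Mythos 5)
Your proof is correct and follows exactly the route the paper intends: the corollary is stated as an immediate consequence of the preceding proposition (whose proof the paper leaves to the reader), combined with the standard identity $\sum_i \mathrm{Ker}(T_i)'=(\cap_i \mathrm{Ker}(T_i))'$ checked degree by degree on the finite-dimensional components. Nothing is missing; your care in noting that each $\mathrm{Ker}(T_i)'$ is itself graded, so the annihilator identity reduces to finite-dimensional duality, is precisely the point that makes the reduction legitimate.
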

\section{The linearized double shuffle Lie algebra $\ls$ and the double shuffle space}\label{section ls}
Given a field $\K$ of characteristic zero, we recall in subsection \ref{ssection ls} the definition of the linearized double shuffle Lie algebra $\ls$ (in \cite{Brown} $\ls$ is defined over $\Q$): we introduce the bigraded vector space $\ls\subset \K \langle x,z \rangle$ and then its Lie bracket (Ihara's bracket $\{-,-\}$) originally defined over $\K \langle x,z \rangle$. In subsection \ref{def Dshm} we recall the definition of the double shuffle subspace $\Dsh_m={\oplus}_{k> m \geq 2}\mathrm{Dsh}_{m}({{k}-m})$ of the polynomial algebra $\K[x_1,\dots, x_m]$ introduced in \cite{IKZ} (for $\K=\Q$) for $m\geq2$.

\subsection{The linearized double shuffle Lie algebra $\ls$}\label{ssection ls}
In order to define $\ls$ we use two algebras: the free associative algebra $\K \langle x,z \rangle$ on two  indeterminates $x,z$ and the free associative algebra $\K\langle Y\rangle$ on the set of indetreminates $Y=\{y_i \vert i\in \N^*\}$.\\\\
We define the weight of a monomial $w\in \K \langle x,z \rangle$ as the total degree of $w$ and the depth of $w$ as the degree of $w$ with respect to the variable $z$. The algebra $\K \langle x,z \rangle$ is a bigraded algebra with respect to depth and weight. Similarly, we define the depth of  $w_Y:=y_{n_1}\cdots y_{n_m} \in \K \langle Y\rangle$ as the total degree of $w_Y$ and the weight of $w_Y$ as the number $n_1+\cdots+n_m$.\\\\
The shuffle coproducts of $\K \langle x,z \rangle$ and $ \K \langle Y \rangle$ as defined in (\ref{definition coproduct}) section \ref{Section rappel} will be denoted by $\Delta$ and $\Delta_Y$, respectively. The coproducts $\Delta$ and $\Delta_Y$ respect the bigradings defined above.\\\\
Let  $\pi : \K \langle x,z \rangle \to \K \langle Y \rangle$ be the projection of bigraded spaces, given by: 
\begin{equation}\label{projection pi}
\K\langle x,z \rangle x \mapsto 0, \quad 1\mapsto 1 \quad  \text{and} \quad x^{n_1-1}zx^{n_2-1}z\cdots x^{n_m-1}z\mapsto y_{n_1}y_{n_2}\cdots y_{n_m},\end{equation}
for $m\geq 1$ and $n_1,\dots,n_m\geq 1$.\begin{definition}[\cite{Brown}]\label{definition ls}
The linearized double shuffle space $\ls$  is the bigraded vector space (for  depth and weight) of elements $\psi \in \K \langle x,z \rangle$ such that: 
$$ \Delta(\psi)=1\otimes \psi +\psi \otimes 1\quad, \quad \Delta_Y(\pi(\psi))=1\otimes \pi(\psi)+\pi(\psi) \otimes 1, $$
and the only components of $\psi$ of depth $m\leq 1$ are of odd weight and have depth $1$. 
\end{definition}
For ${k} \geq m\geq 1$, we denote by $\ls_m^{k}$ the depth $m$ and weight $k$ component of $\ls$. We have: $$\ls= \oplus_{k\geq m \geq 1} \ls_m^{k}.$$
Let $[-,-]$ be the canonical Lie bracket of the free associative $\K  \langle x,z \rangle$, i.e $ [w,w']=ww'-w'w$, for $w,w' \in \K \langle x,z \rangle$. Given $w\in \K \langle x,z \rangle$, we denote by $d_w$ the unique derivation of $\K \langle x,z \rangle$ defined by $d_w(x)=0$ and $ d_w(z)=[z,w]$. The existence and uniqueness of $d_w$ follows from proposition \ref{unique derivation} of section \ref{Section rappel}. 
\begin{definition}\label{Ihara bracket}
The Ihara bracket is the bigraded Lie bracket $\{-,-\}: \K \langle x,z \rangle^{\otimes2}\to \K \langle x,z \rangle$ given by: 
$$ \{w,w'\}= d_w(w')-d_{w'}(w)+[w,w'].$$
for $w,w' \in  \K \langle x,z \rangle$.
\end{definition}
It follows form theorem 3.4.3 of \cite{Schnepsari} and its proof that:
\begin{theorem}[\cite{Brown},\cite{Schnepsari}]\label{Ihara bracket old proofs}
The linearized double shuffle space $\ls$ is preserved by the Ihara bracket.
\end{theorem}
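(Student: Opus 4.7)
My plan is to verify, for $w, w' \in \ls$, that $\{w,w'\}$ satisfies each defining condition of $\ls$ in turn: shuffle primitivity, stuffle primitivity via $\pi$, and the low-depth odd-weight condition. The last is immediate, since $d_w(x)=0$ and $d_w(z)=[z,w]$ has depth $1+\mathrm{depth}(w)$, so the operation $d_w$ raises depth by exactly $\mathrm{depth}(w)$; likewise $[w,-]$ is depth-additive. Hence $\{w,w'\}$ has depth $\mathrm{depth}(w)+\mathrm{depth}(w')\geq 2$, and the depth-$\leq 1$ odd-weight requirement holds vacuously.

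For shuffle primitivity, I would introduce the Ihara derivation $\mathcal{D}_w(u):=d_w(u)+[w,u]$, which is a derivation for the concatenation product with $\mathcal{D}_w(x)=[w,x]$ and $\mathcal{D}_w(z)=0$. Using the multiplicativity of $\Delta$ one checks that $[w,x]$ is $\Delta$-primitive whenever $w$ is, hence $\mathcal{D}_w$ sends $\Delta$-primitive elements to $\Delta$-primitive elements. Rewriting the Ihara bracket as $\{w,w'\}=\mathcal{D}_w(w')-\mathcal{D}_{w'}(w)-[w,w']$ and using that primitives form a Lie subalgebra under $[-,-]$, each term is $\Delta$-primitive, and therefore so is $\{w,w'\}$.

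Stuffle primitivity via $\pi$ is the main obstacle, since the projection $\pi$ commutes with neither $d_w$ nor $[w,-]$ individually: $\pi$ kills monomials ending in $x$, while $d_w$ and $[w,-]$ freely create and destroy $x$-letters at word boundaries. The plan is to build a ``harmonic'' analogue $\mathcal{D}^Y_{\pi(w)}$ of the Ihara operation on $\K\langle Y\rangle$ and to prove a commutation identity $\pi\circ\mathcal{D}_w=\mathcal{D}^Y_{\pi(w)}\circ\pi$ valid on the subspace of $w$ satisfying both shuffle- and stuffle-primitivity. Combined with the statement that $\mathcal{D}^Y_{\pi(w)}$ preserves $\Delta_Y$-primitivity (proved by the same derivation/coderivation argument used on the shuffle side, after identifying the action of $\mathcal{D}^Y_{\pi(w)}$ on the generators $y_i$), this yields $\Delta_Y$-primitivity of $\pi(\{w,w'\})$. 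The commutation identity is the genuinely delicate combinatorial step: Schneps derives it by translating to the bimould formalism in \cite{Schnepsari}, while Racinet's original treatment for the full $\dmr$ algebra specialises to the linearized case by restricting to the depth-graded pieces; either adaptation would suffice here, and no shorter route seems to avoid the core identity.
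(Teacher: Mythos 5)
Your reduction of the statement to the three defining conditions of $\ls$ is sound, and two of them are handled correctly: the depth condition is indeed vacuous because the Ihara bracket is bigraded for depth and every component of an element of $\ls$ has depth at least $1$; and the shuffle-primitivity argument via $\mathcal{D}_w:=d_w+[w,-]$ (a concatenation derivation with $\mathcal{D}_w(x)=[w,x]$, $\mathcal{D}_w(z)=0$, which is a coderivation for $\Delta$ precisely when $[w,x]$ is primitive, i.e.\ when $w$ is, together with the identity $\{w,w'\}=\mathcal{D}_w(w')-\mathcal{D}_{w'}(w)-[w,w']$) is correct and standard. The genuine gap is the third step, which is the entire content of the theorem. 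You do not prove stuffle primitivity: the operator $\mathcal{D}^Y_{\pi(w)}$ is never defined, the commutation identity $\pi\circ\mathcal{D}_w=\mathcal{D}^Y_{\pi(w)}\circ\pi$ is never established, and no such identity holds in a naive form --- $\pi$ annihilates every monomial ending in $x$ while $\mathcal{D}_w$ freely creates such monomials, and the correct statement (Racinet's, for $\dmr$) carries nontrivial correction terms and uses the full strength of both defining equations of its input. Conceding that ``either adaptation would suffice'' and that no shorter route avoids the core identity amounts to citing \cite{Racinet} and \cite{Schnepsari} for exactly the step that makes the theorem nontrivial, so the proposal is not a proof.

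For comparison, the paper does not prove this statement directly either: it records it as a consequence of theorem 3.4.3 of \cite{Schnepsari}, whose proof passes through \'Ecalle's bimould formalism and the stability of $\mathrm{ARI}^{\mathrm{pol}}_{\underline{al},\underline{al}}$ under the $\textit{ari}$-bracket. The paper's own contribution is the equivalence $(C1)\Leftrightarrow(C4)$ of theorem \ref{equiv C}, which, combined with Goncharov's theorem that $\D$ is a Lie coalgebra, yields a second and independent derivation (corollary \ref{cor th}). If you want to avoid the mould computation, that equivalence is the available route; a direct verification along the lines you sketch would still require proving the corrected commutation identity, which is precisely the hard combinatorial core of \cite{Racinet} and \cite{Schnepsari}.
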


\subsection{The double shuffle space $\Dsh_m$}\label{def Dshm}
For $m \geq 2$ and $\sigma \in \mathfrak{S}_m$ we denote by $S_m$ and $T_\sigma$ the automorphisms of the polynomial ring $\K[x_1,\dots,x_m]$ given by: $$ S_m(x_i)= x_i+\cdots  +x_m, \quad T_\sigma(x_i)=x_{\sigma^{-1}(i)}, \quad \text{for $i\in[1,m]$}$$
 and we define for $l\in[1,m-1]$ the linear endomorphisms $T_{m,*}^{(l)}$ and $T_{m,\sh}^{(l)}$ of $\K[x_1,\cdots,x_m]$ by the following:
$$T_{m,*}^{(l)}= \underset{\sigma \in S(l,m-l)}{\sum}{T_\sigma},\quad T_{m,\sh}^{(l)}=T_{m,*}^{(l)} \circ S_m,$$
where $S(p,q)$ denotes the set of $(p,q)$-shuffles, as in the previous sections.
\begin{definition}[\cite{IKZ}]
\baselineskip 18pt
\begin{itemize}
\baselineskip 18pt
\item[1)] The double shuffle subspace $\Dsh_m$ of $\K[x_1,\dots,x_m]$ (for $m\geq 2$) is the intersection of the kernels of the endomorphisms $T_{m,*}^{(l)}$ and $T_{m,\sh}^{(l)}$ for $l\in[1,m-1]$. 
\item[2)] For $m\geq 2$ and $d\geq1$, the double shuffle space $\Dsh_m(d)$ is the vector subspace of polynomials of total degree $d$ lying in $\Dsh_m$.
\end{itemize}
\end{definition}
The maps $T_{m,*}^{(l)}$ and $T_{m,\sh}^{(l)}$ respect the total degree grading of $\K[x_1,\dots, x_m]$. Hence, the total degree induces a grading of $\Dsh_m$. Here, we endow $\Dsh_m$ with a weight grading corresponding to a shift by $+m$ of the total degree grading:
\begin{definition}\label{Dsh grading} 
The weight grading of $\Dsh_m$ is given by:
$$\Dsh_m= {\oplus}_{k\geq m} \Dsh_m^k ,$$ 
where the weight $k$ part $\Dsh_m^k$ of $\Dsh_m$ is the double shuffle space $\Dsh_m(k-m)$. 
\end{definition}

\section{The dihedral Lie coalgebra $\D$}\label{diheral coalgebra}
Given a field $\K$ of characteristic zero, we introduce a free bigraded vector space $W$ and  generating series similar to those in \cite{Gon}. The series are used to define a bigraded subspace $W_R \subset W$. It is shown in proposition \ref{definition dihedral} that we have a natural isomorphism of bigraded spaces $\eta: \D \to W/W_R$. A map $\tilde{\delta}: W \to W\otimes W$ inducing a bigraded Lie cobracket $\delta$ on $W/W_R $ for which $\eta:\D \to W/W_R$ is an isomorphism of bigraded Lie coalgebra (theorem \ref{Lie structure dihedral}), is defined. The construction of $W/W_R$ and $\delta$ is a transcription of the work of \cite{Gon}. We end the section by introducing extensions $V$ and $F$ of $W$ and $W_R$ by a bigraded free vector space $U$. The isomorphic spaces $V/F$ and $W/W_R$ will be used in the next sections.\\\\
For $k\geq m \geq 1$, let $W_{m,k}$ be the free $\K$-vector space with basis the elements $I(n_1,\dots, n_m)$ where $n_i\in \N^*$ and $n_1+\cdots +n_m=k$. We say that $I(n_1,\dots,n_m)$ is of depth $m$ and weight $k$. Let $W$ be the depth-weight bigraded space:
$$ W=\oplus_{k\geq m \geq 1} W_{m,k},$$
with  depth $m$ and weight $k$ component $W_{m,k}$.\\\\
For $m\geq 1$, we define the multivariable series with values in $W$: 
$$ \{ t_1:\cdots : t_m :t_{m+1}\}= \underset{n_1,\dots,n_m\geq 1}{\sum}I(n_1,\dots,n_m) (t_1-t_{m+1})^{n_1-1} \cdots (t_m-t_{m+1})^{n_m-1}$$
$$\text{and} \quad \{t_1,\dots, t_m\}=\{t_1:t_1+t_2: t_1+t_2+t_3 : \cdots :t_1+t_2+\dots+t_m: 0\}.$$
These series are similar to those used by Goncharov (see $(54)$ and $(56)$ in \cite{Gon} for $G=\{e\})$.\\\\
Let $\Wst$ be the subspace of $W$ generated by the coefficients of the series: 
\begin{equation*}
\underset{\sigma \in S(p,q)}{\sum} \{ t_{\sigma^{-1}(1)} : \cdots : t_{\sigma^{-1}(p+q)}: t_{p+q+1}\},\end{equation*}
for all $(p,q) \in (\N^*)^2$, and $\Wsh$ the subspace of $W$ generated by the coefficients of the series: 
\begin{equation*}
\underset{\sigma \in S(p,q)}{\sum} \{ t_{\sigma^{-1}(1)} , \dots , t_{\sigma^{-1}(p+q)}\},
\end{equation*}
for $(p,q) \in (\N^*)^2$.\\\\
We denote by $W_{1,even}$ the subspace of $W$ generated by the elements $I(2n)$ for $n\geq 1$. The spaces $\Wsh,\Wst,$ and $W_{1,even}$ are bigraded subspaces of $W$.

\begin{proposition}\label{definition dihedral}
Set $W_R=\Wst+\Wsh+W_{1,even}$ where $\Wst, \Wsh$ and $W_{1,even}$ are as in the previous paragraph. We have a natural isomorphism of bigraded vector spaces $\eta :  \D  \to W/W_R$. The isomorphism $\eta$ is constructed in the proof below (see (\ref{eta})). 
\end{proposition}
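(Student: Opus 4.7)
The plan is to unpack the presentation of $\D=\DGHE$ given by Goncharov in \cite{Gon} for $G=\{e\}$ and match it, term by term, with the presentation $W/W_R$ introduced above. Concretely, I would first recall that $\D$ is, as a bigraded vector space, the quotient of the free bigraded $\K$-vector space on generators $\mathcal{I}(n_1,\ldots,n_m)$ (one per composition $(n_1,\ldots,n_m)$ of positive integers) by three families of relations: two families of shuffle-type relations (call them the $\star$-shuffle and $\sh$-shuffle relations) and a parity relation $\mathcal{I}(2n)=0$ in depth one. In Goncharov's formulation, these relations are exactly the vanishing of the coefficients of two generating series which, up to notation, coincide with the series $\sum_{\sigma \in S(p,q)} \{ t_{\sigma^{-1}(1)}:\cdots :t_{\sigma^{-1}(p+q)}:t_{p+q+1}\}$ and $\sum_{\sigma \in S(p,q)} \{ t_{\sigma^{-1}(1)},\ldots,t_{\sigma^{-1}(p+q)}\}$ used here to define $\Wst$ and $\Wsh$.

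Next, I would define $\eta$ as follows. Let $\tilde\eta$ be the unique bigraded linear map from the free vector space on Goncharov's generators to $W$ sending $\mathcal{I}(n_1,\ldots,n_m)$ to $I(n_1,\ldots,n_m)$; this is visibly bigraded since both sides have depth $m$ and weight $n_1+\cdots+n_m$. To show that $\tilde\eta$ descends to a map $\eta:\D\to W/W_R$, I would extract the coefficient of $(t_1-t_{p+q+1})^{a_1-1}\cdots(t_{p+q}-t_{p+q+1})^{a_{p+q}-1}$ from the $\star$-shuffle series, observe that it produces Goncharov's $\star$-shuffle relation on $\D$ (and hence its image under $\tilde\eta$ lies in $\Wst$), and treat the $\sh$-shuffle case identically after applying the change of variables $t_i\mapsto t_1+\cdots+t_i$. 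The depth-one parity relation is handled directly, since $\mathcal{I}(2n)\mapsto I(2n)\in W_{1,even}$. This gives a well-defined bigraded linear map $\eta:\D\to W/W_R$.

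To show $\eta$ is an isomorphism, I would build an inverse by the symmetric construction: $I(n_1,\ldots,n_m)\mapsto\overline{\mathcal{I}(n_1,\ldots,n_m)}\in\D$ extends by freeness of $W$ to a bigraded linear map $W\to\D$, and by the same coefficient-extraction argument the generators of $\Wst$, $\Wsh$, $W_{1,even}$ all map to Goncharov's defining relations in $\D$, hence to $0$. This descends to $\eta^{-1}:W/W_R\to\D$, and $\eta$, $\eta^{-1}$ are mutually inverse on generators by construction.

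The main (and only) obstacle is purely one of bookkeeping: pinning down the precise dictionary between Goncharov's generating series (formulas (54)–(56) of \cite{Gon} for $G=\{e\}$) and the series introduced here, and verifying that the "projective" expansion in $t_i-t_{m+1}$ used in $\{t_1:\cdots:t_m:t_{m+1}\}$ recovers the indexation of Goncharov's generators, with the substitution $t_i\mapsto t_1+\cdots+t_i$ relating the two shuffle relations. Once this dictionary is fixed, the equivalence of the two presentations is tautological, and the proposition follows.
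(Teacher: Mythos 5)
Your overall strategy --- match Goncharov's presentation of $\D=\DGHE$ with the presentation $W/W_R$ generator by generator and relation by relation --- is the same as the paper's. The gap is in your opening step, where you ``recall'' that $\D$ is the quotient of the free space on the $\mathcal{I}(n_1,\dots,n_m)$ by exactly three families of relations: the two shuffle families and the depth-one parity relation $\mathcal{I}(2n)=0$. That is not Goncharov's definition; it is the \emph{conclusion} of the only nontrivial part of the argument. As defined in \cite{Gon}, $\DGH$ is cut out by the double shuffle relations, the homogeneity relations, \emph{and} the distribution relations in every depth $m\geq 1$. For $G=\{e\}$ the homogeneity relations are vacuous, but the distribution relations for $l=-1$ survive in all depths, and for $m\geq 2$ they are not among the defining relations of $W/W_R$. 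Since these are relations imposed on $\D$ but not (a priori) contained in $W_R$, it is precisely the well-definedness of your map $\eta:\D\to W/W_R$ (equivalently, the injectivity of your proposed inverse $W/W_R\to\D$) that breaks down at this point: you must show that the images under $\tilde\eta$ of the depth-$\geq 2$ distribution relations lie in $W_R$.

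The paper closes this gap by observing that for $m\geq 2$ the distribution relations at $l=-1$ coincide with the inversion relations (part of the dihedral symmetry), and then invoking Theorem 4.1 of \cite{Gon}, which shows that the inversion relations are consequences of the double shuffle relations; only the $m=1$ distribution relation remains, and it is exactly the parity relation accounted for by $W_{1,even}$. You need this citation (or an equivalent direct verification) to make your first paragraph true. The remaining bookkeeping in your proposal --- coefficient extraction from the two generating series, the substitution $t_i\mapsto t_1+\cdots+t_i$ relating the two shuffle families, and the construction of the inverse by freeness of $W$ --- is fine and matches the paper. One cosmetic remark: the paper sends $I_{n_1,\dots,n_m}(e,\dots,e)$ to $-I(n_1,\dots,n_m)$ rather than $+I(n_1,\dots,n_m)$; for this proposition (an isomorphism of bigraded vector spaces) the sign is irrelevant, but it is chosen so that the transcription of Goncharov's cobracket formula in Theorem \ref{Lie structure dihedral} comes out on the nose.
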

\begin{proof}
In \cite{Gon}, Goncharov introduces a space  $\DGH$ (see page 430 and the space $\DG$ section 4) by generators and relations. For $G=\{e\}$, the generators are the symbols $I_{n_1,\dots,n_m}(e,\dots,e)$ for $m\geq 1$ and $(n_1,\dots,n_m) \in (\N^*)^m$ and the relations can be reduced to: (i) the double shuffle relations and (ii) the distribution relations for $m\geq 1$ and $l=-1$ (following the notations of Goncharov), since the homogeneity relations are empty. 
 It follows from theorem 4.1 section 4.2 of \cite{Gon} that the double shuffle relations imply the inversion relations (a part of the diherdal symmetry relations, see formula (66) section 4.2  \cite{Gon}) for $m\geq 2$, and these relations are exactly the distribution relations for $m\geq 2$ and $l=-1$. Hence, the only relations in $\DGHE$ are the double shuffle relations and the distribution relation for $m=1$ and $l=-1$, and these relations can be readily identified to $W_R$ via the correspondence $I_{n_1,\dots,n_m}(e,\dots,e) \mapsto -I(n_1,\dots,n_m)$. This shows that we have an isomorphism: 
\begin{align}\label{eta}
\begin{split}
\eta: & \widehat{\mathscr{D}}_{\bullet \bullet}(\{e\})=\D \quad \to  \quad \: W/W_R \\
& I_{n_1,\dots,n_m}(e,\dots,e) \to -I(n_1,\dots,n_m).
\end{split}
\end{align}
Since $ I_{n_1,\dots,n_m}(e,\dots,e)$ and $I(n_1,\dots,n_m)$ have the same depth and same weight, $\eta$ is bigraded.
\end{proof}
It follows from theorem 4.1 of \cite{Gon} and the fact that  $W_{1,even}$ is a subset of $W_R$ that: 
\begin{proposition} \label{cyclic rmk}
For $m\geq 1$, the cyclic symmetry relation: 
$$ \{t_1:\cdots:t_{m+1} \} = \{t_{m+1}:t_1\cdots:t_m\}, $$ 
is satisfied in $W/W_R$.
\end{proposition}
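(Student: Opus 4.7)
The plan is to transport the cyclic symmetry relation through the isomorphism $\eta: \D \to W/W_R$ of Proposition \ref{definition dihedral}. Goncharov's Theorem 4.1 (Section 4.2 of \cite{Gon}) asserts that inside $\DGHE$ the dihedral symmetry relations, in particular cyclic symmetry, hold as a consequence of the double shuffle relations (for $m\geq 2$) together with the distribution relation for $l=-1$. Those are precisely the relations defining $W_R$: the double shuffle relations give $\Wst$ and $\Wsh$, and the distribution relation for $m=1,\,l=-1$ corresponds to $W_{1,even}$, as identified in the proof of Proposition \ref{definition dihedral}. So the first step is simply to invoke Goncharov's theorem to obtain cyclic symmetry as an identity in $\D$.

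The second step is to push this identity along $\eta$. Since $\eta$ sends $I_{n_1,\dots,n_m}(e,\dots,e)$ to $-I(n_1,\dots,n_m)$, Goncharov's generating series $\{t_1:\cdots:t_{m+1}\}(e,\dots,e)$ (formula (54) of \cite{Gon}) is carried to the negative of the series $\{t_1:\cdots:t_{m+1}\}$ used here. The global scalar $-1$ is insensitive to cyclic permutation of the $t_i$'s, so the cyclic symmetry identity in $\D$ pulls back to the desired equality $\{t_1:\cdots:t_{m+1}\} = \{t_{m+1}:t_1:\cdots:t_m\}$ in $W/W_R$.

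As a sanity check in the depth-one case, where cyclic symmetry reads $\{t_1:t_2\} = \{t_2:t_1\}$, I would also record the direct computation $\{t_1:t_2\} - \{t_2:t_1\} = \sum_{n\geq 1} I(n)\bigl((t_1-t_2)^{n-1} - (t_2-t_1)^{n-1}\bigr) = 2\sum_{k\geq 1} I(2k)(t_1-t_2)^{2k-1}$, whose coefficients lie in $W_{1,even}\subset W_R$; this makes transparent the necessity of including $W_{1,even}$ in $W_R$ and, on Goncharov's side, of retaining the distribution relation for $m=1,\,l=-1$. The only mild obstacle I anticipate is verifying, index by index, that Goncharov's formulation of the dihedral cyclic relation on his generating series agrees with the cyclic permutation of the $t_i$'s used here; but this matching is already forced by the construction of $\eta$ carried out in Proposition \ref{definition dihedral} together with Goncharov's formulas (54) and (56), so no new work is required.
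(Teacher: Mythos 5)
Your proposal is correct and follows essentially the same route as the paper, which likewise derives the cyclic symmetry from Theorem 4.1 of \cite{Gon} combined with the inclusion $W_{1,even}\subset W_R$ (needed exactly for the depth-one case you compute explicitly). The transport along $\eta$ and the observation that the overall sign $-1$ is immaterial are implicit in the paper's one-line justification, so your write-up adds detail but no new ideas.
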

For $A$ a vector space $m\geq 1$ and $f$ an element of $A[[t_1,\dots,t_{m+1}]]$ (the space of series in $t_1,\dots, t_{m+1}$ with values in $A$), we define $\text{Cycle}_{m+1}(f)\in A[[t_1,\dots,t_{m+1}]]$ by:
 $$\text{Cycle}_{m+1}(f(t_1,\dots,t_{m+1}))=\overset{m}{\underset{j=0}{\sum}} f(t_{1+j},\cdots,t_{m+1+j})$$ where the indices of the variables $t_{1+k},\cdots,t_{m+1+k}$ are taken modulo $m+1$.\\\\
Let $\tilde{\delta}:W \to W\otimes W$ be the linear map given by:
\begin{equation}\label{Lie cobracket1}
\tilde{\delta}(\{t_1:\cdots:t_{m+1}\})=\sum_{k=2}^{m} \text{Cycle}_{m+1} (\{t_1 :\cdots: t_{k-1} : t_{m+1}\}\wedge \{t_k:\cdots: t_{m+1}\}),
\end{equation} 
for $m\geq 1$ (where $a\wedge b= a\otimes b- b \otimes a$).\\\\
To define the Lie cobracket of $\D$, Goncharov introduces a formula (formula $(80)$, \cite{Gon}) and then shows ($(b)$ of theorem 4.3 of \cite{Gon}) that his formula defines a Lie cobracket over $\D$. The formula defining $\tilde{\delta}$ here, is a transcription of $(80)$ of \cite{Gon} via the correspondence $I_{n_1,\dots,n_m}(e,\dots,e) \mapsto -I(n_1,\dots,n_m)$ inducing the isomorphism $\eta: \D\to W/W_R$ of proposition \ref{definition dihedral}. We deduce from this discussion that:
\begin{remark}\label{rmk DW}
The space $W_R$ is a coideal for $\tilde{\delta}$ if and only if formula $(80)$ of \cite{Gon} defines a Lie cobracket over $\D$ $((b)$ of theorem 4.3 of \cite{Gon}$)$
\end{remark}
and from theorem 4.3 of \cite{Gon} that:
\begin{theorem}\label{Lie structure dihedral}
\baselineskip 18pt
\begin{itemize}
\item[1)] The map $\tilde{\delta}$ induces a map $\delta:  W/W_R \to W/W_R \otimes W/W_R $, providing a bigraded Lie coalgebra structure on $W/W_R$. 
\item[2)] The isomorphism of bigraded spaces $\eta : \D \to W/W_R$ (see proposition \ref{definition dihedral} and formula (\ref{eta}) of its proof)  is an isomorphism of Lie coalgebra with $W/W_R$ equipped with $\delta$ and $\D$ equipped with its Lie cobracket defined in \cite{Gon}.
\end{itemize}
\end{theorem}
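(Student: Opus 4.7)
The plan is to transport Goncharov's Lie coalgebra structure on $\D$, established in Theorem 4.3 of \cite{Gon}, across the bigraded isomorphism $\eta$ of Proposition \ref{definition dihedral}, and to identify the resulting cobracket on $W/W_R$ with the one induced by $\tilde{\delta}$. The key observation is that, by construction, $\tilde{\delta}$ is nothing but the transcription via the dictionary $I_{n_1,\dots,n_m}(e,\dots,e) \mapsto -I(n_1,\dots,n_m)$ of Goncharov's defining formula $(80)$ for his cobracket on $\D$. Consequently all the hard work (coantisymmetry, co-Jacobi, and the coideal property) is already done in \cite{Gon}, and the content of the present theorem is to verify that the transcription is faithful and bigraded.

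First I would observe that $\tilde{\delta}$ is itself bigraded: inspecting \eqref{Lie cobracket1}, a homogeneous element $I(n_1,\dots,n_m) \in W_{m,k}$ is sent to a sum of tensors of the form $I(a_1,\dots,a_p) \otimes I(b_1,\dots,b_q)$ with $p+q = m$ and $\sum a_i + \sum b_j = k$, so $\tilde{\delta}$ preserves depth and weight. Since $\eta$ is a bigraded isomorphism, any Lie coalgebra structure transported from $\D$ will automatically be bigraded.

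Next I would invoke Remark \ref{rmk DW}: because part $(b)$ of Theorem 4.3 of \cite{Gon} asserts that formula $(80)$ descends to a well-defined map $\delta_G : \D \to \D \otimes \D$, the transcription dictionary immediately gives $\tilde{\delta}(W_R) \subset W_R \otimes W + W \otimes W_R$, i.e.\ $W_R$ is a coideal for $\tilde{\delta}$. Hence $\tilde{\delta}$ descends to a linear map $\delta : W/W_R \to W/W_R \otimes W/W_R$, and by the very way $\tilde{\delta}$ was defined, the square
\[
\begin{array}{ccc}
\D & \xrightarrow{\;\delta_G\;} & \D \otimes \D \\[2pt]
{\scriptstyle \eta}\big\downarrow & & \big\downarrow {\scriptstyle \eta \otimes \eta} \\[2pt]
W/W_R & \xrightarrow{\;\delta\;} & W/W_R \otimes W/W_R
\end{array}
\]
commutes. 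Coantisymmetry and the co-Jacobi identity for $\delta$ are then inherited from the corresponding properties of $\delta_G$, which are supplied by Theorem 4.3 of \cite{Gon}; this proves $(1)$, and together with commutativity of the square proves $(2)$.

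The one genuine point I would check carefully, and which I expect to be the only obstacle, is the assertion that \eqref{Lie cobracket1} is indeed the literal transcription of Goncharov's formula $(80)$ under $I_{n_1,\dots,n_m}(e,\dots,e) \mapsto -I(n_1,\dots,n_m)$. This is combinatorial bookkeeping: one has to match the cyclic symmetrization $\mathrm{Cycle}_{m+1}$ and the wedge $a \wedge b = a \otimes b - b \otimes a$ of \eqref{Lie cobracket1} with the corresponding cyclic sum and antisymmetrization in $(80)$, and to confirm that the two factors of $-1$ coming from the dictionary on either side of $\wedge$ cancel and do not produce a spurious sign. Once this identification is verified on generating series, everything else propagates automatically from \cite{Gon} through $\eta$.
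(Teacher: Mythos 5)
Your proposal is correct and follows essentially the same route as the paper, which likewise deduces both parts directly from Theorem 4.3 of \cite{Gon} by observing that $\tilde{\delta}$ is the transcription of Goncharov's formula $(80)$ under the dictionary $I_{n_1,\dots,n_m}(e,\dots,e)\mapsto -I(n_1,\dots,n_m)$ defining $\eta$ (the paper in fact gives no further argument beyond this observation and Remark \ref{rmk DW}). Your explicit flagging of the sign bookkeeping in the transcription is the only point the paper leaves implicit, and it is the right thing to check.
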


For $k\geq m \geq 0$ and $(k,m)\geq 0$, we denote by $U_{m,k}$ the free $\K$ vector space with basis the elements $I'(n_0,n_1,\dots, n_m)$ with $n_i\geq1$ and $n_0+\cdots+n_m=k$, and denote by $U_{0,0}$ the line spanned by $I'(0)$. We say that $I'(n_0,\dots,n_m)$ is of depth $m$ and weight $n_0+\cdots+n_m$ and we denote by $U$ be the depth-weight bigraded space: $$U=\oplus_{k\geq m \geq 0} U_{m,k},$$
with  depth $m$ and weight $k$ component $U_{m,k}$. The extensions $V$ and $F$ ($F\subset V$) of $W$ and $W_R$ are the bigraded spaces for depth and weight:
 $$V=W\oplus U=\oplus_{k\geq m \geq 0} V_{m,k} \quad \text{and} \quad F=W_R\oplus U, $$
where $V_{m,k}=W_{m,k}\oplus U_{m,k}$, with the convention $W_{0,0}=0$. 

\begin{remark}\label{rmk V/F} The natural inclusion $i:W\to V$ induces an isomorphism of bigraded spaces $\bar{i} : W/W_R\to V/F$ and $\bar{i}\circ \eta  : \D \to V/F$ is an isomorphism of bigraded spaces.
\end{remark}

\section{Series with values in the free associative algebra on two indeterminates}\label{Section series}
We fix a field $\K$ of characteristic zero. We define in formula (\ref{Definition Qseries}), for $n\geq 1$, a generating series $Q_n$, in $n$ commuting variables, for the unitary monomials of $\K\langle x,z \rangle$ of depth $n$. We prove (proposition \ref{shuffle Qseries}) a formula describing the shuffle product of two series $Q_n$, $Q_m$ as a sum of similar series. This formula will be used in section \ref{section duality} and is a formal analogue of formula (15) of \cite{Gon1} for generating series of iterated integrals in the case where the iterated integrals represent MZVs.\\\\ 
For $t$ a formal variable we set $\frac{1}{1-xt}:= \underset{n\geq 0}{\sum} x^nt^n$. For $n\geq 0$ we define the series $Q_n(t_{1}, \dots,t_{n})$ in the commuting variables $t_1,\dots,t_n$ with values in $\K\langle x,z\rangle$, by:
\begin{equation}\label{Definition Qseries} Q_0:=1 \quad \text{and}\quad  Q_n(t_{1}, \dots,t_{n}):=\frac{1}{1-xu_n}z \frac{1}{1-xu_{n-1}}z \cdots \frac{1}{1-x u_1}z, \text{ for $n\geq 1$}
\end{equation}
with $u_k=t_{1}+\cdots+t_{k}$ for $k\in[1,n]$. We denote by $\sh$ and $\Delta_\sh$ the shuffle product and the deconcatenation coproduct of $\K\langle x,z\rangle$ respectively.

\begin{lemma}\label{Q series lemma}
Let $t$ and $t'$ be two commuting variables and $Q$ a series with values in $\K\langle x,z\rangle$.
$$\frac{1}{1-xt} \sh (\frac{1}{1-xt'}z Q)= \frac{1}{1-x(t+t')}z(\frac{1}{1-xt} \sh Q)$$

\end{lemma}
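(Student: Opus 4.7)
The plan is to view the identity as a fixed-point equation for the left-hand side. First I would split both factors of the shuffle using the trivial identities $\frac{1}{1-xt}=1+xt\cdot\frac{1}{1-xt}$ and $\frac{1}{1-xt'}zQ = zQ + xt'\cdot\frac{1}{1-xt'}zQ$. Then bilinearity of $\sh$ expands the LHS into four summands. Three of them still contain a shuffle, so I apply the \emph{left}-recursion version of formula (\ref{shuffle product12}), namely $x_{a_1}w_1 \sh x_{a_2}w_2 = x_{a_1}(w_1 \sh x_{a_2}w_2) + x_{a_2}(x_{a_1}w_1 \sh w_2)$, which follows from (\ref{shuffle product12}) by commutativity of $\sh$. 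This isolates the leading letter ($x$ or $z$) of every resulting summand.

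Next I would sort the summands by that leading letter. The $z$-headed terms are exactly $zQ + z(xt\frac{1}{1-xt}\sh Q)$, which collapse via $1+xt\frac{1}{1-xt} = \frac{1}{1-xt}$ to $z(\frac{1}{1-xt}\sh Q)$. The $x$-headed terms split naturally into an $xt$-part and an $xt'$-part; in each part, running the splitting identities in reverse re-assembles $\frac{1}{1-xt}\sh\frac{1}{1-xt'}zQ$, so their sum is $x(t+t')\cdot\mathrm{LHS}$. One therefore obtains
$$\mathrm{LHS} \;=\; z\!\left(\tfrac{1}{1-xt}\sh Q\right) + x(t+t')\cdot\mathrm{LHS},$$
and solving this by $(1-x(t+t'))^{-1}$ on the left produces the desired equality.

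The main obstacle is the bookkeeping in the step where the six $x$-headed terms must be regrouped as $x(t+t')\cdot\mathrm{LHS}$: one has to recognize that the $xt$-bracket contains $(1+xt\frac{1}{1-xt})\sh \frac{1}{1-xt'}zQ$ and the $xt'$-bracket contains $\frac{1}{1-xt}\sh(zQ + xt'\frac{1}{1-xt'}zQ)$, and then fold the constant-plus-$x$-series back into the geometric series. Everything else is routine manipulation, and the inversion of $1-x(t+t')$ at the end is legitimate because the series involved have no constant term in $t+t'$ mixed with $x$, so the geometric expansion converges in the formal-series sense.
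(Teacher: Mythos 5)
Your argument is correct, but it takes a genuinely different route from the paper's. The paper proves the lemma in essentially one step: it uses the identity $a\sh bzc=\sum_{a_1a_2=a}(a_1\sh b)z(a_2\sh c)$ together with the fact that $\tfrac{1}{1-xt}$ is grouplike for the deconcatenation coproduct, which factors the left-hand side directly as $\bigl(\tfrac{1}{1-xt}\sh\tfrac{1}{1-xt'}\bigr)z\bigl(\tfrac{1}{1-xt}\sh Q\bigr)$, and then checks separately that $\tfrac{1}{1-xt}\sh\tfrac{1}{1-xt'}=\tfrac{1}{1-x(t+t')}$. You instead derive the fixed-point equation $\mathrm{LHS}=z\bigl(\tfrac{1}{1-xt}\sh Q\bigr)+x(t+t')\,\mathrm{LHS}$ from the first-letter recursion for $\sh$ and solve it; this is more elementary (only the basic shuffle recursion is used) and produces the factor $\tfrac{1}{1-x(t+t')}$ automatically rather than by a separate computation, at the price of more bookkeeping. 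Three small points to tighten. First, the left recursion $x_{a_1}w_1\sh x_{a_2}w_2=x_{a_1}(w_1\sh x_{a_2}w_2)+x_{a_2}(x_{a_1}w_1\sh w_2)$ does not follow from (\ref{shuffle product12}) by commutativity alone (that would require the reversal anti-automorphism); it is cleanest to derive it directly from (\ref{shuffle product}) by splitting $S(p,q)$ according to whether $\sigma^{-1}(1)$ equals $1$ or $p+1$, exactly as the paper does in the proof of Proposition \ref{shuffle Qseries}. Second, uniqueness of the solution of your fixed-point equation should be stated as an induction on the total degree in $(t,t')$, which works because $x(t+t')$ has no constant term in those variables; your closing remark gestures at this and is essentially right. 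Third, the expansion yields four $x$-headed terms rather than six, and your two brackets are interchanged relative to the natural grouping by leading coefficient $t$ versus $t'$; since both brackets equal the left-hand side, this does not affect the conclusion.
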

\begin{proof} 
One can check that for $a,b,c$ unitary monomials of $\K\langle x,z\rangle$, we have: 
$$a\sh bzc= \underset{a_1a_2=a}{\sum} (a_1\sh b)z(a_2\sh c), \qquad(A)$$
with $a_1$ and $a_1$ unitary monomials. Note that $\Delta_{\sh}(a)= {\sum}_{a_1a_2=a}a_1 \otimes a_2$, with $a_1$ and $a_1$ unitary monomials.
A direct computation shows that $\Delta_{\sh}(\frac{1}{1-xt})=\frac{1}{1-xt}\otimes \frac{1}{1-xt}$. Therefore, we find by applying $(A)$ that: 
$$ \frac{1}{1-xt} \sh (\frac{1}{1-xt'}z Q)= (\frac{1}{1-xt} \sh \frac{1}{1-xt'})z(\frac{1}{1-xt} \sh Q).$$
To prove the lemma one checks that $\frac{1}{1-xt} \sh \frac{1}{1-xt'}=\frac{1}{1-x(t+t')}$.
\end{proof}


\begin{proposition}\label{proposition Qt}
For $n\geq 1$:
$$Q_n(t_{a_1},\dots t_{a_n})=(\frac{1}{1-xt_{a_1}}\sh Q_{n-1}(t_{a_2},\dots t_{a_n})) z.$$
\end{proposition}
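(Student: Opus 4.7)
The plan is to prove the identity by induction on $n \geq 1$, with Lemma \ref{Q series lemma} doing all of the real work. Both the base case and the inductive step will reduce to a single application of the lemma followed by a bookkeeping check that the partial sums $u_k$ appearing in the definition~(\ref{Definition Qseries}) line up correctly.

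For the base case $n = 1$, I would simply note that $Q_1(t_{a_1}) = \frac{1}{1-xt_{a_1}}z$ from the definition, while the right-hand side reads $\bigl(\frac{1}{1-xt_{a_1}} \sh Q_0\bigr)\, z = \bigl(\frac{1}{1-xt_{a_1}} \sh 1\bigr)\, z = \frac{1}{1-xt_{a_1}}z$, using that $Q_0 = 1$ is the unit of the shuffle product.

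For the inductive step, I would first unfold the definition of $Q_{n-1}$ at the arguments $(t_{a_2},\dots,t_{a_n})$ to obtain the factorisation
\[
Q_{n-1}(t_{a_2},\dots,t_{a_n}) \;=\; \frac{1}{1-x(t_{a_2}+\cdots+t_{a_n})}\, z \cdot Q_{n-2}(t_{a_2},\dots,t_{a_{n-1}}),
\]
in which the second factor is recognised by inspecting the partial sums $v_k = t_{a_2}+\cdots+t_{a_{k+1}}$. Lemma \ref{Q series lemma} then applies with $t = t_{a_1}$, $t' = t_{a_2}+\cdots+t_{a_n}$ and $Q = Q_{n-2}(t_{a_2},\dots,t_{a_{n-1}})$, yielding
\[
\frac{1}{1-xt_{a_1}} \sh Q_{n-1}(t_{a_2},\dots,t_{a_n}) \;=\; \frac{1}{1-x(t_{a_1}+\cdots+t_{a_n})}\, z \bigl(\frac{1}{1-xt_{a_1}} \sh Q_{n-2}(t_{a_2},\dots,t_{a_{n-1}})\bigr).
\]
Multiplying on the right by $z$ and invoking the inductive hypothesis, which rewrites $\bigl(\frac{1}{1-xt_{a_1}} \sh Q_{n-2}(t_{a_2},\dots,t_{a_{n-1}})\bigr) z$ as $Q_{n-1}(t_{a_1},\dots,t_{a_{n-1}})$, the right-hand side becomes $\frac{1}{1-x(t_{a_1}+\cdots+t_{a_n})}\, z \cdot Q_{n-1}(t_{a_1},\dots,t_{a_{n-1}})$, which is exactly $Q_n(t_{a_1},\dots,t_{a_n})$ by direct reading of~(\ref{Definition Qseries}).

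There is no deep obstacle here. The only real care is tracking how the partial sums shift when the argument list is shortened from $(t_{a_1},\dots,t_{a_n})$ to $(t_{a_2},\dots,t_{a_n})$ and then to $(t_{a_2},\dots,t_{a_{n-1}})$, and matching them to the top and bottom factors of $Q_n$. Once this bookkeeping is in place, Lemma \ref{Q series lemma} packages the only nontrivial computation.
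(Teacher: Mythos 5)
Your proof is correct and takes essentially the same route as the paper's: the paper applies Lemma \ref{Q series lemma} iteratively to compute $\frac{1}{1-xt_{a_1}}\sh Q_{n-1}(t_{a_2},\dots,t_{a_n})$ as an explicit product, whereas you package that same iteration as an induction on $n$ with one application of the lemma per step. The two arguments are interchangeable.
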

\begin{proof}
The proposition is clear for $n=1$. We recall that, for $n>1$, $Q_{n-1}(t_{a_2},\dots t_{a_n})=\frac{1}{1-xv_{n-1}}z \cdots \frac{1}{1-xv_{1}}z$, where $v_k=t_{a_2}+\cdots+t_{a_{k+1}}$, for $k\in[1,n-1]$. Hence, by applying lemma \ref{Q series lemma} iteratively, we find for $n>1$ that:  
\begin{equation}\label{xt sh Q}
\frac{1}{1-xt_{a_1}}\sh Q_{n-1}(t_{a_2},\dots t_{a_n})=\frac{1}{1-xu_{n}}z \cdots \frac{1}{1-xu_{2}}z \frac{1}{1-xu_{1}},
\end{equation}
where $u_1=t_{a_1}$ and $u_k=t_{a_1}+v_{k-1}=t_{a_1}+ \cdots +t_{a_k}$, for $k\geq 2$. The proposition follows.
\end{proof}

\begin{proposition}\label{shuffle Qseries}
For $(p,q)\in (\N^{*})^2$, we have: 
$$ Q_p(t_1,\dots,t_p) \sh Q_q(t_{p+1},\dots,t_{p+q})=\underset{\sigma \in S(p,q)}{\sum} Q_{p+q}(t_{\sigma^{-1}(1)},\dots, t_{\sigma^{-1}(p+q)}).$$
\end{proposition}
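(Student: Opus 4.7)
The natural approach is induction on the total depth $N=p+q$, exploiting the recursive structure of Proposition \ref{proposition Qt} together with the inductive form of the shuffle product (equation (\ref{shuffle product12})), which for words ending in $z$ reads $uz \sh vz = (u \sh vz)z + (uz \sh v)z$, and extends to series by linearity.

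For the base case $p=q=1$, I would compute $Q_1(t_1) \sh Q_1(t_2) = \tfrac{1}{1-xt_1}z \sh \tfrac{1}{1-xt_2}z$ directly: apply the recursive shuffle identity once and then Lemma \ref{Q series lemma} to each of the two terms, obtaining $\tfrac{1}{1-x(t_1+t_2)}z\tfrac{1}{1-xt_1}z + \tfrac{1}{1-x(t_1+t_2)}z\tfrac{1}{1-xt_2}z = Q_2(t_1,t_2)+Q_2(t_2,t_1)$, matching the sum over $S(1,1)=\{\mathrm{id},(1\,2)\}$.

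For the inductive step, with $p,q\geq 1$ and $p+q\geq 3$, apply Proposition \ref{proposition Qt} to write
\[ Q_p(t_1,\dots,t_p) = \bigl(\tfrac{1}{1-xt_1}\sh Q_{p-1}(t_2,\dots,t_p)\bigr)z, \qquad Q_q(t_{p+1},\dots,t_{p+q}) = \bigl(\tfrac{1}{1-xt_{p+1}}\sh Q_{q-1}(t_{p+2},\dots,t_{p+q})\bigr)z. \]
Applying the recursive shuffle identity, then associativity and commutativity of $\sh$, one obtains
\[ Q_p\sh Q_q = \bigl(\tfrac{1}{1-xt_1}\sh\bigl(Q_{p-1}(t_2,\dots,t_p)\sh Q_q(t_{p+1},\dots,t_{p+q})\bigr)\bigr)z + \bigl(\tfrac{1}{1-xt_{p+1}}\sh\bigl(Q_p(t_1,\dots,t_p)\sh Q_{q-1}(t_{p+2},\dots,t_{p+q})\bigr)\bigr)z. \]
Each inner shuffle has total depth strictly less than $p+q$, so the induction hypothesis expresses it as a sum of $Q_{p+q-1}$ series; re-applying Proposition \ref{proposition Qt} in reverse then reconstitutes each summand as a $Q_{p+q}$ series whose first variable slot is occupied by $t_1$ (respectively $t_{p+1}$).

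The remaining step is the combinatorial identification with $\sum_{\sigma\in S(p,q)} Q_{p+q}(t_{\sigma^{-1}(1)},\dots,t_{\sigma^{-1}(p+q)})$. This rests on the standard partition of $(p,q)$-shuffles according to the value of $\sigma^{-1}(1)$: since the restriction of $\sigma^{-1}$ to the images of $[1,p]$ and $[p+1,p+q]$ is increasing on each, one necessarily has $\sigma^{-1}(1)\in\{1,p+1\}$. Shuffles with $\sigma^{-1}(1)=1$ are in bijection with $S(p-1,q)$ acting on the tail $(t_2,\dots,t_p;t_{p+1},\dots,t_{p+q})$, and analogously for $\sigma^{-1}(1)=p+1$. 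The main obstacle is the bookkeeping required to verify that this bijection on indices matches the permuted variables produced by the induction hypothesis; once this matching is checked, the two terms reassemble precisely into the required sum over $S(p,q)$.
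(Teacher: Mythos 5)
Your proposal is correct and follows essentially the same route as the paper: induction on $p+q$, splitting $Q_p\sh Q_q$ via the recursive shuffle identity into two terms indexed by whether the last letter comes from the first or second factor, reconstituting each via Proposition \ref{proposition Qt}, and matching the two terms with the partition $S(p,q)=S^1(p,q)\sqcup S^{p+1}(p,q)$ according to $\sigma^{-1}(1)\in\{1,p+1\}$. The only cosmetic difference is that the paper absorbs the base case into the induction via the conventions $Q_0=1$ and $S(0,q)=\{\mathrm{id}\}$ rather than treating $p=q=1$ separately.
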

\begin{proof}
We show the formula by induction on $p+q$. By proposition \ref{proposition Qt}, we have: $Q_p(t_1,\dots,t_p)=(\frac{1}{1-xt_1} \sh Q_{p-1}(t_2,\dots t_p))z$ and $Q_q(t_{p+1},\dots,t_{p+q})=(\frac{1}{1-xt_1} \sh Q_{q-1}(t_{p+2},\dots t_{p+q}))z$. Hence, by applying formula (\ref{shuffle product12}) of section \ref{Section rappel}, we get:
$$ Q_p(t_1,\dots,t_p) \sh Q_q(t_{p+1},\dots,t_{p+q})= A z+B z ,$$
where
$$A= \frac{1}{1-xt_1}\sh Q_{p-1}(t_2,\dots,t_p) \sh Q_q(t_{p+1},\dots,t_{p+q})$$
and $$ B=\frac{1}{1-xt_{p+1}}\sh Q_{p}(t_1,\dots,t_p) \sh Q_{q-1}(t_{p+2},\dots,t_{p+q}). $$
Now by applying the formula of this proposition for $(p-1,q)$ (or using that $Q_{p-1}=1$ if $p=1$), then using proposition \ref{proposition Qt} (or the convention $S(p-1,q)=\{ \mathrm{id}\}$ if $p=1$) we get:
\begin{align*}
Az&=\sum_{\sigma \in S(p-1,q)} (\frac{1}{1-xt_1}\sh Q_{p+q-1}(t_{\sigma^{-1}(1)+1},\dots,t_{\sigma^{-1}(p+q-1)+1}))z\\
&= \sum_{\sigma \in S(p-1,q)}  Q_{p+q}(t_1, t_{\sigma^{-1}(1)+1},\dots,t_{\sigma^{-1}(p+q-1)+1})=\sum_{\sigma \in S^1(p,q)}  Q_{p+q}(t_{\sigma^{-1}(1)},\dots, t_{\sigma^{-1}(p+q)})
\end{align*}
where $S^1(p,q)\subset S(p,q)$ is the set of shuffles fixing $1$. Similarly, we get:
\begin{align*}
Bz= \sum_{\sigma \in  S^{p+1}(p,q)}  Q_{p+q}(t_{\sigma^{-1}(1)},\dots, t_{\sigma^{-1}(p+q)}).
\end{align*}
Where $S^{p+1}(p,q) \subset S(p,q)$ is the set of shuffles mapping $p+1$ to $1$. Since $S(p,q)=S^1(p,q) \sqcup S^{p+1}(p,q)$, the proposition for the couple $(p,q)$ follows from the formulas for $Az$ and $Bz$ obtained above. We have proved the proposition.
\end{proof}
\section{Orthogonality between $F$ and $\ls$  with respect to $\langle -,-\rangle_\phi$}\label{section duality}
We prove theorem \ref{theorem duality} below stating that $F\subset V$ is the space orthogonal to $\ls$ with respect to a perfect pairing $\langle-,-\rangle_{\phi}: V\otimes \K \langle x,z \rangle \to \K$. In subsection \ref{orthogonal ls}, we give a decomposition (corollary \ref{ls orth}) of the orthogonal complement $\ls^{\perp}$ of $\ls$ with respect to the canonical paring $\langle - , -\rangle$ of $\K \langle x,z \rangle$. In subsection \ref{morphism phi}, we introduce in proposition \ref{morphisme phi} a linear isomorphism of bigraded spaces $\phi : V\to \K \langle x,z \rangle$ and show (corollary \ref{phi F}) that the image of $F$ by $\phi$ is $\ls^{\perp}$. From these results we deduce in subsection \ref{proof orthogonality} that:
\begin{theorem}\label{theorem duality}
\baselineskip 18pt
\begin{itemize}
\item[1)] The space $F$ is the space orthogonal to $\ls$ with respect to the perfect bigraded pairing $\langle-,-\rangle_{\phi}: V\otimes \K \langle x,z \rangle \to \K$ given by: 
$$\langle v,w\rangle_{\phi}=\langle \phi(v),w\rangle$$
for $ v\in V $ and  $w\in \K \langle x,z \rangle$.
\item[2)] Denote by $\ls^\vee$ the bigraded dual of $\ls$. We have an isomorphism of bigraded vector spaces $\varepsilon : V/ F\to \ls^\vee$ given by $\bar{v}\mapsto \langle v,-\rangle_{\phi}=\langle \phi(v),-\rangle$ for $\bar{v}\in V/ F$, where $v$ is any lift of $\bar{v}$ to $V$.
\end{itemize}
\end{theorem}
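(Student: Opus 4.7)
The plan is to derive the theorem as a formal consequence of the two main technical results cited just above it: the bigraded linear isomorphism $\phi:V\to\K\langle x,z\rangle$ of proposition \ref{morphisme phi} and the identification $\phi(F)=\ls^\perp$ of corollary \ref{phi F}, where $\ls^\perp$ is taken relative to the canonical pairing $\langle-,-\rangle$ of $\K\langle x,z\rangle$. Once these are in hand, nothing deep remains; the whole statement is a transport-of-structure argument.

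First I would verify that $\langle-,-\rangle_\phi$ is indeed a perfect bigraded pairing. Bigradedness follows because $\phi$ is a bigraded isomorphism (so $\phi(V_{m,k})\subset\K\langle x,z\rangle_{m,k}$) and the canonical pairing pairs $\K\langle x,z\rangle_{m,k}$ with $\K\langle x,z\rangle_{m,k}$ non-degenerately; hence $\langle V_{m,k},\K\langle x,z\rangle_{m',k'}\rangle_\phi=0$ unless $(m,k)=(m',k')$. Perfectness is immediate: if $v\in V$ satisfies $\langle v,w\rangle_\phi=0$ for all $w$, then $\phi(v)$ is orthogonal to all of $\K\langle x,z\rangle$ under the canonical pairing, hence $\phi(v)=0$ and $v=0$; the symmetric statement for $w$ follows because $\phi$ is surjective.

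For part (1), I need to identify the orthogonal $\ls^{\perp_\phi}\subset V$ of $\ls$ under $\langle-,-\rangle_\phi$ with $F$. By the very definition of $\langle-,-\rangle_\phi$,
\begin{equation*}
\ls^{\perp_\phi}=\{v\in V\mid\langle\phi(v),w\rangle=0\text{ for all }w\in\ls\}=\phi^{-1}(\ls^\perp).
\end{equation*}
Corollary \ref{phi F} says $\phi(F)=\ls^\perp$, and since $\phi$ is a bijection this gives $\phi^{-1}(\ls^\perp)=F$, yielding $\ls^{\perp_\phi}=F$ as required.

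For part (2), with the perfect bigraded pairing $\langle-,-\rangle_\phi$ and the orthogonality relation $F=\ls^{\perp_\phi}$ established, the pairing descends to a perfect bigraded pairing $V/F\otimes\ls\to\K$, $\bar{v}\otimes w\mapsto\langle v,w\rangle_\phi$ (well-defined since $\langle F,\ls\rangle_\phi=0$, and perfect on each bidegree because each $\ls_m^k$ is finite-dimensional and any $v\in V$ pairing trivially with $\ls$ lies in $F$). The induced map $\varepsilon:V/F\to\ls^\vee$, $\bar{v}\mapsto\langle v,-\rangle_\phi$, is therefore an isomorphism of bigraded vector spaces — this is a standard consequence of a perfect bigraded pairing in finite-dimensional bidegrees, and can alternatively be read off from part (2) of proposition \ref{Lie pairing} (applied with $g=0$, ignoring the Lie-theoretic content). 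No real obstacle is expected in this section: the entire difficulty of the theorem has been absorbed into the construction of $\phi$ and the computation of $\phi(F)$ carried out in subsections \ref{orthogonal ls} and \ref{morphism phi}.
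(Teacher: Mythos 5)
Your proposal is correct and follows exactly the paper's own route: the paper (subsection \ref{proof orthogonality}) also derives the theorem as an immediate consequence of corollary \ref{phi F}, the fact that $\phi$ is a bigraded isomorphism, and the perfectness of the canonical pairing $\langle-,-\rangle$. You have merely spelled out the transport-of-structure details that the paper leaves implicit.
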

Note that the theorem implies that the dihedral Lie coalgebra $\D\simeq V/F$ and $\ls^\vee$ are isomorphic bigraded vector spaces.  
\subsection{The orthogonal complement of $\ls$}\label{orthogonal ls} We denote by $\langle - , -\rangle$ and $\langle - , -\rangle_Y$ the canonical perfect pairings (as defined in subsection \ref{pair adj}) of $\K \langle x,z \rangle$ and $ \K \langle Y \rangle$ respectively, and by $\sh$ and $\sh_Y$ the shuffle products (defined in subsection \ref{subsection shuffle}) of $ \K \langle x,z\rangle$ and $\K \langle Y\rangle$ respectively. Let $i:\K \langle Y \rangle\to \K \langle x,z \rangle$ be the linear section to the map $\pi$ introduced in subsection \ref{ssection ls} (equation (\ref{projection pi})) given by $ i(1)=1$ and $ i(y_{n_1}\cdots y_{n_m})=x^{n_1-1}z\cdots x^{n_m-1}z,$ for $k\in \K, m\geq1 \text{ and } (n_1,\dots,n_m) \in (\N^*)^m$, and let $\K \langle x,z \rangle_{{+}}$ and $\K \langle Y \rangle_{{+}}$ be the vector spaces of elements with zero constant term of $\K \langle x,z \rangle$ and $\K \langle Y \rangle$, respectively:
\begin{proposition}\label{primitive and shuffle}
\baselineskip18pt
For $w$ in $\K \langle x,z \rangle$, we have the following equivalences:
\begin{itemize}
\item[1)] $\Delta(w)=w\otimes 1+1\otimes w$ if and only if $w\in (\K \oplus \K \langle x,z\rangle_{{+}}^{\sh2})^{\perp}$,
\item[2)] $\Delta_Y(\pi(w))=\pi(w)\otimes 1+1\otimes \pi(w)$ if and only if $w\in i(\K\oplus \K \langle Y \rangle_{{+}}^{\sh_Y 2} )^\perp$,
\end{itemize}
where $A^{\perp}$ is the orthogonal complement of $A$ with respect to $\langle -,- \rangle$ for $A\subset \K \langle x,z \rangle$ and $B^{\ast 2}= B \ast B$ for $\ast$ a given product.
\end{proposition}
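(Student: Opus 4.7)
The plan is to derive both equivalences from the adjointness between the shuffle product and the shuffle coproduct recorded in \eqref{coproduct formula}, combined, for part (2), with an adjointness between $\pi$ and $i$. Only the careful bookkeeping of constants (the $\K$ summand in $\K\oplus \K\langle x,z\rangle_{+}^{\sh 2}$, and its analogue over $Y$) requires any real thought; the rest is a mechanical duality argument.

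For (1), I would pair the element $\Delta(w)-w\otimes 1-1\otimes w \in \K\langle x,z\rangle^{\otimes 2}$ against an arbitrary $a\otimes b$ using the perfect pairing $\langle-,-\rangle^{\otimes 2}$. By \eqref{coproduct formula} this equals
\[
\langle w, a\sh b\rangle - \langle w, a\rangle\,\langle 1, b\rangle - \langle 1, a\rangle\,\langle w, b\rangle.
\]
Since $\langle-,-\rangle^{\otimes 2}$ is non-degenerate, $w$ is primitive if and only if this expression vanishes for all $a,b \in \K\langle x,z\rangle$. Specializing to $a,b \in \K\langle x,z\rangle_{+}$ forces $\langle w, a\sh b\rangle = 0$, i.e.\ $w\perp \K\langle x,z\rangle_{+}^{\sh 2}$; and taking $a=b=1$ (where $1\sh 1 = 1$) forces $\langle w, 1\rangle = 0$, i.e.\ $w\perp \K$. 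Conversely, writing $a = \langle 1,a\rangle \cdot 1 + a_{+}$ and $b = \langle 1,b\rangle \cdot 1 + b_{+}$ with $a_{+},b_{+} \in \K\langle x,z\rangle_{+}$, the shuffle $a\sh b$ expands into $\langle 1,a\rangle\langle 1,b\rangle\cdot 1 + \langle 1,a\rangle b_{+} + \langle 1,b\rangle a_{+} + a_{+}\sh b_{+}$, so the two orthogonality conditions recover the full identity for general $a,b$.

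For (2), the intermediate step is the adjointness relation $\langle i(u), w\rangle = \langle u, \pi(w)\rangle_Y$ for every $u\in\K\langle Y\rangle$ and $w\in \K\langle x,z\rangle$. This is immediate from the definitions on unitary monomials: $\pi$ sends $x^{n_1-1}z\cdots x^{n_m-1}z$ to $y_{n_1}\cdots y_{n_m}$ and annihilates $\K\langle x,z\rangle\,x$, hence the coefficient of $i(u)$ in $w$ equals the coefficient of $u$ in $\pi(w)$ (both sides kill monomials ending in $x$). Applying (1) with $\K\langle x,z\rangle$ replaced by $\K\langle Y\rangle$ gives that $\Delta_Y(\pi(w)) = \pi(w)\otimes 1 + 1\otimes \pi(w)$ holds in $\K\langle Y\rangle^{\otimes 2}$ if and only if $\pi(w) \in (\K \oplus \K\langle Y\rangle_{+}^{\sh_Y 2})^{\perp}$ inside $\K\langle Y\rangle$. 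By the adjointness of $i$ and $\pi$, this orthogonality translates exactly into $w \perp i(\K\oplus \K\langle Y\rangle_{+}^{\sh_Y 2})$ in $\K\langle x,z\rangle$, which is the desired condition.

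The main obstacle, to the extent there is one, is purely the constant-term bookkeeping: one must verify that $\K \cap \K\langle x,z\rangle_{+}^{\sh 2}=0$ so that the direct sum $\K\oplus \K\langle x,z\rangle_{+}^{\sh 2}$ is well defined (clear since $\K\langle x,z\rangle_{+}^{\sh 2}\subset \K\langle x,z\rangle_{+}$), and that the $a=b=1$ specialization is precisely what produces the $\K$ summand in the orthogonal complement. Everything else is a direct consequence of the adjunctions already established in the reminders section.
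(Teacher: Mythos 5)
Your proof is correct and takes essentially the same route as the paper, whose proof is a one-line citation of the adjointness formula (\ref{coproduct formula}) and the compatibility of the pairings under $i$; you have simply written out the duality computation and the constant-term bookkeeping that the paper leaves implicit. No gaps.
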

\begin{proof}
The result follows from formula (\ref{coproduct formula}) section \ref{Section rappel} and the fact that $ \langle a , b\rangle_Y=\langle i(a), i(b)\rangle$.
\end{proof}

\begin{corollary}\label{ls orth}
We can decompose the orthogonal complement $\ls^\perp$ of $\ls$ with respect to $\langle-,-\rangle$ into:
$$ \ls^\perp=\K + \K \cdot x +K_{1,even}+ \K \langle x,z\rangle_{{+}}^{\sh2} +i(\K \langle Y \rangle_{{+}}^{\sh_Y 2}), $$
where $K_{1,even} \subset \K \langle x,z\rangle$ is the vector space of elements of depth $1$ and even weight.
\end{corollary}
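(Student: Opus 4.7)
The plan is to realise $\ls$ as an intersection $\ls = A\cap B\cap C$ of three bigraded subspaces of $\K\langle x,z\rangle$ and dualize term by term. Reading off Definition \ref{definition ls}, set
\[ A = \{\psi : \Delta(\psi) = 1\otimes\psi + \psi\otimes 1\},\quad B = \{\psi : \Delta_Y(\pi(\psi)) = 1\otimes\pi(\psi) + \pi(\psi)\otimes 1\}, \]
and let $C$ be the bigraded subspace of $\K\langle x,z\rangle$ consisting of elements whose depth-$0$ component vanishes and whose depth-$1$ component has only odd-weight contributions. Each of the three spaces is visibly bigraded for depth and weight, and by definition $\ls = A\cap B\cap C$.

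The first step is to apply Proposition \ref{primitive and shuffle} to obtain $A = (\K \oplus \K\langle x,z\rangle_{+}^{\sh 2})^\perp$ and $B = i(\K\oplus \K\langle Y\rangle_{+}^{\sh_Y 2})^\perp$. The canonical pairing $\langle-,-\rangle$ restricts to a perfect pairing on each finite-dimensional bigraded piece of $\K\langle x,z\rangle$, so orthogonality is involutive on bigraded subspaces; dualizing gives
\[ A^\perp = \K + \K\langle x,z\rangle_{+}^{\sh 2},\qquad B^\perp = \K + i(\K\langle Y\rangle_{+}^{\sh_Y 2}). \]
The orthogonal of $C$ is immediate from its monomial basis: $C^\perp$ is spanned by the depth-$0$ monomials $x^n$ and the even-weight depth-$1$ monomials, i.e.\ $C^\perp = \K[x] + K_{1,even}$.

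Next I would invoke the bigraded identity $(U_1\cap U_2\cap U_3)^\perp = U_1^\perp + U_2^\perp + U_3^\perp$ for bigraded subspaces $U_i\subset \K\langle x,z\rangle$, which reduces to the finite-dimensional fact $(U\cap V)^\perp = U^\perp + V^\perp$ applied in each bidegree. Combining the three computations above yields
\[ \ls^\perp = \K[x] + K_{1,even} + \K\langle x,z\rangle_{+}^{\sh 2} + i(\K\langle Y\rangle_{+}^{\sh_Y 2}). \]
To recover the stated formula I would then absorb $\K[x]$ into the other summands: the shuffle identity $x^{\sh n} = n!\,x^n$ places $x^n$ in $\K\langle x,z\rangle_{+}^{\sh 2}$ for every $n\ge 2$, whence $\K[x] \subset \K + \K\cdot x + \K\langle x,z\rangle_{+}^{\sh 2}$, giving exactly the claimed decomposition.

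There is no serious obstacle here: the argument is essentially a bookkeeping exercise combining Proposition \ref{primitive and shuffle}, the bigraded duality between intersections and sums of orthogonals, and the small observation that pure powers of $x$ already lie in the shuffle square. The mildest care is needed in point one (confirming that $\ls$ really is the intersection of three bigraded subspaces, so the depth/weight decomposition survives) and in writing down the involutivity of $(\,\cdot\,)^\perp$ degree by degree.
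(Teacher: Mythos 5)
Your proposal is correct and follows essentially the same route as the paper: both reduce to Proposition \ref{primitive and shuffle} via the bigraded identities $(U\cap V)^\perp = U^\perp + V^\perp$ and $U^{\perp\perp}=U$ (valid degree by degree since each bidegree is finite dimensional), and then absorb the depth-zero part $\K[x]$ into $\K + \K\cdot x + \K\langle x,z\rangle_{+}^{\sh 2}$ --- the paper via the decomposition $\K\oplus\K\cdot x\oplus(x\sh\K[x]_+)$, you via $x^{\sh n}=n!\,x^n$, which is the same observation. No gaps.
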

\begin{proof}
One uses the previous proposition and that the depth zero component $K_0$ of $\K \langle x,z\rangle$ is $K_0=\K \oplus \K[x]=\K\oplus \K \cdot x \oplus (x\sh \K[x]_+)$, where $\K[x]_+$ is the vector subspace of $\K[x]$ of polynomials with no constant term. 
\end{proof}
\subsection{A map $\phi$ sending $F$ to the orthogonal complement of $\ls$}\label{morphism phi}
We recall that the vector space $V=W\oplus U$ introduced in section \ref{diheral coalgebra} is freely generated by the elements $I(n_1,\dots,n_m)$ (generating $W$) and the elements $I'(n_0,n_1,\dots,n_m)$ (generating $U$).
\begin{proposition}\label{morphisme phi}
The linear map $\phi : V \to \K\langle x,z\rangle$ given by: 
$$ \phi(I(n_1,\dots,n_m))=x^{n_m-1}z\cdots x^{n_1-1}z\quad , \quad \phi(I'(n_0,n_1,\dots,n_m))=x^{n_m-1}z\cdots x^{n_1-1}z x^{n_0-1} \sh x,$$
for $m\geq1, (n_0,\dots,n_m) \in (\N^*)^{m+1}$ and 
$$ \phi(I'(n_0))=x^{n_0-1}\sh x \quad , \quad \phi(I'(0))=1,$$ 
for $n_0 \in N^* $ is an isomorphism of bigraded vector spaces. 
\end{proposition}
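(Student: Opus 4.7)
The plan is to prove $\phi$ is an isomorphism bidegree by bidegree, exploiting a block-triangular structure of its matrix. First I would check that $\phi$ respects the bigrading: shuffling with $x$ preserves depth and raises weight by one, so the definition yields a bigraded map. A dimension count confirms that both sides have dimension $\binom{k}{m}$ in bidegree $(m,k)$, using $\dim W_{m,k}=\binom{k-1}{m-1}$, $\dim U_{m,k}=\binom{k-1}{m}$ (compositions of $k$ into $m$ and $m+1$ positive parts), and the fact that monomials $x^{a_0}z\cdots zx^{a_m}$ of depth $m$ and weight $k$ are indexed by compositions of $k-m$ into $m+1$ non-negative parts. The depth zero case is then immediate from $\phi(I'(0))=1$ and $\phi(I'(k))=x^{k-1}\sh x = k\,x^k$ for $k\geq 1$.

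For $m\geq 1$, I would split the monomial basis of $\K\langle x,z\rangle_{m,k}$ into the set $A$ of monomials ending in $z$ and the set $B$ of monomials ending in $x$, and let $\pi_x$ denote the projection onto $\langle B\rangle$. A direct inspection shows that $\phi$ sends $\{I(n_1,\dots,n_m)\}$ bijectively onto $A$ (with $n_i$ corresponding to the exponent plus one of the $(m+1-i)$-th block from the left). Consequently, the matrix of $\phi$ in the decompositions $\{I\}\sqcup\{I'\}$ and $A\sqcup B$ is block upper triangular with a permutation block in the upper left, so invertibility of $\phi$ reduces to showing that the restricted map $I'(n_0,\dots,n_m)\mapsto \pi_x\phi(I'(n_0,\dots,n_m))$ is surjective onto $\langle B\rangle$.

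The key computation expands the shuffle: setting $M=x^{n_m-1}z\cdots zx^{n_1-1}zx^{n_0-1}$, the single-letter shuffle formula gives $M\sh x=\sum_{i=0}^{m}n_i\, M^{(i)}$, where $M^{(i)}$ is $M$ with its $i$-th $x$-block $x^{n_i-1}$ thickened to $x^{n_i}$; the coefficient $n_i$ counts the insertion positions yielding $M^{(i)}$. The term $M^{(0)}$ always ends in $x^{n_0}$, hence lies in $\langle B\rangle$; the terms $M^{(i)}$ for $i\geq 1$ retain the last block $x^{n_0-1}$, so contribute to $B$ only when $n_0\geq 2$, in which case their last-$x$-exponent is $n_0-1$.

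Finally, I would prove surjectivity of $\pi_x\phi\vert_{\{I'\}}$ by induction on the exponent $a_m$ of the last $x$-block of the target $T=x^{a_0}z\cdots zx^{a_m}\in B$. Taking as candidate preimage $I'(a_m,a_{m-1}+1,\dots,a_0+1)$, one has $M^{(0)}=T$. When $a_m=1$, all $M^{(i)}$ with $i\geq 1$ end in $z$ and die under $\pi_x$, so $\pi_x\phi(I'(1,a_{m-1}+1,\dots,a_0+1))=T$ directly. When $a_m\geq 2$, the correction terms $M^{(i)}$ for $i\geq 1$ have last-$x$-exponent $a_m-1<a_m$, hence lie in the image by the inductive hypothesis; dividing by the nonzero scalar $a_m$ places $T$ in the image. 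The main obstacle is arranging the triangularity correctly via the $A/B$ split and the $a_m$-induction, but once these orderings are fixed the remaining work is straightforward bookkeeping from the $M\sh x$ expansion.
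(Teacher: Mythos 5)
Your proposal is correct and takes essentially the same route as the paper's proof: both rest on expanding $x^{n_m-1}z\cdots x^{n_1-1}zx^{n_0-1}\sh x$ (the paper via the identity $wzx^{k}\sh x=(k+1)wzx^{k+1}+(w\sh x)zx^{k}$, you via the block-thickening formula $M\sh x=\sum_i n_i M^{(i)}$), so that $\phi$ becomes triangular with respect to the exponent of the final $x$-block with nonzero diagonal entries $n_0$. Your write-up simply makes explicit the dimension count, the $A/B$ splitting, and the induction on that last exponent, all of which the paper leaves implicit in the phrase ``we deduce \dots that the family \dots is a basis.''
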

\begin{proof}
The family $\{1\} \cup \{x^{n_0}\}_{n_0 \geq 1} \cup \{ x^{n_m-1}z\cdots x^{n_1-1}z , x^{n_m-1}z\cdots x^{n_1-1}z x^{n_0}\}_{m\geq 1, n_i \geq 1} $ is a basis of $\K\langle x,z \rangle$. We deduce from this and the following:
$$ x^{k} \sh x=\frac{x^{k+1}}{k+1}\quad, \quad wzx^{k}\sh x=(k+1)wzx^{k+1}+(w\sh x)zx^k, $$
for $k\geq 0$ and $w\in  \K\langle x,z \rangle$, that the family  $$\{1\} \cup \{x^{n_0-1}\sh x\}_{n_0 \geq 1} \cup \{ x^{n_m-1}z\cdots x^{n_1-1}z , x^{n_m-1}z\cdots x^{n_1-1}z x^{n_0-1}\sh x\}_{m\geq 1, n_i \geq 1} $$ is a basis for $\K\langle x,z \rangle$. The proposition follows. 
\end{proof}

\begin{corollary}\label{xy decomposition}
We can uniquely decompose an element $w\in \K\langle x,z\rangle_{{+}}$ into a sum:
$$w=w_1z+w_2\sh x, $$
with $w_1$ and $w_2$ in $\K\langle x,z\rangle$. 
\end{corollary}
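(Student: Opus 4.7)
The plan is to read the decomposition directly off the basis of $\K\langle x,z\rangle$ exhibited in Proposition~\ref{morphisme phi}. Write that basis as $\{1\}\cup B_1\cup A_1\cup C_1$ with $B_1=\{x^{n_m-1}z\cdots x^{n_1-1}z\}_{m\geq 1,\, n_i\geq 1}$, $A_1=\{x^{n_0-1}\sh x\}_{n_0\geq 1}$, and $C_1=\{x^{n_m-1}z\cdots x^{n_1-1}z\, x^{n_0-1}\sh x\}_{m\geq 1,\, n_i\geq 1}$. The two summands $w_1z$ and $w_2\sh x$ in the claimed decomposition will correspond respectively to the $B_1$ part and to the $A_1\cup C_1$ part of this basis.

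First I would identify the spans of these two sub-families. Every monomial ending in $z$ factors uniquely as $x^{a_m}z\cdots x^{a_1}z$ with $a_i\geq 0$, so $B_1$ is a basis of $\K\langle x,z\rangle\cdot z$. On the shuffle side, the ``pre-shuffle'' words $\{x^{n_0-1}\}_{n_0\geq 1}\cup\{x^{n_m-1}z\cdots x^{n_1-1}z\, x^{n_0-1}\}_{m\geq 1,\, n_i\geq 1}$ enumerate all the monomials of $\K\langle x,z\rangle$: the first family covers $\{1,x,x^2,\ldots\}$, and the second covers every monomial containing at least one $z$ after factoring it as $x^{a_m}z\cdots x^{a_1}z\,x^{a_0}$ with $a_i\geq 0$. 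Hence $\mathrm{span}(A_1\cup C_1)=\K\langle x,z\rangle\sh x$, and the linear independence of $A_1\cup C_1$ already asserted in Proposition~\ref{morphisme phi} tells us in particular that the shuffle-by-$x$ map $\K\langle x,z\rangle\to\K\langle x,z\rangle,\ w\mapsto w\sh x$ is injective.

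Combining these identifications with the basis statement then yields the direct sum
$$\K\langle x,z\rangle=\K\oplus\K\langle x,z\rangle\cdot z\oplus\K\langle x,z\rangle\sh x,$$
which restricts to $\K\langle x,z\rangle_{{+}}=\K\langle x,z\rangle\cdot z\oplus\K\langle x,z\rangle\sh x$ on the augmentation ideal. This provides existence of the decomposition $w=w_1z+w_2\sh x$ together with uniqueness of the summands $w_1z$ and $w_2\sh x$. Upgrading this to uniqueness of $w_1$ and $w_2$ themselves is then immediate: right multiplication by $z$ is injective on the free associative algebra $\K\langle x,z\rangle$, and $\cdot\sh x$ is injective by what was just observed. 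The only mildly non-trivial point in the argument is the identification $\mathrm{span}(A_1\cup C_1)=\K\langle x,z\rangle\sh x$ — i.e.\ checking that the pre-shuffle words exhaust every monomial; beyond that, the corollary is a direct repackaging of Proposition~\ref{morphisme phi}.
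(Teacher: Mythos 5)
Your argument is correct and is exactly the route the paper intends: the corollary is stated as an immediate consequence of the basis of $\K\langle x,z\rangle$ exhibited in Proposition~\ref{morphisme phi}, and your identification of the spans of the two sub-families (monomials ending in $z$ spanning $\K\langle x,z\rangle z$, and the shuffled words spanning $\K\langle x,z\rangle\sh x$, with injectivity of $-\sh x$ and of right multiplication by $z$ giving uniqueness of $w_1$ and $w_2$) is precisely the missing verification. Nothing to add.
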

\begin{notation}\label{series ()}
For $m\geq 1$, we denote by $(t_1:\cdots : t_{m+1})$ and $(t_1, \dots , t_m)$ the images by $\phi$ of the series $\{t_1:\cdots : t_{m+1}\}$ and $\{t_1, \dots , t_m\}$ (defined in section \ref{diheral coalgebra}) respectively.
\end{notation}
 \begin{remark}\label{() frac}
For $m\geq 1$: $$(t_1:\cdots : t_{m+1}):=\frac{1}{1-x(t_{m}-t_{m+1})}z\cdots\frac{1}{1-x(t_{1}-t_{m+1})}z.$$
\end{remark}
\begin{proposition}\label{Image Vi}
Let $\phi$ be the isomorphism of proposition \ref{morphisme phi}.
\baselineskip18pt
\begin{itemize}
\item[1)] The image of $\Wst$ with respect to $\phi$ is $i(\K \langle Y\rangle_{{+}}^{\sh_Y2})$.
\item[2)] The image of $\Wsh$ with respect to $\phi$ is $ (\K \langle x,z\rangle z)^{\sh2}$.
\item[3)] The image of $\Wsh+U$ with respect to $\phi$ is $ \K\oplus\K\cdot x\oplus \K \langle x,z\rangle_{{+}}^{\sh2}$.
\end{itemize}
\end{proposition}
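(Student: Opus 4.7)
The plan is to treat the three equalities in turn, computing $\phi$ of the generating series defining $\Wst$ and $\Wsh$, and of the basis of $U$, then simplifying with Proposition \ref{shuffle Qseries} and Corollary \ref{xy decomposition}.

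For (1), I apply $\phi$ to the generators of $\Wst$. Using Notation \ref{series ()} and Remark \ref{() frac}, each $\phi(\{t_{\sigma^{-1}(1)}:\cdots:t_{\sigma^{-1}(p+q)}:t_{p+q+1}\})=(t_{\sigma^{-1}(1)}:\cdots:t_{\sigma^{-1}(p+q)}:t_{p+q+1})$ depends only on the differences $s_i:=t_i-t_{p+q+1}$, and direct expansion of the geometric series gives the coefficient of $s_1^{m_1-1}\cdots s_{p+q}^{m_{p+q}-1}$ as $i(y_{m_{\sigma^{-1}(p+q)}}\cdots y_{m_{\sigma^{-1}(1)}})$. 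A reindexing of $S(p,q)$ by $\sigma\mapsto w_0\sigma\rho_{p,q}$ (with $w_0$ the longest permutation of $[1,p+q]$ and $\rho_{p,q}$ the permutation reversing $[1,p]$ and $[p+1,p+q]$ separately) yields the identity
\[
\sum_{\sigma\in S(p,q)} y_{m_{\sigma^{-1}(p+q)}}\cdots y_{m_{\sigma^{-1}(1)}}=y_{m_p}\cdots y_{m_1}\sh_Y y_{m_{p+q}}\cdots y_{m_{p+1}}.
\]
Varying $(m_i)\in(\N^*)^{p+q}$ and $(p,q)\in(\N^*)^2$ then gives $\phi(\Wst)=i(\K\langle Y\rangle_{{+}}^{\sh_Y 2})$.

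For (2), the expansion $\{t_1,\ldots,t_m\}=\{t_1:t_1+t_2:\cdots:t_1+\cdots+t_m:0\}$ together with Remark \ref{() frac} shows $\phi(\{t_1,\ldots,t_m\})=\frac{1}{1-x(t_1+\cdots+t_m)}z\cdots\frac{1}{1-xt_1}z=Q_m(t_1,\ldots,t_m)$ in the notation of Section \ref{Section series}, and Proposition \ref{shuffle Qseries} turns the $\phi$-image of the generating series of $\Wsh$ into $Q_p(t_1,\ldots,t_p)\sh Q_q(t_{p+1},\ldots,t_{p+q})$. A triangular-structure argument shows that the $t$-coefficients of $Q_p$ span the depth-$p$ component of $\K\langle x,z\rangle z$ (indeed the coefficient of $t_1^{\alpha_1}\cdots t_p^{\alpha_p}$ is $x^{\alpha_p}z\cdots x^{\alpha_1}z$ plus lower-order terms under the lexicographic order on exponents); by bilinearity of $\sh$ the $t$-coefficients of $Q_p\sh Q_q$ span the shuffle of depth-$p$ and depth-$q$ components of $\K\langle x,z\rangle z$, and letting $(p,q)\in(\N^*)^2$ vary yields $\phi(\Wsh)=(\K\langle x,z\rangle z)^{\sh 2}$.

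For (3), a direct inspection of $\phi$ on the basis of $U$ gives $\phi(U)=\K\oplus(\K\langle x,z\rangle\sh x)$, since the family $\{1\}\cup\{x^{n_0-1}\}_{n_0\geq 1}\cup\{x^{n_m-1}z\cdots x^{n_1-1}zx^{n_0-1}\}_{m\geq 1,n_i\geq 1}$ is a basis of $\K\langle x,z\rangle$ (cf. the argument in Proposition \ref{morphisme phi}). From Corollary \ref{xy decomposition}, $\K\langle x,z\rangle_{{+}}=(\K\langle x,z\rangle z)\oplus(\K\langle x,z\rangle\sh x)$, so
\[
\K\langle x,z\rangle_{{+}}^{\sh 2}=(\K\langle x,z\rangle z)^{\sh 2}+(\K\langle x,z\rangle z)\sh(\K\langle x,z\rangle\sh x)+(\K\langle x,z\rangle\sh x)^{\sh 2},
\]
and the last two summands lie in $\K\langle x,z\rangle\sh x$ by associativity and commutativity of $\sh$. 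Combining with (2), $\phi(\Wsh+U)=\K+(\K\langle x,z\rangle z)^{\sh 2}+\K\langle x,z\rangle\sh x=\K+\K x+\K\langle x,z\rangle_{{+}}^{\sh 2}$, with directness of the sum immediate from the weight grading ($\K$, $\K x$, and $\K\langle x,z\rangle_{{+}}^{\sh 2}$ live in weights $0$, $1$, and $\geq 2$ respectively). The principal obstacle is the combinatorial identity underlying (1), where the right-to-left reading of $(t_1:\cdots:t_{m+1})$ produces a reversed-order shuffle that must be carefully matched with the standard $\sh_Y$-formula via the reparametrization above; parts (2) and (3) follow cleanly from the preparatory machinery.
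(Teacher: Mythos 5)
Your proposal is correct and follows essentially the same route as the paper's proof: part (1) by expanding the generating series and matching the permuted coefficient sum with a $\sh_Y$-product, part (2) via Proposition \ref{shuffle Qseries} together with a spanning argument for the coefficients of $Q_p\sh Q_q$ (the paper uses the explicit free family $R^p(n_1,\dots,n_{p+q})$ where you use a triangularity argument, to the same effect), and part (3) by computing $\phi(U)$ and decomposing $\K\langle x,z\rangle_{+}^{\sh 2}$ through Corollary \ref{xy decomposition}.
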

\begin{proof}
 We prove (1). By definition of $\Wst$, $\phi(\Wst)$ is generated by the coefficients of the series:
\begin{align*}M_{p,q}&:=\underset{\sigma \in S(p,q)}{\sum} ( t_{\sigma^{-1}(1)} : \cdots : t_{\sigma^{-1}(p+q)} :t_{p+q+1})\\
  & = \underset{\sigma \in S(p,q)}{\sum}\: \: \underset{n_1,\dots,n_m\geq 1}{\sum} i(y_{n_m}\cdots y_{n_{m-1}}\cdots y_{n_1}) (t_{\sigma^{-1}(1)}-t_{m+1})^{n_1} \cdots (t_{\sigma^{-1}(m)}-t_{m+1})^{n_m}\\
&=\underset{\sigma \in S(p,q)}{\sum} \: \: \underset{k_1,\dots,k_m\geq 1}{\sum}  i(y_{k_{\sigma^{-1}(m)}}y_{k_{\sigma^{-1}(m-1)} }\cdots y_{k_{\sigma^{-1}(1)}} )(t_1-t_{m+1})^{k_1} \cdots (t_m-t_{m+1})^{k_m},\\
\end{align*} 
for $(p,q) \in (\N^*)^2$. One checks using formula (\ref{shuffle product}) section \ref{Section rappel} that:
$$ \underset{\sigma \in S(p,q)}{\sum} y_{k_{\sigma^{-1}(m)}}\cdots y_{k_{\sigma^{-1}(m-1)} }=y_{k_{m}}\cdots y_{k_{p+1}}\sh y_{k_{p}}\cdots y_{k_1}. $$
This proves that $\phi(\Wst)=i(\K \langle Y\rangle_{{+}}^{\sh_Y2})$. We proved (1). We now show (2). To compute $\phi(\Wsh)$ we need to describe the coefficients of 
\begin{align*}
N_{p,q}&:=\underset{\sigma \in S(p,q)}{\sum} (t_{\sigma^{-1}(1)} , \dots , t_{\sigma^{-1}(p+q)})\\
&= \underset{\sigma \in S(p,q)}{\sum} Q_{p+q}(t_{\sigma^{-1}(1)},\dots, t_{\sigma^{-1}(p+q)}),
\end{align*}
where $Q_{p+q}$ is the series as in of formula (\ref{Definition Qseries}) of section \ref{Section series}. Hence, by proposition \ref{shuffle Qseries}: 
\begin{align*}
N_{p,q}^{\phi}& =  Q_p(t_1,\dots,t_p) \sh Q_q(t_{p+1},\dots,t_{p+q})\\ \label{Npq}
&= \underset{n_1,\dots,n_m\geq 0}{\sum} (x^{n_p}z\cdots x^{n_1}z \sh x^{n_{p+1}}z\cdots x^{n_{p+q}}z)R^p(n_1,\dots,n_{p+q}),
\end{align*}
where $R^p(n_1,\dots,n_{p+q})=t_1^{n_1}\cdots (t_1+\cdots+t_p)^{n_p}t_{p+1}^{n_{p+1}}\cdots (t_{p+1}+\cdots+t_{p+q})^{n_{p+q}}$. Since the monomials $R^p(n_1,\dots,n_{p+q})$ are linearly independent, we deduce that $\phi(\Wsh)= (\K \langle x,z\rangle z)^{\sh2}$. We still have to prove (3). One checks using the definition of $U$ and $\phi$ that 
\begin{equation}\label{phiU}
\phi(U)= \K \oplus \K\cdot x \oplus (\K \langle x,z\rangle_+ \sh x).
\end{equation}
 On the other hand, a direct computation using corollary \ref{xy decomposition} shows that 
$$\K \langle x,z\rangle_{{+}}^{\sh 2}=\K \langle x,z\rangle_{{+}} \sh x +(\K \langle x,z\rangle z)^{\sh 2}.$$
We deduce (3) from (2) and the last two equations.
\end{proof}

\begin{corollary}\label{phi F}
 The image of $F=W_R\oplus U$ with respect to $\phi$ is $\ls^{\perp}$.
\end{corollary}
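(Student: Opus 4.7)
The plan is to compute $\phi(F)$ piece by piece by decomposing $F = \Wst + \Wsh + W_{1,\mathrm{even}} + U$, then match the resulting sum term-for-term with the decomposition of $\ls^{\perp}$ supplied by Corollary~\ref{ls orth}.

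First I would invoke the already-established parts of Proposition \ref{Image Vi}: part (1) gives $\phi(\Wst) = i(\K\langle Y\rangle_{+}^{\sh_Y 2})$, and part (3) gives $\phi(\Wsh + U) = \K \oplus \K\cdot x \oplus \K\langle x,z\rangle_{+}^{\sh 2}$. Adding these two contributions already recovers four of the five summands of $\ls^{\perp}$ from Corollary~\ref{ls orth}, namely $\K$, $\K\cdot x$, $\K\langle x,z\rangle_{+}^{\sh 2}$, and $i(\K\langle Y\rangle_{+}^{\sh_Y 2})$. The remaining summand to account for is $K_{1,\mathrm{even}}$, the space of elements of depth $1$ and even weight in $\K\langle x,z\rangle$.

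For this last piece I would use the explicit formula for $\phi$ from Proposition~\ref{morphisme phi}: by definition $\phi(I(2n)) = x^{2n-1}z$ for $n \geq 1$, so since $W_{1,\mathrm{even}}$ is the span of the $I(2n)$, its image $\phi(W_{1,\mathrm{even}})$ is the span of $\{x^{2n-1}z : n \geq 1\}$, which is precisely $K_{1,\mathrm{even}}$. Combining the three calculations,
\[
\phi(F) = \phi(\Wst) + \phi(\Wsh + U) + \phi(W_{1,\mathrm{even}}) = i(\K\langle Y\rangle_{+}^{\sh_Y 2}) + \K + \K\cdot x + \K\langle x,z\rangle_{+}^{\sh 2} + K_{1,\mathrm{even}},
\]
which equals $\ls^{\perp}$ by Corollary \ref{ls orth}.

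I do not expect any serious obstacle here: all the genuine work has already been discharged in Proposition~\ref{Image Vi}, and identifying $\phi(W_{1,\mathrm{even}})$ with $K_{1,\mathrm{even}}$ is an immediate reading off of the definitions. The only minor thing to note is that the formulas for $\phi$ on $W$ reverse the indices (e.g.\ $I(n_1,\dots,n_m) \mapsto x^{n_m-1}z\cdots x^{n_1-1}z$), but this has no effect in the depth-$1$ case, so the identification of $\phi(W_{1,\mathrm{even}})$ with $K_{1,\mathrm{even}}$ is entirely unambiguous.
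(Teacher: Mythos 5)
The overall strategy (decompose $F$ and match the pieces against Corollary~\ref{ls orth}) is sound, and your treatment of $\Wst$ and of $\Wsh+U$ via Proposition~\ref{Image Vi} is exactly right. But the last step contains a genuine error: $\phi(W_{1,\mathrm{even}})$ is indeed the span of $\{x^{2n-1}z : n\geq 1\}$, and this is \emph{not} equal to $K_{1,\mathrm{even}}$. The space $K_{1,\mathrm{even}}$ consists of all elements of depth $1$ and even weight, i.e.\ it is spanned by \emph{all} monomials $x^a z x^b$ with $a+b$ odd; in weight $2n$ it has dimension $2n$, whereas $\phi(W_{1,\mathrm{even}})$ contributes only the single line $\K\, x^{2n-1}z$. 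So the term-for-term matching breaks down on the fifth summand, and the inclusion $K_{1,\mathrm{even}}\subseteq \phi(F)$ — which is what the corollary really requires — is not established by what you wrote.

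The conclusion can be rescued, and the paper does so by a dimension count that groups $W_{1,\mathrm{even}}$ together with $U_{1,\mathrm{even}}\subset U$: both $\phi(I(2n))=x^{2n-1}z$ and $\phi(I'(n_0,n_1))=x^{n_1-1}zx^{n_0-1}\sh x$ (with $n_0+n_1$ even) have depth $1$ and even weight, so $\phi(W_{1,\mathrm{even}}\oplus U_{1,\mathrm{even}})\subseteq K_{1,\mathrm{even}}$; in each even weight $2n$ both sides have dimension $2n$, and since $\phi$ is injective this forces equality. Because $U_{1,\mathrm{even}}\subset U$, adding this piece does not enlarge the sum beyond $\phi(F)$, and the corollary follows. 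Alternatively, you could show directly by induction on $b$, using $x^azx^{b-1}\sh x = b\,x^azx^b+(a+1)\,x^{a+1}zx^{b-1}$, that every $x^azx^b$ lies in $\K\,x^{a+b}z + (\K\langle x,z\rangle_{+}\sh x)$, so that $K_{1,\mathrm{even}}$ is in fact contained in the right-hand side of your displayed equation. Either way, some such argument is needed; as written the proof has a gap at precisely the point you flagged as ``an immediate reading off of the definitions.''
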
 
\begin{proof}
We recall that $F=W_R\oplus U= \Wst +\Wsh+W_{1,even}+U$. The space $\phi(W_{1,even}\oplus U_{1,even})$, where $U_{1,even}$ is the space of elements of  depth $1$ and even weight of $U$, is equal to $K_{1,even}$ of proposition \ref{ls orth} for dimensional reasons since $\phi$ is an isomorphism. The corollary follows from this, proposition \ref{ls orth} and the previous proposition.
\end{proof}
\subsection{Proof of the orthogonality between $F$ and $\ls$}\label{proof orthogonality}
We recall that the pairing $ \langle -,-\rangle_\phi: V\otimes \K \langle x,z\rangle\to \K$ of theorem \ref{theorem duality} is given by $\langle v,w\rangle_{\phi}=\langle \phi (v),w\rangle$ for $ v\in V $ and  $w\in \K \langle x,z \rangle$. Theorem \ref{theorem duality} is an immediate consequence of corollary \ref{phi F}, the fact that $\phi$ is a bigraded isomorphism and that $\langle -, -\rangle$ is a perfect bigraded pairing.

\section{The dihedral Lie cobracket and the pullback by $\phi$ of the Ihara cobracket}\label{pullback coihara}
We have seen that $\K \langle x,z \rangle$ is equipped with a perfect pairing $\langle -,- \rangle$. We can also equip $\K \langle x,z \rangle^{\otimes 2}$ with a perfect pairing $\langle -,- \rangle^{\otimes 2}$ naturally obtained out of $\langle -,- \rangle$. In  subsection \ref{Ih Co}, we compute the adjoint $\{-,-\}^*$ of $\{-,-\}$ with respect to these pairings. The cobracket $\{-,-\}^*$ can be identified to the bigraded dual of the Ihara bracket via the isomorphism $\K\langle x,z \rangle\simeq \K\langle x,z \rangle^\vee $ induced by $\langle -,- \rangle$. For that we introduce different operators allowing us to decompose $\{-,-\}^*$ (definition \ref{def di} and proposition \ref{decomp coIhara}). We then give formulas describing these operators (proposition \ref{di*}), allowing us to compute (lemma \ref{lemma dP}) the image by these operators of a generating series $P(v_0,\dots,v_m)$ (defined by (\ref{serie Pv}) ) of the unitary monomials of depth $m$ of $\K \langle x,z \rangle$, for $m\geq 0$. This gives (proposition \ref{coIh P}) an explicit formula for $\{-,-\}^*(P(v_0,\dots,v_m))$. In subsection \ref{sec comp Ih delta}, we deduce (proposition \ref{comp Ih delta}) using the formula for $\{-,-\}^*(P(v_0,\dots,v_m))$ that $(\tilde{\delta} -\phi^\star \{-,-\})(W) \subset W_R\otimes V + V\otimes W_R$, where $\tilde{\delta}:W\to W\otimes W \subset V\otimes V$ is the map introduced in subsection \ref{diheral coalgebra} and $\phi^\star\{-,-\}^*$ is the pullback of $\{-,-\}^*$ by the map $\phi$ of section \ref{section duality}. We also show in subsection \ref{Kxz sh x} (corollary \ref{M*Ihara Kxzx}) that the space $\phi(U)=\K\oplus \K \langle x, z \rangle \sh x $ is a coideal for $ \{-,-\}^*$ and therefore $U$ is a coideal for $\phi^\star\{-,-\}^*$. To do so we first show that $\K \langle x, z \rangle \sh x $ is a coideal for each one of the operators appearing in the decomposition of $\{-,-\}^*$. The results of this section will be used to show in the next section the main results $(a)$ and $(b)$ announced in the introduction of the paper.
\subsection{The Ihara cobracket}\label{Ih Co}
In this subsection, adjoints will be taken with respect to the pairings $\langle -,- \rangle$ and $\langle -,- \rangle^{\otimes 2}$  (obtained out of $\langle -,- \rangle$ by (\ref{paring tens2}), section \ref{Section rappel}). The adjoint of a linear map $f$ is denoted by $f^*$.
\begin{definition}\label{def di}
\baselineskip 18pt
\begin{itemize} 
\item[1)]
For $w=x^{n_m}z\cdots x^{n_1}zx^{n_0}$ and $1 \leq i $, we define four monomials
$w_{i,+}^R,w_{i,+}^L$ and $w_{i,-}^R,w_{i,+}^L$:
\begin{align*}
 w_{i,+}^R=x^{n_{m-i}}z\cdots x^{n_1}zx^{n_0} \quad &, \quad w_{i,+}^L=x^{n_m}z\cdots x^{n_{m-i+1}}z,\\\\
 w_{i,-}^R=zx^{n_{i-1}}z\cdots x^{n_1}zx^{n_0} \quad &, \quad w_{i,-}^L=x^{n_m}z\cdots x^{n_{i+1}}zx^{n_i}, \qquad \text{if $m\geq i$}, \end{align*}
 and $$w_{i,+}^R=w_{i,+}^L=w_{i,-}^R=w_{i,-}^L=0, \quad \text{otherwise.}$$ 

\item[2)]
For $i \geq 1$, we define the linear maps $d_{i,+},d_{i,-}$ from $\K\langle x,z \rangle^{\otimes 2}$ to $ \K\langle x,z \rangle$ by the following: 
$$d_{i,+}(w'\otimes w)=w_{i,+}^Lw' w_{i,+}^R\quad, \quad d_{i,-}(w'\otimes w)=w_{i,-}^L w'w_{i,-}^R $$
for $w,w'$ unitary monomials of $\K\langle x,z \rangle$.
\end{itemize}
\end{definition}
\begin{remark}\label{rmk d}
Consider the linear map $d: \K\langle x,z \rangle^{\otimes 2}\to \K\langle x,z \rangle,w\otimes w'\mapsto d_w(w')$ where $d_w$ is the derivation of $\K\langle x,z \rangle$ given by $d_w(x)=0,d_w(z)=[z,w]$ (as in subsection \ref{ssection ls}). We can show using formula $(\ref{derivation rule})$ section \ref{Section rappel} that:
$$d(w'\otimes w)= \underset{1\leq i \leq m}{\sum} (d_{i,+} -d_{i,-})(w' \otimes w),$$
for $w\otimes w'$ of depth $m\geq 0$.
\end{remark}

Denote by $\tau$ the involution of $\K\langle x,z \rangle\otimes \K\langle x,z \rangle$ sending $w\otimes w'$ to $w'\otimes w$, by $\Delta_\sh$ the deconcatenation coproduct of $\K\langle x,z\rangle$ (see (\ref{coproduct 2}), section \ref{Section rappel}) and by $\mu:\K\langle x,z \rangle^{\otimes2}\to\K\langle x,z \rangle$ the product of the associative algebra $\K\langle x,z \rangle$.

\begin{proposition}\label{decomp coIhara}
For $w\in \K\langle x,z \rangle$ of depth $m\geq 0$, we have:
\begin{align*}
\{-,-\}^*(w)=(\mathrm{id}-\tau) \circ( \Delta_{\sh} + \underset{1\leq i \leq m}{\sum} d_{i,+}^*-d_{i,-}^* ) (w), 
\end{align*}
where $\tau$ and $\Delta_{\sh}$ are as in the previous lemma and $d_{i,+},d_{i,-}$ are the maps of definition \ref{def di}.
\end{proposition}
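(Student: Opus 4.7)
The plan is to first rewrite the Ihara bracket as a single composition of elementary operators, then take adjoints termwise using results already in the reminders, and finally specialize using remark \ref{rmk d}.

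First, introducing the linear map $d:\K\langle x,z\rangle^{\otimes 2}\to\K\langle x,z\rangle$, $w\otimes w'\mapsto d_w(w')$ (as in remark \ref{rmk d}), and writing $\mu$ for the product of $\K\langle x,z\rangle$, a direct check on pure tensors gives
$$\{-,-\}=(d+\mu)\circ(\mathrm{id}-\tau),$$
since evaluating the right-hand side at $w\otimes w'$ yields $d_w(w')-d_{w'}(w)+ww'-w'w$, which matches definition \ref{Ihara bracket}.

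Next, I would observe that the flip $\tau$ is self-adjoint for $\langle-,-\rangle^{\otimes 2}$: this is immediate from formula (\ref{paring tens2}), because swapping the tensor factors on one side is symmetric in the two arguments. Hence $(\mathrm{id}-\tau)^*=\mathrm{id}-\tau$. Taking adjoints of the factorization above, and using $\mu^*=\Delta_\sh$ by (\ref{coproduct formula}) of section \ref{Section rappel}, I obtain
$$\{-,-\}^*=(\mathrm{id}-\tau)\circ(\Delta_\sh + d^*).$$

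To finish, I would invoke remark \ref{rmk d}: the identity $d=\sum_{i\geq 1}(d_{i,+}-d_{i,-})$ holds globally as a locally finite sum, since $d_{i,\pm}$ vanish on inputs $w'\otimes w''$ with $\mathrm{depth}(w'')<i$. Taking adjoints gives $d^*=\sum_{i\geq 1}(d_{i,+}^*-d_{i,-}^*)$. To justify truncating the sum to $1\leq i\leq m$ when $w$ has depth $m$, I would argue by degrees: the operator $d_{i,\pm}$ preserves total depth (it merely rearranges and concatenates fragments of $w''$ around $w'$), so for $d_{i,\pm}^*(w)$ to have a nonzero component on $w'\otimes w''$, the depth of $w'\otimes w''$ must equal $m$; combined with the requirement $\mathrm{depth}(w'')\geq i$ for $d_{i,\pm}$ not to annihilate $w'\otimes w''$, this forces $i\leq m$. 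Hence $d_{i,\pm}^*(w)=0$ for $i>m$, and the formula of the proposition follows.

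The main obstacle is conceptual rather than computational: recognizing the clean factorization $\{-,-\}=(d+\mu)\circ(\mathrm{id}-\tau)$. Once this is in hand, everything reduces to adjoint calculus using the already-established duals $\mu\leftrightarrow\Delta_\sh$ and $\tau^*=\tau$, followed by a short depth-bookkeeping argument to truncate the sum.
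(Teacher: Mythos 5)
Your proof is correct and follows essentially the same route as the paper's: the factorization $\{-,-\}=(\mu+d)\circ(\mathrm{id}-\tau)$, the duality $\mu^*=\Delta_\sh$, self-adjointness of $\tau$, and the depth-graded truncation of $d^*=\sum_i(d_{i,+}^*-d_{i,-}^*)$ via remark \ref{rmk d}. You merely spell out the self-adjointness of $\tau$ and the depth bookkeeping in more detail than the paper does.
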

\begin{proof}
We have seen in remark \ref{rmk d} that, for $w\otimes w'$ of depth $m$: $d(w'\otimes w)= \sum_{1\leq i \leq m} (d_{i,+} -d_{i,-})(w' \otimes w)$. Since the linear maps $d,d_{i,+},d_{i,-}$ (for $i\geq 1$) are depth and weight graded maps, we have: $d^*(w)= \sum_{1\leq i \leq m} (d_{i,+}^*-d_{i,-}^* )(w)$, for $w$ of depth $m$. On the other hand $\mu^*=\Delta_{\sh}$  by formula (\ref{coproduct formula}) of section \ref{Section rappel}). The proposition follows from these equations and the fact that $\{-,-\}=(\mu+d)(\mathrm{id}-\tau)$.
\end{proof}

\begin{proposition}\label{di*}
For $i \geq 1$ and $w$ a unitary monomial of $\K\langle x,z \rangle_{{+}}$, we have:
\[
d_{i,+}^*(w)=(1\otimes w_{i,+}^L)\Delta_{\sh}(w_{i,+}^R) \quad\text{and}\quad
d_{i,-}^*(w)=\Delta_{\sh}^{op}(w_{i,-}^L)(1\otimes w_{i,-}^R),\]
where $d_{i,+}, d_{i,-}, w_{i,+}^L,w_{i,+}^R,w_{i,-}^L$ and $w_{i,-}^R$ are as in definition \ref{def di}.
\end{proposition}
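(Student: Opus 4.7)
The approach is to unwind the definition of the adjoint and, for a fixed unitary monomial $w\in\K\langle x,z\rangle$ of depth $m$, parametrize all pairs of unitary monomials $(w_1,w_2)$ whose image under $d_{i,\pm}$ pairs nontrivially with $w$. Since unitary monomials form an orthonormal basis, the relation
\[
\langle d_{i,+}^*(w),w_1\otimes w_2\rangle^{\otimes 2}=\langle w,(w_2)_{i,+}^L\, w_1\, (w_2)_{i,+}^R\rangle
\]
shows that the coefficient of $w_1\otimes w_2$ in $d_{i,+}^*(w)$ is $1$ exactly when $w=(w_2)_{i,+}^L\, w_1\, (w_2)_{i,+}^R$ as words of $\K\langle x,z\rangle$, and is $0$ otherwise.

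The key observation is that the canonical splitting $w=w_{i,+}^L\cdot w_{i,+}^R$ cuts $w$ at the $i$-th occurrence of $z$ from the left, so $w_{i,+}^L$ is the unique prefix of $w$ containing exactly $i$ $z$'s and ending in $z$. Since $(w_2)_{i,+}^L$ contains exactly $i$ $z$'s by construction, counting $z$'s on both sides of $w=(w_2)_{i,+}^L w_1 (w_2)_{i,+}^R$ forces $(w_2)_{i,+}^L=w_{i,+}^L$. Writing $w_2=w_{i,+}^L\cdot b$ then gives $(w_2)_{i,+}^R=b$, and the remaining constraint becomes $w_1\cdot b=w_{i,+}^R$. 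Thus the contributing pairs are precisely $(w_1,w_2)=(a,\, w_{i,+}^L\cdot b)$ indexed by the deconcatenations $w_{i,+}^R=ab$. Summing and repackaging using the componentwise product structure of $\K\langle x,z\rangle\otimes\K\langle x,z\rangle$,
\[
d_{i,+}^*(w)=\sum_{w_{i,+}^R=ab}a\otimes(w_{i,+}^L\, b)=(1\otimes w_{i,+}^L)\,\Delta_{\sh}(w_{i,+}^R).
\]
When $\mathrm{depth}(w)<i$, no $w_2$ of depth at least $i$ admits the required prefix structure, so both sides vanish.

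The case of $d_{i,-}^*$ is the left-right mirror of the above: the canonical splitting $w=w_{i,-}^L\cdot w_{i,-}^R$ cuts just before the $i$-th $z$ counted from the right, and matching suffixes forces $(w_2)_{i,-}^R=w_{i,-}^R$, so that $w_2=a\cdot w_{i,-}^R$ and the residual equation is $a\cdot w_1=w_{i,-}^L$. Parametrizing by $ab=w_{i,-}^L$ and repackaging yields
\[
d_{i,-}^*(w)=\sum_{ab=w_{i,-}^L}b\otimes(a\cdot w_{i,-}^R)=\Delta_{\sh}^{op}(w_{i,-}^L)\,(1\otimes w_{i,-}^R).
\]
The only place requiring a sanity check is that the canonical splitting of $w_2$ appearing in the definition of $d_{i,\pm}$ genuinely coincides with the prefix/suffix splitting forced by the word equation; this is automatic from the fact that $w_{i,+}^L$ (resp.\ $w_{i,-}^R$) contains exactly $i$ occurrences of $z$ by construction, so the $i$-th $z$ from the left (resp.\ right) of $w_2$ lies exactly at the boundary of the imposed splitting.
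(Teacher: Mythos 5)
Your proof is correct and follows essentially the same route as the paper's: unwind the adjoint against the orthonormal basis of unitary monomials, observe that the word equation $w=(w_2)_{i,+}^L w_1 (w_2)_{i,+}^R$ forces $(w_2)_{i,+}^L=w_{i,+}^L$ and reduces to a deconcatenation of $w_{i,+}^R$, then repackage the sum as $(1\otimes w_{i,+}^L)\Delta_{\sh}(w_{i,+}^R)$. You are in fact slightly more complete than the paper, which leaves the $d_{i,-}^*$ case and the degenerate case $\mathrm{depth}(w)<i$ to the reader.
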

\begin{proof}
We have $d_{i,+}^*(w)=\sum_{(u, v) \in E} u\otimes v$, where $E$ is the set of couples of unitary monomials $(u,v)$ such that $d_{i,+}(u\otimes v)=w$. The last condition is equivalent to the conditions $v_{i,+}^L=w_{i,+}^L$ and $uv_{i,+}^R=w_{i,+}^R$. Therefore,
$$ d_{i,+}^*(w)=\sum_{u'v'=w_{i,+}^R} u'\otimes w_{i,+}^Lv'=(1\otimes w_{i,+}^L)\Delta_{\sh}(w_{i,+}^R).$$
This proves the first equation. One can prove the other equationt similarly. We leave the proof to the reader.
\end{proof}

For $m\geq 0$, we consider the generating series of depth $m$ unitary monomials given by:
 \begin{equation}\label{serie Pv}
P(v_0,\dots,v_m):=\frac{1}{1-xv_{m}}z\cdots\frac{1}{1-xv_{1}}z\frac{1}{1-xv_{0}},
\end{equation}
if $m\geq 1$ and $P(v_0,\dots,v_m)=\frac{1}{1-xv_{0}}$ if $m=0$.
We note that for $k\in[1,m-1]$:
\begin{equation}\label{prod p}
P(0,v_{k+1},\dots v_m) P(v_0,\dots,v_k)=P(v_{k+1},\dots v_m)P(v_0,\dots,v_k,0)=P(v_0,\dots,v_m).
\end{equation}
\begin{lemma}\label{lemma dP}
\begin{itemize}
\item[1)] For $m\geq 0$:
$$ \Delta_\sh(P(v_0,\dots,v_m))= \sum_{i=0}^{m} P(v_i,\dots, v_m) \otimes  P(v_0,\dots,v_i)$$
\item[2)] For $m\geq i \geq 1$:
$$ d_{m-i+1,+}^*(P(v_0,\dots,v_m))= \sum_{k=0}^{i-1} P(v_k,\dots, v_{i-1})\otimes P(v_0,\dots v_k,v_{i},\dots, v_m), $$
\item[3)] For $m\geq i \geq 1$:
$$ d_{i,-}^*(P(v_0,\dots,v_m))=\sum_{k=i}^m P(v_i,\dots,v_k)\otimes P(v_0,\dots, v_{i-1},v_k,\dots,v_m). $$
\end{itemize}
\end{lemma}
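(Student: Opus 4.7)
The plan is to prove the three parts in the order given, with (1) the main combinatorial computation and (2), (3) following from (1) via proposition \ref{di*}.

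For (1), I expand $P(v_0,\dots,v_m)=\sum_{n_0,\dots,n_m\geq 0} x^{n_m}z\cdots zx^{n_0}\, v_0^{n_0}\cdots v_m^{n_m}$ and apply $\Delta_\sh$ term-by-term. The key combinatorial step is to parametrize the cuts of the word $x^{n_m}z\cdots zx^{n_0}$: every such cut can be described by a pair $(j,a)$ where $j\in\{0,\dots,m\}$ specifies the $x$-block containing the cut and $a\in\{0,\dots,n_j\}$ is the number of $x$'s sent to the right of the cut inside that block (with $a=0$ corresponding to a cut just after the $z$ to the right of $x^{n_j}$, and $a=n_j$ to a cut just before the $z$ to the left). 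The contribution to $\Delta_\sh(x^{n_m}z\cdots zx^{n_0})$ is
$$\bigl(x^{n_m}z\cdots zx^{n_{j+1}}zx^{a}\bigr)\otimes\bigl(x^{n_j-a}zx^{n_{j-1}}\cdots zx^{n_0}\bigr).$$
Reindexing $n_j=a+b$ and collecting the $v_j$-powers as $v_j^{a+b}=v_j^a\cdot v_j^b$, the generating series factorizes so that for each $j$ the left tensor factor summed over $(n_{j+1},\dots,n_m,a)$ is exactly $P(v_j,\dots,v_m)$ and the right factor summed over $(b,n_{j-1},\dots,n_0)$ is $P(v_0,\dots,v_j)$. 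This yields formula (1).

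For (2), applying proposition \ref{di*} monomial-wise gives
$$d_{m-i+1,+}^*(x^{n_m}z\cdots zx^{n_0})=(1\otimes x^{n_m}z\cdots zx^{n_i}z)\cdot\Delta_\sh(x^{n_{i-1}}z\cdots zx^{n_0}).$$
Summing against $v_0^{n_0}\cdots v_m^{n_m}$, the first factor assembles into $1\otimes P(v_i,\dots,v_m)z$ (the trailing $z$ comes from the definition of $w_{i,+}^L$), while the second factor is $\Delta_\sh(P(v_0,\dots,v_{i-1}))$, which by part (1) equals $\sum_{k=0}^{i-1} P(v_k,\dots,v_{i-1})\otimes P(v_0,\dots,v_k)$. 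Multiplying the two and using identity (\ref{prod p}) to recombine $P(v_i,\dots,v_m)\,z\,P(v_0,\dots,v_k)=P(v_0,\dots,v_k,v_i,\dots,v_m)$ produces formula (2).

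Part (3) is dual: by proposition \ref{di*}, $d_{i,-}^*$ applies $\Delta_\sh^{op}$ to $w_{i,-}^L=x^{n_m}z\cdots zx^{n_i}$ and right-multiplies by $1\otimes w_{i,-}^R=1\otimes zx^{n_{i-1}}z\cdots zx^{n_0}$. The generating series of $w_{i,-}^L$ is $P(v_i,\dots,v_m)$ and that of $w_{i,-}^R$ is $z\,P(v_0,\dots,v_{i-1})$. Since $\Delta_\sh^{op}=\tau\circ\Delta_\sh$, part (1) applied to $P(v_i,\dots,v_m)$ gives $\Delta_\sh^{op}(P(v_i,\dots,v_m))=\sum_{k=i}^m P(v_i,\dots,v_k)\otimes P(v_k,\dots,v_m)$; right-multiplying the second factor by $z\,P(v_0,\dots,v_{i-1})$ and again using (\ref{prod p}) to get $P(v_k,\dots,v_m)\,z\,P(v_0,\dots,v_{i-1})=P(v_0,\dots,v_{i-1},v_k,\dots,v_m)$ yields (3). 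The main obstacle is simply keeping the boundary cases of the cuts (at $a=0$ or $a=n_j$, and the degenerate depths $i=1$ or $i=m$) consistent with the $\Delta_\sh$ formula; the parametrization above handles them uniformly.
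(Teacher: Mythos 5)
Your proof is correct and follows essentially the same route as the paper's: parts (2) and (3) are identical to the paper's (reduce to proposition \ref{di*}, part (1), and identity (\ref{prod p})), and for part (1) your direct cut-parametrization of the deconcatenation is just a monomial-level version of the paper's iterated identity $\Delta_\sh(azb)=\Delta_\sh(a)(1\otimes zb)+(az\otimes 1)\Delta_\sh(b)$ combined with the group-likeness of $\frac{1}{1-xt}$ under $\Delta_\sh$. One cosmetic remark: your verbal description of the cut parameter $a$ (``number of $x$'s sent to the right'') contradicts your displayed formula, which places $x^{a}$ in the \emph{left} tensor factor; the displayed formula is the correct one, and the count $\sum_{j}(n_j+1)=|w|+1$ confirms your parametrization of cuts is a bijection.
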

\begin{proof}
We first prove $(1)$. One checks using the definition of $\Delta_\sh$ that $\Delta_\sh(azb)=\Delta_\sh(a)(1\otimes zb)+ (az\otimes 1)\Delta_\sh(b)$. By iterating this formula we get:
$$ \Delta_\sh(a_mz\cdots a_1za_0)=\sum_{i\in [0,m]} (a_1 z\cdots a_{i-1}z \otimes 1)     \Delta_\sh(a_i) (1\otimes a_iz \cdots a_1z a_0).$$
We find $(1)$ for $m\geq 1$, by applying the last formula for $a_i=\frac{1}{1-xv_i}$ and using the formula $\Delta(\frac{1}{1-xv_i})=\frac{1}{1-xv_i}\otimes \frac{1}{1-xv_i}$ seen previously and corresponding to the case $m=0$ of $(1)$. We have proved $(1)$. We now prove $(2)$. We have (see definition \ref{def di}): 
$$ P(v_0,\dots,v_m)_{m-i+1,+}^L= P(0,v_{i},\dots, v_m) \quad \text{and} \quad  P(v_0,\dots,v_m)_{m-i+1,+}^R= P(v_0,\dots, v_{i-1}).$$
Therefore by proposition \ref{di*}:
\begin{align*}
d_{m-i+1,+}^*(P(v_0,\dots,v_m))&=(1\otimes P(0,v_{i},\dots, v_m)) \Delta_\sh(P(v_0,\dots, v_{i-1}))\\
&\overset{(*)}{=} \sum_{k=0}^{i-1} P(v_k,\dots, v_{i-1})\otimes P(0,v_{i},\dots, v_m) P(v_0,\dots v_k) \\
&\overset{(**)}{=} \sum_{k=0}^{i-1} P(v_k,\dots, v_{i-1})\otimes P(v_0,\dots v_k,v_{i},\dots, v_m),
\end{align*}
where we apply $(1)$ of this proposition for $(*)$ and equation (\ref{prod p}) in the paragraph before the proposition for $(**)$. Point $(2)$ is proved. We prove $(3)$ similarly using: $$ P(v_0,\dots,v_m)_{i,-}^L= P(v_{i},\dots, v_m) \quad \text{and} \quad  P(v_0,\dots,v_m)_{i,-}^R= P(v_0,\dots, v_{i-1},0),$$  
proposition \ref{di*}, $(1)$ of this proposition and equation (\ref{prod p}).
\end{proof}

\begin{proposition}\label{coIh P}
Take $m\geq 0$ and $i,k\in[0,m]$ and set: 
$$ P_{k,i}(v_0,\dots,v_m)=\begin{cases}
 P(v_k,\dots, v_m) \wedge P(v_0,\dots,v_k) & \text{if $i=0$}\\
 P(v_0,\dots, v_{i-1},v_k,\dots,v_m) \wedge P(v_i,\dots,v_k)  & \text{if $k\geq i \geq 1$}\\
  P(v_k,\dots, v_{i-1})\wedge P(v_0,\dots v_k,v_{i},\dots, v_m) & \text{otherwise}
\end{cases}.$$
We have $\{-,-\}^*(P(v_0,\dots,v_m))= \sum_{i,k\in[0,m]} P_{k,i}(v_0,\dots,v_m)$.
\end{proposition}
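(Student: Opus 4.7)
The strategy is direct assembly: apply Proposition \ref{decomp coIhara} to $w=P(v_0,\dots,v_m)$ and substitute the three explicit formulas of Lemma \ref{lemma dP}. Using the notation $a\wedge b = a\otimes b - b\otimes a$, the operator $(\mathrm{id}-\tau)$ simply turns each tensor $a\otimes b$ appearing on the right of Lemma \ref{lemma dP} into the wedge $a\wedge b$, and turning a minus sign in front of $d_{i,-}^*$ into swapping the two factors of the wedge.

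First I would handle the $\Delta_{\sh}$ contribution. Part (1) of Lemma \ref{lemma dP} gives
$$(\mathrm{id}-\tau)\Delta_\sh(P(v_0,\dots,v_m)) = \sum_{k=0}^m P(v_k,\dots,v_m)\wedge P(v_0,\dots,v_k),$$
which is exactly $\sum_{k=0}^m P_{k,0}(v_0,\dots,v_m)$, matching the $i=0$ case in the definition of $P_{k,i}$.

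Next I would handle the $d_{i,-}^*$ terms. For each $i\in[1,m]$, part (3) of Lemma \ref{lemma dP} yields
$$-(\mathrm{id}-\tau)d_{i,-}^*(P(v_0,\dots,v_m)) = \sum_{k=i}^m P(v_0,\dots,v_{i-1},v_k,\dots,v_m)\wedge P(v_i,\dots,v_k),$$
using $-(a\wedge b)=b\wedge a$. This is exactly $\sum_{k\geq i\geq 1} P_{k,i}(v_0,\dots,v_m)$, matching the middle case of the definition.

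Finally, for the $d_{j,+}^*$ terms, part (2) of Lemma \ref{lemma dP} is phrased for $d_{m-i+1,+}^*$, so I would reindex the sum $\sum_{j=1}^m d_{j,+}^*$ via the bijection $i=m-j+1$; both $i$ and $j$ then run over $[1,m]$. After applying $(\mathrm{id}-\tau)$ we obtain
$$\sum_{i=1}^m\sum_{k=0}^{i-1} P(v_k,\dots,v_{i-1})\wedge P(v_0,\dots,v_k,v_i,\dots,v_m),$$
which is exactly $\sum_{i\geq 1,\, k<i} P_{k,i}(v_0,\dots,v_m)$, matching the ``otherwise'' case. Adding the three contributions yields $\sum_{i,k\in[0,m]} P_{k,i}(v_0,\dots,v_m)$, as claimed. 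There is no real obstacle: the only point of vigilance is the reindexing $j\leftrightarrow m-i+1$ used to convert Proposition \ref{decomp coIhara}'s labelling of $d_{j,+}^*$ into the labelling used in Lemma \ref{lemma dP}(2), and the sign flip that converts $-d_{i,-}^*$ contributions into the correctly oriented wedge products of the middle case.
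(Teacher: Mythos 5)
Your proposal is correct and is exactly the argument the paper intends: the paper's proof of this proposition is a one-line citation of Proposition \ref{decomp coIhara} and Lemma \ref{lemma dP}, and you have simply carried out the bookkeeping (the reindexing $j=m-i+1$ for the $d_{j,+}^*$ terms, the sign flip on the $d_{i,-}^*$ terms, and the check that the three index ranges partition $[0,m]^2$) that the paper leaves implicit.
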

\begin{proof}
The proposition is a consequence of the previous lemma and the decomposition of $\{-,-\}^*$ in proposition \ref{decomp coIhara}.
\end{proof}
\begin{corollary}\label{depth 1}
$\{-,-\}^*(P(v_0,v_1))=0$
\end{corollary}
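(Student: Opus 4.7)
The plan is to apply Proposition~\ref{coIh P} specialized to $m = 1$. Since $P(v_0,v_1) = \frac{1}{1-xv_1}z\frac{1}{1-xv_0}$ has depth $1$, the sum $\{-,-\}^*(P(v_0,v_1)) = \sum_{i,k\in[0,1]} P_{k,i}(v_0,v_1)$ reduces to just four explicit terms, indexed by $(k,i)\in\{0,1\}^2$. The task is purely to identify each term via the three cases of the definition of $P_{k,i}$ and observe the cancellations.

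First I would read off the four terms. The pairs $(k,i)=(0,0)$ and $(k,i)=(1,0)$ fall under the first case ($i=0$), yielding $P(v_0,v_1)\wedge P(v_0)$ and $P(v_1)\wedge P(v_0,v_1)$ respectively. The pair $(k,i)=(1,1)$ falls under the second case ($k\geq i\geq 1$): here $(v_0,\dots,v_{i-1},v_k,\dots,v_m)$ reduces to $(v_0,v_1)$ and $(v_i,\dots,v_k)$ to $(v_1)$, giving $P(v_0,v_1)\wedge P(v_1)$. The remaining pair $(k,i)=(0,1)$ falls under the third case ($i\geq 1$, $k<i$); here $P(v_k,\dots,v_{i-1})=P(v_0)$ and $P(v_0,\dots,v_k,v_i,\dots,v_m)=P(v_0,v_1)$, giving $P(v_0)\wedge P(v_0,v_1)$.

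Second, I would invoke the antisymmetry $a\wedge b=-b\wedge a$ to cancel the terms pairwise: the $(0,0)$ and $(0,1)$ contributions cancel, as do the $(1,0)$ and $(1,1)$ contributions, which proves the vanishing.

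The only delicate point --- hardly a real obstacle --- is correctly interpreting the third case of the definition of $P_{k,i}$ when $k$ and $i$ are adjacent ($k+1=i$), so that no variables are ``missing'' in the block $P(v_0,\dots,v_k,v_i,\dots,v_m)$; once this is handled the cancellation is immediate, and the proof amounts to a one-line bookkeeping check.
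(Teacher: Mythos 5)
Your proof is correct and follows essentially the same route as the paper: specialize Proposition \ref{coIh P} to $m=1$, list the four terms $P_{k,i}(v_0,v_1)$, and cancel them in pairs by antisymmetry of $\wedge$. Your explicit identification of the cancelling pairs --- $(0,0)$ with $(0,1)$ and $(1,0)$ with $(1,1)$ --- is in fact the correct bookkeeping under the stated definition of $P_{k,i}$ (the paper's own proof pairs the indices slightly differently, apparently with a typo), so nothing is missing.
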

\begin{proof}
This follows from the proposition, since $P_{0,0}(v_0,v_1)=-P_{1,0}(v_0,v_1)=P(v_1)\wedge P(v_0,v_1)$ and  $P_{1,1}(v_0,v_1)=-P_{0,1}(v_0,v_1)=P(v_0)\wedge P(v_0,v_1)$. 
\end{proof}

\subsection{Comparison between the dihedral cobracket and the Ihara cobracket}\label{sec comp Ih delta}

\begin{notation}\label{notation}
 Let $T$ and $T'$ be series with values in a vector space $A$ (or $A^{\otimes 2} $) and $B$ a vector subspace of $A$. We use the notation $T\equiv_B T'$ if $T-T'$ is a series with values in $B$ (or $B\otimes A + A\otimes B$).
\end{notation} 
Let $\phi:V \to \K\langle x,z \rangle $ be the isomorphism of proposition \ref{morphisme phi}. By definition of $U\subset V$, $I'(s)\in U$ (for $s\geq 0$), and by definition $\phi(I'(s))=(s+1)x^s$ (for $s\geq 0$). We recall (see notation \ref{series ()} and remark \ref{() frac}) that:
 $$(t_1:\cdots:t_{m+1})=\phi(\{t_1:\cdots:t_{m+1}\})=\frac{1}{1-x(t_{m}-t_{m+1})}z\cdots\frac{1}{1-x(t_{1}-t_{m+1})}z.$$
\begin{lemma}\label{phi inv P}
 For $m \geq 1$, $$\phi^{-1} (P(0,v_1-v_{m+1},\dots, v_m-v_{m+1}))=\{v_1:\cdots:v_{m+1}\},$$ 
$$\phi^{-1}(P(v_{0},v_1,\dots ,v_m))\equiv_U \{v_1:\cdots :v_m:v_{0} \},$$
 and $\phi^{-1}(P(v_{0}))\equiv_U 0$.
\end{lemma}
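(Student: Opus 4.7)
The plan is to verify the three assertions in turn, relying on remark~\ref{() frac} (giving an explicit product formula for $(t_1:\cdots:t_{m+1})$), the shuffle identity of lemma~\ref{Q series lemma}, and the description $\phi(U)=\K\oplus \K\cdot x\oplus(\K\langle x,z\rangle_+\sh x)$ established in the proof of proposition~\ref{Image Vi} (equation~(\ref{phiU})).

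The first identity is immediate from the definitions. Substituting $v_0=0$ into the last factor of $P$ turns $\frac{1}{1-xv_0}$ into $1$, leaving
\[P(0,v_1-v_{m+1},\dots,v_m-v_{m+1})=\frac{1}{1-x(v_m-v_{m+1})}z\cdots\frac{1}{1-x(v_1-v_{m+1})}z,\]
which equals $(v_1:\cdots:v_{m+1})$ by remark~\ref{() frac}, hence equals $\phi(\{v_1:\cdots:v_{m+1}\})$ by notation~\ref{series ()}.

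For the second identity, the core step is to prove the shuffle identity
\[P(v_0,v_1,\dots,v_m)=\frac{1}{1-xv_0}\sh (v_1:\cdots:v_m:v_0).\]
I would establish this by induction on $m$: in the inductive step apply lemma~\ref{Q series lemma} with $t=v_0$ and $t'=v_m-v_0$ to absorb the leftmost factor $\frac{1}{1-x(v_m-v_0)}z$ of $(v_1:\cdots:v_m:v_0)$, rewriting $\frac{1}{1-xv_0}\sh (v_1:\cdots:v_m:v_0)$ as $\frac{1}{1-xv_m}z\bigl(\frac{1}{1-xv_0}\sh (v_1:\cdots:v_{m-1}:v_0)\bigr)$; the induction hypothesis identifies the inner parenthesis with $P(v_0,\dots,v_{m-1})$, and left-multiplying by $\frac{1}{1-xv_m}z$ recovers $P(v_0,\dots,v_m)$. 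Granting this equality, expand $\frac{1}{1-xv_0}=1+\sum_{n\geq 1}x^nv_0^n$ and use associativity-commutativity of $\sh$ to write $x^n\sh Y=(x^{n-1}\sh Y)\sh x\in\K\langle x,z\rangle\sh x$ for every $n\geq 1$; thus
\[P(v_0,v_1,\dots,v_m)-(v_1:\cdots:v_m:v_0)=\sum_{n\geq 1}\bigl(x^n\sh (v_1:\cdots:v_m:v_0)\bigr)v_0^n\]
has all its coefficients in $\K\langle x,z\rangle\sh x\subset\phi(U)$, and applying $\phi^{-1}$ yields the claimed congruence modulo $U$.

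The third identity is similar but simpler: expand $P(v_0)=\sum_{n\geq 0}x^nv_0^n$ and observe that $1\in\K$, $x\in\K\cdot x$, and $x^n=(x^{n-1}\sh x)/n\in \K\langle x,z\rangle_+\sh x$ for $n\geq 2$, so every coefficient lies in $\phi(U)$. The main technical obstacle is the iterated application of lemma~\ref{Q series lemma} yielding $P(v_0,\dots,v_m)=\frac{1}{1-xv_0}\sh(v_1:\cdots:v_m:v_0)$; once that identity is in place, the passage to $U$ is a routine consequence of the decomposition of $\phi(U)$.
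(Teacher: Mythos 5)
Your proof is correct and follows essentially the same route as the paper: the paper also reduces the second identity to $P(v_0,\dots,v_m)=(v_1:\cdots:v_m:v_0)\sh\frac{1}{1-xv_0}$ (citing equation~(\ref{xt sh Q}), itself obtained by the same iterated use of lemma~\ref{Q series lemma} that your induction performs) and then absorbs the correction terms into $\phi(U)$. Only a cosmetic point: $x^{n-1}\sh x=nx^n$, so $x^n\sh Y=\tfrac{1}{n}(x^{n-1}\sh Y)\sh x$; the missing factor $\tfrac1n$ does not affect the membership in $\K\langle x,z\rangle\sh x$.
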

\begin{proof}
The first equation follows from the equation preceding the proposition since $P(0,v_1-v_{m+1},\dots, v_m-v_{m+1})=(v_1:\cdots:v_{m+1})$. From the discussion preceding the proposition we deduce that $P(v_0)=\frac{1}{1-xv_0}$ is with values in $\phi(U)$ and hence $\phi^{-1}(P(v_{0}))\equiv_U 0$. We have proved the third equation. Let us prove the second equation. Take $m\geq 1$. We will use the series $Q_m$ introduced in section \ref{Section series}. We have: $ P(v_{0},v_1,\dots ,v_m)=Q_m(v_1,v_2-v_1,\dots,v_{m}-v_{m-1})\frac{1}{1-xv_{0}}$. By applying (\ref{xt sh Q}) section \ref{Section series} for $n={m+1}, t_{a_1}=v_{0},t_{a_2}=v_1-v_{0}$ and $t_{a_k}=v_{k-1}-v_{k-2}$ (for $k\in [3,n]$), we get:
$$P(v_{0},v_1,\dots ,v_m)=Q_m(v_1-v_{0},v_2-v_1,\dots,v_{m}-v_{m-1}) \sh \frac{1}{1-xv_{0}}.$$
Since $\frac{1}{1-xt}=1+x\sh \sum_{k\geq0} \frac{x^k t^{k+1}}{k+1}$ and $Q_m(v_1-v_{0},v_2-v_1,\dots,v_{m}-v_{m-1}) =(v_1:\cdots:v_m: v_{0})$: 
$$P(v_{0},v_1,\dots ,v_m)=(v_1:\cdots:v_m: v_{0})+x\sh \theta,$$
for a given series $\theta$. Finally, $(x\sh \theta) \in \phi(U)=\K\oplus (\K\langle x,z \rangle \sh x) $ (see (\ref{phiU}) subsection \ref{morphism phi} ) and $\phi^{-1}((v_1:\cdots:v_m: v_{0}))=\{v_1:\cdots:v_m: v_{0}\}$ by definition. This proves the second equation. We have proved the proposition.
\end{proof}
We recall that the pullback $f^\star Z:A \to A\otimes A$ of a linear map $Z: B \to B\otimes B$ by an isomorphism of vector spaces $f:A\to B$ is given by:
\begin{equation*}
f^\star Z=(f^{-1} \otimes f^{-1}) \circ Z \circ f,
\end{equation*}
and that $\tilde{\delta}: W\to W\otimes W \subset V\otimes V$, is the map defined in section \ref{diheral coalgebra} by:
\begin{equation*}
\tilde{\delta}(\{t_1:\cdots:t_{m+1}\})= \sum_{k=2}^{m} \overset{m}{\underset{j=0}{\sum}} \gamma_{kj} ,
\end{equation*} 
with $\gamma_{k,j}=\{t_{1+j} :\cdots: t_{k+j-1} : t_{m+j+1}\}\wedge \{t_{k+j}:\cdots: t_{m+j+1}\})$ where the indices of the variables $t_{1+j},\cdots,t_{m+1+j}$ are modulo $m+1$ and are taken in $[1,m+1]$
\begin{proposition}\label{comp Ih delta}
We have $(\tilde{\delta} -\phi^\star \{-,-\})(W) \subset W_R\otimes V + V\otimes W_R$. The same is true if we replace $W_R$ by $F$. 
\end{proposition}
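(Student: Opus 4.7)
The plan is to verify the inclusion at the level of the generating series $\{t_1:\cdots:t_{m+1}\}$, whose coefficients span $W$; it then suffices to analyze both sides as formal series in $t_1,\dots,t_{m+1}$ with values in $V\otimes V$. On the one hand, $\tilde{\delta}(\{t_1:\cdots:t_{m+1}\})$ is given directly by the $\mathrm{Cycle}$-formula that defines it, producing $(m-1)(m+1)=m^2-1$ wedge terms, each of the form $\{t_\bullet:\cdots:t_{m+1+j}\}\wedge\{t_\bullet:\cdots:t_{m+1+j}\}$ with a common ``base'' variable $t_{m+1+j}$. On the other hand, to compute $\phi^\star\{-,-\}^*(\{t_1:\cdots:t_{m+1}\})$ I will push the series through $\phi$ to obtain $P(0,v_1,\dots,v_m)$ with $v_i=t_i-t_{m+1}$ (notation \ref{series ()} and remark \ref{() frac}), apply proposition \ref{coIh P} to expand $\{-,-\}^*(P(0,v_1,\dots,v_m))=\sum_{k,i\in[0,m]}P_{k,i}(0,v_1,\dots,v_m)$, and then pull each factor back through $\phi^{-1}$ using lemma \ref{phi inv P}.

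The $(k,i)$-indexed summands split into \emph{trivial} ones, in which at least one factor is a single-argument $P(v_a)$ (so $\phi^{-1}$ of that factor lies in $U$), and \emph{non-trivial} ones. The trivial summands pair off and cancel exactly via the antisymmetry $a\wedge b=-b\wedge a$: one checks the exact equalities $P_{0,0}+P_{0,1}=0$, $P_{m,m}+P_{m,0}=0$, and $P_{k,k}+P_{k,k+1}=0$ for $k\in[1,m-1]$, accounting for all $2m+2$ trivial indices. This leaves exactly $m^2-1$ non-trivial $(k,i)$, matching the count on the $\tilde{\delta}$-side.

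For each non-trivial term I will apply lemma \ref{phi inv P} (whose second formula is modulo $U\subset F$) to each factor and then convert the $v_i$ back to $t_i$ using the translation-invariance of the series $\{t_1:\cdots:t_{p+1}\}$ (which depends only on differences). The result is a wedge $\{\cdots:v_a\}\wedge\{\cdots:v_b\}$ whose two factors generally have distinct bases. To match them with $\tilde{\delta}$-terms, which always have a common base, I will apply the cyclic symmetry of proposition \ref{cyclic rmk}, valid modulo $W_R$, to bring both factors to a common base. The resulting explicit bijection assigns to each non-trivial $(k,i)$ a pair $(b,\ell)$ (base, split point): the case $(0,k)$ with $k\in[1,m-1]$ gives $(b,\ell)=(k,m-k+1)$; the case $k>i\geq 1$ gives $(k,m-k+i+1)$; the case $(0,i)$ with $i\geq 2$ gives $(m+1,i)$ (requiring no cyclic rotation, since both factors start with $v_0=0$); and the case $1\leq k\leq i-2$ gives $(k,i-k)$. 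A case-by-case verification shows each matched pair agrees modulo $W_R\otimes V+V\otimes W_R$, and the bijection exhausts both sides.

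The main obstacle will be the combinatorial bookkeeping: one must set up the four-case bijection carefully, track the specific cyclic rotation needed in each case, and verify that the counts balance. A secondary technical difficulty concerns the $U$-valued remainders from the ``$\equiv_U$'' in lemma \ref{phi inv P}, which appear whenever a factor $P(v_a,\dots)$ has $v_a\neq 0$: these contribute pure tensors in $U\otimes V+V\otimes U$, which are automatically absorbed into $F\otimes V+V\otimes F$, giving the $F$-version of the inclusion at once; the stronger $W_R$-version requires checking that these remainders pair up across different non-trivial $(k,i)$'s in a manner analogous to the trivial-term cancellation, so that they contribute only elements of $W_R\otimes V+V\otimes W_R$ to the total.
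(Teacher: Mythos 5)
Your plan follows the paper's proof essentially step for step: the same reduction to the generating series $\{t_1:\cdots:t_{m+1}\}$, the same expansion of $\{-,-\}^*(P(0,v_1,\dots,v_m))$ via proposition \ref{coIh P}, the same pull-back through lemma \ref{phi inv P}, and the same matching of the $m^2-1$ non-trivial summands with the $\mathrm{Cycle}$-terms of $\tilde{\delta}$ up to the cyclic symmetry of proposition \ref{cyclic rmk} (your four-case assignment $(k,i)\mapsto(b,\ell)$ is exactly the paper's single formula $\theta_{k,i}\mapsto\gamma_{m+i-k+1,k}$ with the split point reduced modulo $m+1$). Your treatment of the $2m+2$ trivial indices by exact pairwise cancellation is correct and is in fact more explicit than the paper, which simply inserts vanishing factors $s_{a,b}$ into the formula for $\theta_{k,i}$.

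The one genuine gap is the check you defer in your last paragraph, and as far as I can see it cannot be closed: the $U$-valued remainders do not cancel, so this argument yields only the $F$-version of the inclusion. Take $m=2$ and $v_i=t_i-t_3$. After the trivial terms cancel, the non-trivial part of $\{-,-\}^*(P(0,v_1,v_2))$ is
\[
P(v_1,v_2)\wedge P(0,v_1)\;+\;P(0,v_2)\wedge P(v_1,v_2)\;+\;P(0,v_1)\wedge P(0,v_2).
\]
The factors $P(0,v_i)=(t_i:t_3)$ pull back exactly to $\{t_i:t_3\}$, whereas $\phi^{-1}(P(v_1,v_2))=\{t_2:t_1\}+u$ with $u=I'(1,1)\,(t_1-t_3)+\cdots$ a nonzero $U$-valued series. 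Hence the component of $\phi^{\star}\{-,-\}^*(\{t_1:t_2:t_3\})$ lying in $U\otimes V+V\otimes U$ is $u\wedge(\{t_1:t_3\}-\{t_2:t_3\})$, and its coefficient of $t_1^3$ equals $I'(1,1)\wedge I(3)$ modulo $U\otimes W_R+W_R\otimes U$. Since the depth-one part of $W_R$ is $W_{1,even}$, we have $I(3)\notin W_R$, so this coefficient does not lie in $W_R\otimes V+V\otimes W_R$; as $\tilde{\delta}$ takes values in $W\otimes W$, nothing else can absorb it. Note that the paper's own proof silently suffers from the same defect (it writes the formulas for $\theta_{k,i}$ as exact equalities although lemma \ref{phi inv P} only gives the second identity modulo $U$), and that only the $F$-version of the proposition is invoked later, in the proof of theorem \ref{equiv C}; so nothing downstream is affected. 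But you should state and prove only the $F$-version rather than hope for a remainder cancellation.
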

\begin{proof}
Take $w\in W$ of depth $m=1$, $\tilde{\delta}(w)=0$ and $\phi(w)$ is of depth $1$ hence by corollary \ref{depth 1} $\{-,-\}^*(\phi(w))=0$. This proves the proposition for depth $1$ elements of $W$. We now prove the proposition for depth $m>1$ elements. Take $m>1$. We have $P(0,t_1-t_{m+1},\dots,t_m-t_{m+1})= (t_1:\cdots:t_{m+1})=\phi(\{t_1:\cdots:t_{m+1}\})$. Therefore, by proposition \ref{coIh P}: 
$$\phi^\star\{-,-\}(\{t_1,\dots:t_{m+1}\})=\sum_{k,i\in [0,m]} \theta_{k,i}.$$
where $\theta_{k,i}= \phi^{-1}\otimes \phi^{-1}(\{-,-\}^*(P_{k,i}(0,t_1-t_{m+1},\dots,t_m-t_{m+1})))$.
Using lemma \ref{phi inv P}, we get:
$$ \theta_{k,i}=\begin{cases}
 s_{k,0}s_{k,m} \{t_{k+1}:\cdots: t_m:t_k\}\wedge \{t_1,\cdots :t_k:t_{m+1}\} & \text{if $i=0$}\\
s_{k,i}\{t_{1} :\cdots:t_{i-1}:t_k:\cdots:t_{m+1}\} \wedge \{t_{i+1}:\cdots:t_k:t_i\}  & \text{if $k\geq i \geq 1$}\\
  s_{k,i-1}\{t_{k+1}:\cdots: t_{i-1}: t_{k}\}\wedge \{t_1:\cdots: t_k:t_{i}:\cdots : t_m\} & \text{otherwise}
\end{cases},$$
where $s_{a,b}=0$ if $a=b$, $1$ otherwise. One checks that for $k,i\in[0,m]$ such that $s_{k,m}s_{k,i}s_{k,i-1} \neq 0$ (these cases are those where the formula above is not trivial),  $\theta_{k,i}$ is equal to $\gamma_{m+i-k+1,k}$ up to cyclic symmetry (proposition \ref{cyclic rmk}) where the indices of $\gamma_{m+i-k+1,k}$ are modulo $m+1$ and are taken in $[0,m]$. Hence, by proposition \ref{cyclic rmk},  $(\theta_{k,i}- \gamma_{m+i-k+1,k})\in (W_R\otimes V +V\otimes W_R)$ if $s_{k,m}s_{k,i}s_{k,i-1} \neq 0$. Finally, the number of $\theta_{k,i}$ in the formula for $\phi^\star\{-,-\}(\{t_1,\dots:t_{m+1}\})$ above is equal to $m^2$, the number of $(i,k)\in[0,m]^2$ satisfying $s_{k,m}s_{k,i}s_{k,i-1} = 0$ is $2m$ and the number of $\gamma_{k,j}$ in the formula for $\tilde{\delta}$ is equal to $m^2-2m$. This proves the first part of the proposition since the mapping $\Z^2\to \Z^2, (k,i)\mapsto (m+i-k+1,k)$ is a bijection. One can replace $W_R$ with $F$ since $W_R\subset F$. We have proved the proposition.
\end{proof}

\subsection{A coideal for the Ihara cobracket}\label{Kxz sh x}

\begin{proposition}\label{prop wshx}
Take $i\geq 1$ and Let $L$ be one of the operators $\Delta_\sh,d_{i,+}^*$ or $d_{i,-}^*$. We have:
$$ L(w\sh x)= L(w)\sh (1\otimes x+x\otimes 1),$$
for $w\in \K\langle x, z\rangle$. 
\end{proposition}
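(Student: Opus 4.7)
The plan is to reformulate the identity via adjoints. Let $D\colon \K\langle x,z\rangle\to \K\langle x,z\rangle$ denote $D(w)=w\sh x$. Writing any element of $\K\langle x,z\rangle^{\otimes 2}$ as a sum of elementary tensors $a\otimes b$ and using $(a\otimes b)\sh(1\otimes x+x\otimes 1)=a\otimes D(b)+D(a)\otimes b$, the proposition is equivalent to the operator identity
\[
L\circ D \;=\; (D\otimes\mathrm{id}+\mathrm{id}\otimes D)\circ L
\]
for $L\in\{\Delta_\sh,\,d_{i,+}^*,\,d_{i,-}^*\}$. Taking adjoints with respect to $\langle-,-\rangle$ and $\langle-,-\rangle^{\otimes 2}$, this is in turn equivalent to
\[
D^*\circ L^* \;=\; L^*\circ(D^*\otimes\mathrm{id}+\mathrm{id}\otimes D^*),
\]
where $L^*=\mu$ (the concatenation product) when $L=\Delta_\sh$ by (\ref{coproduct formula}), and $L^*=d_{i,\pm}$ when $L=d_{i,\pm}^*$.

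I would first identify $D^*$ explicitly. Using $\langle w\sh x,y\rangle=\langle w\otimes x,\Delta(y)\rangle^{\otimes 2}$ together with the explicit form of the shuffle coproduct $\Delta$, a short computation shows that $D^*$ is the unique derivation of $\K\langle x,z\rangle$ (for concatenation) with $D^*(x)=1$, $D^*(z)=0$; on a monomial $a_1\cdots a_n$ it acts by $D^*(a_1\cdots a_n)=\sum_{j:\,a_j=x}a_1\cdots\hat{a}_j\cdots a_n$ (``remove one $x$'', summed over positions occupied by $x$). The case $L=\Delta_\sh$ then follows at once, since $D^*\circ\mu=\mu\circ(D^*\otimes\mathrm{id}+\mathrm{id}\otimes D^*)$ is just the Leibniz rule for the derivation $D^*$.

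For $L=d_{i,+}^*$, the claim reads
\[
D^*(v_{i,+}^L\, u\, v_{i,+}^R) \;=\; v_{i,+}^L\, D^*(u)\,v_{i,+}^R \;+\; \sum_{v'\in D^*(v)}(v')_{i,+}^L\,u\,(v')_{i,+}^R.
\]
Expanding the left-hand side by the Leibniz rule for $D^*$ reduces this to
\[
D^*(v_{i,+}^L)\,u\,v_{i,+}^R + v_{i,+}^L\, u\, D^*(v_{i,+}^R) \;=\; \sum_{v'\in D^*(v)}(v')_{i,+}^L\,u\,(v')_{i,+}^R.
\]
The technical heart is that removing a single $x$ from $v=v_{i,+}^L\,v_{i,+}^R$ never moves any $z$, and---crucially---since $v_{i,+}^L$ ends in $z$, every $x$ in $v$ belongs unambiguously to either $v_{i,+}^L$ or $v_{i,+}^R$; consequently $(v')_{i,+}^L$ and $(v')_{i,+}^R$ are obtained by performing the removal in $v_{i,+}^L$ (respectively $v_{i,+}^R$) alone, and the sum over $v'$ splits exactly as on the right. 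The case $L=d_{i,-}^*$ is symmetric, using that $v_{i,-}^R$ starts with $z$. The main obstacle---avoiding boundary ambiguity at the junction of the ``$L$'' and ``$R$'' pieces, which would force awkward correction terms in the direct ``insert $x$'' computation---is precisely what this $z$-termination (resp.\ $z$-initiation) property removes, making the adjoint route substantially cleaner than a direct verification.
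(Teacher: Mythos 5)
Your proof is correct, but it runs in the opposite direction from the paper's. The paper verifies the identity directly on the adjoint side: it takes the explicit formulas $d_{i,+}^*(w)=(1\otimes w_{i,+}^L)\Delta_{\sh}(w_{i,+}^R)$ and $d_{i,-}^*(w)=\Delta_{\sh}^{op}(w_{i,-}^L)(1\otimes w_{i,-}^R)$ from Proposition \ref{di*}, splits $w\sh x$ using $azb\sh x=(a\sh x)zb+az(b\sh x)$ applied at the $z$ marking the $L/R$ junction, and reassembles the two resulting terms into $d_{i,\pm}^*(w)\sh(1\otimes x)+d_{i,\pm}^*(w)\sh(x\otimes 1)$ (for $\Delta_{\sh}$ it simply uses that $\Delta_{\sh}$ is a morphism for $\sh$ with $\Delta_{\sh}(x)=1\otimes x+x\otimes1$). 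You instead dualize the whole statement: after identifying the adjoint of $-\sh x$ as the ``remove one $x$'' derivation $D^*$ with $D^*(x)=1$, $D^*(z)=0$, the claim becomes the Leibniz rule for $D^*$ against the concatenation product $\mu$ and against the insertion maps $d_{i,\pm}$ themselves, where the only content is that the cut defining $w_{i,\pm}^{L},w_{i,\pm}^{R}$ in Definition \ref{def di} is anchored at a $z$ and hence unaffected by deleting an $x$. The two proofs exchange the same combinatorial fact --- your ``deletion never crosses the junction'' is precisely the adjoint of the paper's $azb\sh x=(a\sh x)zb+az(b\sh x)$ --- but your packaging avoids manipulating the formulas of Proposition \ref{di*} altogether and works with the much simpler maps $d_{i,\pm}$, at the modest cost of justifying the double-adjoint reduction (legitimate here because all bigraded pieces are finite dimensional and the pairings are perfect, even though $-\sh x$ shifts the weight by one, so $D$ and $D^*$ are only bigraded up to a degree shift). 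Both arguments are complete; yours is arguably the more conceptual, the paper's the more self-contained.
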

\begin{proof}
The formula for $\Delta_\sh$ is due to the fact that $\Delta_\sh$ is a morphism for the shuffle product and that $\Delta_\sh(x)=1\otimes x +x \otimes 1$.  
We now prove the formula for $L=d_{i,+}^*$. We have seen previously that, for $a,b \in \K \langle x,z \rangle$:
\begin{equation}\label{sh}
 azb\sh x=(a \sh x)z b+az(b\sh x). 
\end{equation} 
Hence, $ w\sh x= (w_{i,+,L}\sh x)zw_{i,+}^R+ w_{i,+,L}z(w_{i,+}^R\sh x)$, where $w_{i,+}^L=w_{i,+,L}z$, and by proposition \ref{di*}:
\begin{align*}
d_{i,+}^*(w\sh x)&=(1\otimes  (w_{i,+,L}\sh x)z)\Delta_{\sh}(w_{i,+}^R) +(1\otimes w_{i,+,L}z)\Delta_{\sh}(w_{i,+}^R\sh x)=A+B.
\end{align*}
 \begin{align*}
\text{where} \quad A&=(1\otimes  (w_{i,+,L}\sh x)z)\Delta_{\sh}(w_{i,+}^R) +(1\otimes w_{i,+,L}z)(\Delta_{\sh}(w_{i,+}^R) \sh (1\otimes x))\\
&\overset{(\ref{sh})}{=}((1\otimes w_{i,+}^L)\Delta_\sh(w_{i,+}^R))\sh (1\otimes x)=d_{i,+}^*(w)\sh (1\otimes x)
\end{align*} 
\begin{align*}
\text{and} \quad B&=(1\otimes w_{i,+,L}z)(\Delta_{\sh}(w_{i,+}^R) \sh (x\otimes 1))\\&
=((1\otimes w_{i,+}^L)\Delta_{\sh}(w_{i,+}^R))\sh (x\otimes 1)=d_{i,+}^*(w) \sh (x\otimes 1).
\end{align*}
This proves the proposition for $L=d_{i,+}^*$. We prove the proposition for $L=d_{i,-}^*$ using similar arguments and the decomposition $w\sh x= (x\sh w_{i,-}^L) zw_{i,-,R} + w_{i,-}^Lz(x\sh w_{i,-,R})$, where $w_{i,-}^R=zw_{i,-,R}$.
\end{proof}

\begin{corollary}\label{M*Ihara Kxzx}
The space $\phi(U)=\K\oplus \K \langle x, z \rangle \sh x $ is a coideal for $ \{-,-\}^*$, i.e:
 $$\{-,-\}^*(\phi(U))=\phi(U)\otimes \K \langle x, z \rangle + \K \langle x, z \rangle \otimes\phi(U).$$
\end{corollary}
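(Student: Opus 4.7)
The plan is to reduce this immediately to the decomposition of $\{-,-\}^*$ given in proposition \ref{decomp coIhara} together with the compatibility established in proposition \ref{prop wshx}. Since $\phi(U) = \K \oplus \K\langle x,z\rangle \sh x$, I would treat the constant part and the shuffle part separately.

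For the constant part, I observe that the $m=0$ case of proposition \ref{decomp coIhara} gives $\{-,-\}^*(1) = (\mathrm{id}-\tau) \circ \Delta_\sh(1) = (\mathrm{id}-\tau)(1\otimes 1) = 0$, which lies trivially in $\phi(U)\otimes \K\langle x,z\rangle + \K\langle x,z\rangle\otimes \phi(U)$.

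For an arbitrary element $w \sh x$ with $w \in \K\langle x,z\rangle$, the key observation is that for any $T = \sum_j a_j \otimes b_j \in \K\langle x,z\rangle^{\otimes 2}$, the tensor-shuffle products satisfy
\begin{align*}
T \sh (1\otimes x) &= \sum_j a_j \otimes (b_j \sh x) \in \K\langle x,z\rangle \otimes (\K\langle x,z\rangle \sh x),\\
T \sh (x\otimes 1) &= \sum_j (a_j \sh x)\otimes b_j \in (\K\langle x,z\rangle \sh x)\otimes \K\langle x,z\rangle,
\end{align*}
both of which are contained in $\phi(U)\otimes \K\langle x,z\rangle + \K\langle x,z\rangle\otimes \phi(U)$. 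Applying proposition \ref{prop wshx} to each of the operators $L \in \{\Delta_\sh, d_{i,+}^*, d_{i,-}^*\}$ then yields
\[ L(w \sh x) = L(w) \sh (1\otimes x + x\otimes 1) \in \phi(U)\otimes \K\langle x,z\rangle + \K\langle x,z\rangle\otimes \phi(U). \]

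Finally, I sum over $i$ and combine the three types of operators as in proposition \ref{decomp coIhara}, noting that the subspace $\phi(U)\otimes \K\langle x,z\rangle + \K\langle x,z\rangle\otimes \phi(U)$ is stable under both addition and the swap involution $\tau$; hence applying $(\mathrm{id}-\tau)$ preserves it. This gives $\{-,-\}^*(w\sh x)$ in the required subspace, completing the proof of the inclusion. There is no real obstacle here: the entire proposition is an essentially formal consequence of proposition \ref{prop wshx} and the decomposition in proposition \ref{decomp coIhara}, with the shuffle compatibility doing all of the work.
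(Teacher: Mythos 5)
Your proof is correct and follows essentially the same route as the paper: combine the decomposition of $\{-,-\}^*$ from proposition \ref{decomp coIhara} with the compatibility $L(w\sh x)=L(w)\sh(1\otimes x+x\otimes 1)$ of proposition \ref{prop wshx}, handling the summand $\K$ separately (the paper invokes weight-gradedness where you compute $\{-,-\}^*(1)=0$ directly, a negligible difference). You have merely written out the routine verifications that the paper leaves implicit.
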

\begin{proof}
The map $\{-,-\}^*$ is weight graded. Hence, the image of the weight $0$ part $\K$ of $\K \langle x, z \rangle$ by $\{-,-\}^*$ lies in the weight $0$ part $\K^{\otimes2}$ of $\K \langle x, z \rangle^{\otimes2}$. The result follows from this, the previous proposition and the decomposition of the Ihara cobracket (proposition \ref{decomp coIhara}).
\end{proof}
\begin{corollary}\label{coideal U}
$U$ is a coideal for $\phi^\star \{-,-\}^*$.
\end{corollary}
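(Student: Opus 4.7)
The statement is an immediate consequence of Corollary \ref{M*Ihara Kxzx} together with the definition of the pullback, so my plan is to give a short direct argument rather than a substantial new construction. The key observation is that $\phi : V \to \K\langle x,z\rangle$ is a bigraded isomorphism, so applying $\phi^{-1}\otimes \phi^{-1}$ sends the coideal condition for $\phi(U)$ inside $\K\langle x,z\rangle$ term-by-term to the coideal condition for $U$ inside $V$.

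Concretely, I would argue as follows. By definition of the pullback,
\[
\phi^\star\{-,-\}^* = (\phi^{-1}\otimes \phi^{-1})\circ \{-,-\}^* \circ \phi.
\]
Take $u\in U$. Then $\phi(u)\in \phi(U)$, so Corollary \ref{M*Ihara Kxzx} gives
\[
\{-,-\}^*(\phi(u)) \in \phi(U)\otimes \K\langle x,z\rangle + \K\langle x,z\rangle \otimes \phi(U).
\]
Applying the isomorphism $\phi^{-1}\otimes \phi^{-1}$ and using that $\phi^{-1}(\phi(U)) = U$ and $\phi^{-1}(\K\langle x,z\rangle) = V$ (both because $\phi$ is a bijection $V\to \K\langle x,z\rangle$), we conclude
\[
\phi^\star\{-,-\}^*(u) \in U\otimes V + V\otimes U,
\]
which is exactly the coideal condition for $U$.

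There is no real obstacle here: the only point to verify is that the tensor $\phi^{-1}\otimes\phi^{-1}$ of the inclusions intertwines the two coideal conditions, and this is a formal property of isomorphisms acting on sums of tensor subspaces. In particular, no additional computation involving the operators $d_{i,\pm}^*$ or $\Delta_\sh$ is needed, since all the combinatorial work has already been carried out in Proposition \ref{prop wshx} and Corollary \ref{M*Ihara Kxzx}.
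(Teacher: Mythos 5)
Your proof is correct and is exactly the argument the paper intends: the corollary is stated without proof precisely because it follows formally from Corollary \ref{M*Ihara Kxzx} by conjugating with the isomorphism $\phi^{-1}\otimes\phi^{-1}$, as you do. No gaps; the observation that $(\phi^{-1}\otimes\phi^{-1})$ carries $\phi(U)\otimes\K\langle x,z\rangle + \K\langle x,z\rangle\otimes\phi(U)$ onto $U\otimes V + V\otimes U$ is all that is needed.
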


\section{Proof of the main results}\label{main results}
In this section we prove the main results $(a),(b),(c)$ and $(d)$ of the paper announced in the introduction of the paper. In the first subsection we establish in theorem \ref{equiv C} an equivalence between different conditions: $(C1)$ $\ls$ is preserved by the Ihara bracket, $(C2)$ $F$ is a coideal for $\phi^\star\{-,-\}^*$, $(C3)$ $W_R$ is a coideal for $\tilde{\delta}$, $(C4)$ $\D$ is a Lie coalgebra under the formulas given by Goncharov. This proves result $(d)$ stating the equivalence between $(C1)$ and $(C4)$. In the same theorem we show that if one of the previous (equivalent) conditions is satisfied then: (1) $\phi^\star\{-,-\}^*$ induces a Lie cobraket $\delta_{V/F}$ on the quotient $V/F$ for which $\varepsilon: V/F\to \ls^\vee$ of theorem \ref{theorem duality} is an isomorphism of bigraded Lie coalgeras, (2) we have a diagram of bigraded Lie coalgebra isomorphisms $\D\to W/W_R\to V/F \to \ls^\vee$. Using the fact that $(C1)$ was proved by Schneps (or the fact that $(C4)$ was proved by Goncharov) we deduce that the previous conditions and statements are all true. In particular, $\D$ is isomorphic to the bigraded Lie coalgebra dual to $(\ls,\{-,-\})$, proving the main result $(a)$. To prove theorem \ref{equiv C} we use essentially theorem \ref{theorem duality}, proposition \ref{comp Ih delta} and corollary \ref{M*Ihara Kxzx}.\\
In the second subsection we prove results $(c)$ (paragraph \ref{Proof (c)}) and $(d)$ (paragraph \ref{Proof (d)}). To prove $(c)$ we introduce a graded isomorphism $h_m: \K[x_1,\dots,x_m]^\vee \to W_m$ (the subscript $m$ is for the depth $m$ component) and show in proposition \ref{prop hWR}, using corollary \ref{cor ker coeff} of the reminders section, that the image by $h_m$ of the linear forms orthogonal to $\Dsh$ is $(W_R)_m$. This proves that $W_m/(W_R)_m$ is isomorphic to $\Dsh_m^\vee$ and $(c)$ follows since $W/W_R$ is bigraded isomorphic to $\D$ by proposition \ref{definition dihedral}. To prove $(d)$ we consider the graded map $h_m: \K[x_1,\dots,x_m]^\vee \to W_m$, the natural inclusion $i: W_m \to V_m$ and the map $\beta_m:V_m \to \K\langle x,z \rangle_m^\vee$ induced by the pairing $\langle-,-\rangle_\phi$ (the indices $m$ are for depth $m$ components). By theorem \ref{theorem duality}, remark \ref{rmk V/F} and proposition \ref{cor Dsh W} the graded map $\theta=\beta_m\circ i \circ h_m$ induces, with respect to the restriction maps $\K[x_1,\dots,x_m]^\vee \to\Dsh_m^\vee$ and $\K\langle x,z \rangle_m^\vee \to \ls_m^\vee$, an isomorphism $\Dsh_m^\vee\to \ls_m^\vee$. We show using a direct computation that $\theta=f_m^\vee$ and then deduce $(d)$ using an algebraic argument.

\subsection{Proof of main results (a) and (b)}\label{proof a,b}
Denote by $\{-,-\}_{\vert \ls}^{\bar{\vee}}$ the bigraded dual map to the Ihara bracket $\{-,-\}$ restricted to $\ls$. The bigraded dual $\ls^\vee$ of $\ls$ equipped with $\{-,-\}_{\vert \ls}^{\bar{\vee}}$ is a bigraded Lie coalgebra (see section \ref{Section rappel}). Recall that $\{-,-\}^*$ is the adjoint of $\{-,-\}$ with respect to $\langle-,-\rangle$ and $\langle-,-\rangle^{\otimes 2}$ and that $\phi$ is the isomorphism of proposition \ref{morphisme phi}.
\begin{theorem}\label{equiv C}
\baselineskip 18pt
\begin{itemize}
\item[1)] The following conditions are all equivalent: 
 \begin{itemize}
\item[(C1)] $\ls$ is preserved by the Ihara bracket $\{-,-\}$.
\item[(C2)] $F$ is a coideal for $\phi^\star\{-,-\}^*: V \to V\otimes V$, i.e. $\phi^\star\{-,-\}^*(F)\subset F\otimes V+V\otimes F$. 
\item[(C3)] $W_R$ is a coideal for $\tilde{\delta}:W\to W\otimes W$.
\item[(C4)] $\D$ is a Lie coalgebra under the formulas given by Goncharov. 
\end{itemize}
\item[2)] Assume that one of the equivalent conditions in $(1)$ holds:
\begin{itemize}
 \item[2.1)]$\phi^\star \{-,-\}^*$ induces a bigraded Lie cobracket $\delta_{V/F}:V/F \to V/F \otimes V/F$ such that $(V/F,\delta_{V/F})\overset{\varepsilon}{\longrightarrow} (\ls^\vee,\{-,-\}_{\vert \ls}^{\bar{\vee}})$, with $\varepsilon$ the map of theorem \ref{theorem duality}, is an isomorphism of bigraded Lie coalgebras.
\item[2.2)] We have the following diagram of bigraded Lie coalgebra isomorphisms:
           $$ \D \overset{\eta}{\longrightarrow} (W/W_R,\delta) \overset{\bar{i}}{\longrightarrow}(V/F,\delta_{V/F}) \overset{\varepsilon}{\longrightarrow}  (\ls^\vee,\{-,-\}_{\vert \ls}^{\bar{\vee}}),$$
where $\eta$ is the isomorphism of proposition \ref{definition dihedral}, $\bar{i}$ is the isomorphism induced by the natural inclusion $W\subset V=W\oplus U$  and $\varepsilon$ is as in $(2.1)$.
\end{itemize}
\end{itemize}
\end{theorem}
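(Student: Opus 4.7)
The plan is to close an equivalence cycle $(C1)\Leftrightarrow(C2)\Leftrightarrow(C3)\Leftrightarrow(C4)$ and then derive part $(2)$ from the coideal structure that emerges. The first link, $(C1)\Leftrightarrow(C2)$, is a direct application of proposition \ref{Lie pairing}$(1)$: since $\phi$ is a bigraded isomorphism, a short calculation shows that the pullback $\phi^\star\{-,-\}^*$ is precisely the adjoint of $\{-,-\}$ with respect to the perfect bigraded pairings $\langle-,-\rangle_\phi$ and $\langle-,-\rangle_\phi^{\otimes 2}$, and theorem \ref{theorem duality} identifies $F$ with $\ls^\perp$ in $V$. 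The last link, $(C3)\Leftrightarrow(C4)$, is exactly remark \ref{rmk DW}, which repackages the equivalence between $W_R$ being a coideal for $\tilde\delta$ and the statement $(b)$ of theorem 4.3 of \cite{Gon}.

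The central link is $(C2)\Leftrightarrow(C3)$, and this is where I expect to need to be most careful. Corollary \ref{coideal U} already gives that $U$ is a coideal for $\phi^\star\{-,-\}^*$, so since $F=W_R\oplus U$, condition $(C2)$ reduces to the inclusion $\phi^\star\{-,-\}^*(W_R)\subset F\otimes V+V\otimes F$. Proposition \ref{comp Ih delta} says that $\tilde\delta$ and $\phi^\star\{-,-\}^*$ agree on $W$ modulo $F\otimes V+V\otimes F$, so this reduces further to $\tilde\delta(W_R)\subset F\otimes V+V\otimes F$. The small but essential observation is the identity $(W\otimes W)\cap (F\otimes V+V\otimes F)= W_R\otimes W+W\otimes W_R$, which follows readily from the decompositions $V=W\oplus U$ and $F=W_R\oplus U$; combined with the fact that $\tilde\delta(W_R)\subset W\otimes W$, this reduces $(C2)$ to $(C3)$. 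The main obstacle here is not any single computation but the bookkeeping required to pass between the ``$W$-side'' map $\tilde\delta$ and the ``$V$-side'' map $\phi^\star\{-,-\}^*$ without losing track of the coideal conditions.

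For part $(2)$, assume the equivalent conditions hold. Then $\phi^\star\{-,-\}^*$ descends to a bigraded cobracket $\delta_{V/F}$ on $V/F$, and proposition \ref{Lie pairing}$(2)$, applied to $\ls\subset \K\langle x,z\rangle$ with the pairing $\langle-,-\rangle_\phi$, promotes the map $\varepsilon$ of theorem \ref{theorem duality} to an isomorphism of bigraded Lie coalgebras, yielding $(2.1)$. For $(2.2)$, theorem \ref{Lie structure dihedral} provides $\eta$ as a Lie coalgebra isomorphism as soon as $(C3)$ holds, and $\bar i$ is a bigraded isomorphism by remark \ref{rmk V/F}. The last point is that $\bar i$ intertwines $\delta$ and $\delta_{V/F}$: for $w\in W$, the natural composite $W\otimes W\to V\otimes V\to V/F\otimes V/F$ factors through $\bar i\otimes\bar i$, and the difference $\tilde\delta(w)-\phi^\star\{-,-\}^*(w)$ lies in $F\otimes V+V\otimes F$ by proposition \ref{comp Ih delta}, so is killed in the quotient. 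Finally, since theorem \ref{Ihara bracket old proofs} (Schneps) establishes $(C1)$, all four conditions hold unconditionally, and the diagram in $(2.2)$ is indeed a chain of bigraded Lie coalgebra isomorphisms, which delivers the main results $(a)$ and $(b)$ announced in the introduction.
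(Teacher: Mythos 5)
Your proposal is correct and follows essentially the same route as the paper: the same chain of equivalences via proposition \ref{Lie pairing}, theorem \ref{theorem duality}, corollary \ref{coideal U}, proposition \ref{comp Ih delta} and remark \ref{rmk DW}, including the key intersection identity $(W\otimes W)\cap(F\otimes V+V\otimes F)=W_R\otimes W+W\otimes W_R$ and the same treatment of part $(2)$. No changes needed.
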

\begin{proof}
 We have seen in theorem \ref{theorem duality} that $F$ is the space orthogonal to $\ls$ with respect to the perfect bigraded pairing $\langle -,- \rangle_\phi=\langle \phi(-),- \rangle: V \otimes\K\langle x,z\rangle  \to \K$. Moreover, the adjoint of $\{-,-\}$ with respect to $\langle -,- \rangle_\phi$ is $\phi^\star\{-,-\}^*$. Tese facts imply the equivalence $(C1) \Leftrightarrow (C2)$ and point $(2.1)$ of this proposition (see proposition \ref{Lie pairing}). By definition $V=W\oplus U$ and $F=W_R \oplus U$. Therefore, $(W\otimes W) (\cap F\otimes V+V\otimes F)= W_R\otimes W +W\otimes W_R$, and the equivalence $(C2) \Leftrightarrow (C3)$ follows from the facts: $U$ is a coideal for $\phi^*\{-,-\}^*$ (corollary \ref{coideal U}) and $\tilde{\delta}-\phi^\star\{-,-\}^*(W_R)\subset F\otimes V + V\otimes F$ (proposition \ref{comp Ih delta}). The equivalence $(C3) \Leftrightarrow (C4)$ was established previously (remark \ref{rmk DW}). We still have to prove $(2.2)$. We have proven that $\eta$ is an isomorphism of bigraded Lie coalgebras in theorem \ref{Lie structure dihedral}. Since $(\tilde{\delta}-\phi^\star\{-,-\}^*(W_R))\subset F\otimes V + V\otimes F$ as mentioned earlier in this proof, the isomorphism $\bar{i}$ respects the Lie cobracket. The isomorphism $\varepsilon$ is Lie by $(2.1)$. We have proved the theorem.
\end{proof}
\begin{corollary}\label{cor th}
The conditions $(C1),\dots,(C4)$ are satisfied and the statements $(2.1)$ and $(2.2)$ are true.
\end{corollary}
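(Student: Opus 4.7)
The plan is to observe that Corollary \ref{cor th} is an immediate consequence of Theorem \ref{equiv C} once any single one of the four equivalent conditions $(C1),\dots,(C4)$ has been established in the literature. The equivalences were proved in part (1) of the theorem, and the two statements (2.1) and (2.2) were shown to follow from the assumption that any one of the conditions holds. So no new content is required beyond invoking known results; the entire proof will be a short citation argument.

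Concretely, I would begin by quoting Theorem \ref{Ihara bracket old proofs}, which is the Brown--Schneps result that $\ls$ is preserved by the Ihara bracket $\{-,-\}$. This is precisely condition $(C1)$ of Theorem \ref{equiv C}. Alternatively, and independently, one can cite Goncharov's Theorem 4.3 of \cite{Gon}, which asserts that $\D$ carries a Lie coalgebra structure defined by the formulas of \cite{Gon}; this is condition $(C4)$. Either citation suffices, and it is worth noting both since each gives a self-contained route.

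Next, applying part (1) of Theorem \ref{equiv C}, the validity of $(C1)$ (or equivalently $(C4)$) forces all four conditions $(C1),(C2),(C3),(C4)$ to hold simultaneously. In particular, $(C2)$ tells us that $F$ is a coideal for $\phi^\star\{-,-\}^*$, so part (2.1) of the theorem produces the induced bigraded Lie cobracket $\delta_{V/F}$ on $V/F$ together with the Lie coalgebra isomorphism $\varepsilon:(V/F,\delta_{V/F})\to(\ls^\vee,\{-,-\}_{\vert\ls}^{\bar{\vee}})$. Then part (2.2) of the theorem furnishes the full chain of bigraded Lie coalgebra isomorphisms $\D\xrightarrow{\eta}(W/W_R,\delta)\xrightarrow{\bar{i}}(V/F,\delta_{V/F})\xrightarrow{\varepsilon}(\ls^\vee,\{-,-\}_{\vert\ls}^{\bar{\vee}})$.

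There is effectively no obstacle here: all the mathematical work has been front-loaded into Theorem \ref{equiv C}. The only care needed is to make explicit which external result is being invoked (Schneps/Brown for $(C1)$ or Goncharov for $(C4)$) and to note that the corollary is stated in this form precisely to emphasize that either historical theorem, combined with the new equivalences established in this paper, yields the full package of statements at once.
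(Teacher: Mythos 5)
Your proposal is correct and matches the paper's proof exactly: the paper likewise deduces the corollary by noting that $(C1)$ is established in \cite{Schnepsari} and $(C4)$ in \cite{Gon}, and then invoking Theorem \ref{equiv C}. Your write-up is simply a more detailed version of the same one-line citation argument.
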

\begin{proof}
We have already seen that $(C1)$ is proved in \cite{Schnepsari} and $(C4)$ is proved in \cite{Gon}.
\end{proof}
We have proved the main results $(a)$ and $(b)$ announced in the introduction of the paper. Indeed, $(a)$ states that the dihedral Lie coalgebras $\D$ and $(\ls^\vee,\{-,-\}_{\vert \ls}^{\bar{\vee}})$ are isomorphic bigraded Lie coalgebra and this follows form $(2.2)$, and $(b)$ corresponds to the equivalence $(C1)\Leftrightarrow (C4)$.

\subsection{Proof of main results (c) and (d)}\label{Subsection Dsh ls D}

\subsubsection{\textbf{Proof of $(c)$:}}\label{Proof (c)}
Denote by $W_m$ and $(W_R)_m$ the depth $m$ components of the spaces $W$ and $W_R$ respectively. The depth $m$ components are weight graded. We equip $\K[x_1,\dots,x_m]$ with the grading $\oplus_{k\geq m} K_{k}$, where $K_k$ is the space of polynomials of degree $m-k$ and define for $m\geq 2$ the graded isomorphism $h_m: \K[x_1,\dots,x_m]^\vee \to W_m$, by: 
$$ h_m(\varphi)= \sum_{n_i\geq 1}\varphi(x_1^{n_m-1}\cdots x_m^{n_1-1}) I(n_1,\dots,n_m),$$
for $\varphi  \in \K[x_1,\dots,x_m]^\vee$. The above sum make sense since only finite number of terms is non zero. 
\begin{proposition}\label{prop hWR}
Take $m\geq 2$, $h_m$ as in the previous paragraph and denote by $\Dsh_m'$ the subspace of $\K[x_1,\dots,x_m]^\vee$ of forms orthogonal to $\Dsh_m$. We have $h_m(\Dsh_m')=(W_R)_m$ 
\end{proposition}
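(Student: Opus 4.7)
The plan is to view $h_m$ as $L_S$ for an explicit series and apply Corollary \ref{cor ker coeff}. Take
$$S \;=\; \sum_{n_1,\dots,n_m \geq 1} I(n_1,\dots,n_m)\otimes x_1^{n_m-1}\cdots x_m^{n_1-1} \;\in\; D(W_m\HT \K[x_1,\dots,x_m]).$$
By the very definition of $h_m$, we have $L_S = h_m$. Since $\Dsh_m = \bigcap_{l=1}^{m-1}\bigl(\ker T_{m,*}^{(l)}\cap \ker T_{m,\sh}^{(l)}\bigr)$, the space $\Dsh_m'$ coincides with $(\bigcap_i \ker T_i)'$, and Corollary \ref{cor ker coeff} gives
$$ h_m(\Dsh_m') \;=\; \sum_{l=1}^{m-1} A\bigl((\mathrm{id}_{W_m}\HT T_{m,*}^{(l)})(S)\bigr) \;+\; \sum_{l=1}^{m-1} A\bigl((\mathrm{id}_{W_m}\HT T_{m,\sh}^{(l)})(S)\bigr). $$
Since $m\geq 2$ forces $(W_{1,\mathrm{even}})_m=0$, it remains to identify the first sum with $(\Wst)_m$ and the second with $(\Wsh)_m$.

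For the first sum, a direct computation of $T_\sigma(x_1^{n_m-1}\cdots x_m^{n_1-1})=x_1^{n_{m+1-\sigma(1)}-1}\cdots x_m^{n_{m+1-\sigma(m)}-1}$ shows that the coefficient of $x_1^{b_1-1}\cdots x_m^{b_m-1}$ in $(\mathrm{id}_{W_m}\HT T_{m,*}^{(l)})(S)$ equals $\sum_{\sigma\in S(l,m-l)}I(b_{\sigma^{-1}(m)},\dots,b_{\sigma^{-1}(1)})$. Comparing with the definition of $\Wst$ (where the generating series' coefficient in $\prod(t_i-t_{m+1})^{c_i-1}$ has the form $\sum_{\sigma\in S(p,q)}I(b_{\sigma^{-1}(1)},\dots,b_{\sigma^{-1}(m)})$), the bijection $S(l,m-l)\to S(m-l,l)$, $\sigma\mapsto \rho\sigma\rho$ (where $\rho(j)=m+1-j$), combined with the substitution $b_i\leftrightarrow b_{m+1-i}$ on the index set $(\N^*)^m$, identifies the span of our coefficients with $(\Wst)_{m,m-l}$. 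Summing over $l\in[1,m-1]$ yields $(\Wst)_m$.

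For the second sum, use $T_{m,\sh}^{(l)}=T_{m,*}^{(l)}\circ S_m$: one expands
$$S_m(x_1^{n_m-1}\cdots x_m^{n_1-1}) \;=\; (x_1+\cdots+x_m)^{n_m-1}(x_2+\cdots+x_m)^{n_{m-1}-1}\cdots x_m^{n_1-1}$$
and applies $T_{m,*}^{(l)}$. Setting $v_i^\sigma = x_{\sigma^{-1}(i)}+\cdots+x_{\sigma^{-1}(m)}$ and using the analogous partial sums $u_i^\sigma=t_{\sigma^{-1}(1)}+\cdots+t_{\sigma^{-1}(i)}$ appearing in $\{t_{\sigma^{-1}(1)},\dots,t_{\sigma^{-1}(m)}\}=\sum_n I(n)(u_1^\sigma)^{n_1-1}\cdots(u_m^\sigma)^{n_m-1}$, one checks that under the substitution $x_k\leftrightarrow t_{m+1-k}$ the expression $v_i^{\rho\sigma\rho}$ matches $u_{m+1-i}^{\sigma}$. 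Together with the same index reversal $b\leftrightarrow b'$ as before and the bijection $\sigma\mapsto \rho\sigma\rho$ from $S(l,m-l)$ to $S(m-l,l)$, this identifies the span of the coefficients of $(\mathrm{id}_{W_m}\HT T_{m,\sh}^{(l)})(S)$ with $(\Wsh)_{m,m-l}$; summing over $l$ gives $(\Wsh)_m$.

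Combining the two identifications yields $h_m(\Dsh_m')=(\Wst)_m+(\Wsh)_m=(W_R)_m$. The main obstacle is the $\Wsh$ identification: the operator $T_{m,\sh}^{(l)}$ on $\K[x_1,\dots,x_m]$ does \emph{not} correspond, under any natural isomorphism with $\K[t_1,\dots,t_m]$, to the symmetrization operator producing $\sum_\sigma\{t_{\sigma^{-1}(1)},\dots,t_{\sigma^{-1}(m)}\}$; equality of coefficient spans is genuinely only recovered after one sums over $l\in[1,m-1]$ and invokes the reversal symmetry $S(l,m-l)\leftrightarrow S(m-l,l)$.
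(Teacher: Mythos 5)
Your proof is correct and follows essentially the same route as the paper: both apply Corollary \ref{cor ker coeff} to the series $H=\sum I(n_1,\dots,n_m)\otimes x_1^{n_m-1}\cdots x_m^{n_1-1}$ and then match the resulting coefficient spans with $(\Wst)_m+(\Wsh)_m$ via the reversal $x_i\mapsto x_{m+1-i}$. The paper packages your explicit bijection $\sigma\mapsto\rho\sigma\rho$ and index reversal as the conjugation identities $R_mT_{m,*}^{(l)}R_m=T_{m,*}^{(m-l)}$ and $R_mT_{m,*}^{(l)}L_mR_m=T_{m,\sh}^{(m-l)}$ together with the invariance of coefficient spans under automorphisms of the polynomial factor, but the content is the same.
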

\begin{proof}
Set $H= \sum_{ni\geq 1}  I(n_1,\dots,n_m)\otimes x_1^{n_m-1}\cdots x_m^{n_1-1}$. We recall that $$\Dsh_m= \cap_{l=1}^{m-1}( \mathrm{Ker}(T_{m,*}^{(l)}) \cap \mathrm{Ker}(T_{m,\sh}^{(l)}))),$$
where $T_{m,*}^{(l)}$ and $T_{m,\sh}^{(l)}$ are the graded endomorphisms of $\K[x_1,\dots,x_m]$ defined in subsection \ref{def Dshm}. Applying corollary \ref{cor ker coeff} for $A=W_m, B=\K[x_1,\dots,x_m], S=H$ and $\{T_1,\dots,T_k\}=\{T_{m,*}^{(l)},T_{m,\sh}^{(l)}\}_{l\in[1,m-1]}$, we get that: 
$$h_m(\Dsh_m')=\sum_{l=1}^{m-1} (W_m((\mathrm{id}\otimes  T_{m,*}^{(l)})(H))+W_m((\mathrm{id}\otimes  T_{m,\sh}^{(l)})(H)),$$
where $W_m(Z) \subset W_m$ is the space generated by the coefficients of the series $Z$. Notice that for $m\geq 2$:
$$(W_R)_m=\sum_{l=1}^{m-1} (W_m((\mathrm{id}\otimes  T_{m,*}^{(l)}R_m)(H))+W_m((\mathrm{id}\otimes  T_{m,*}^{(l)}L_mR_m)(H)),$$
where $R_m$ and $L_m$  are the algebra automorphisms of $\K[x_1, \dots , x_m]$ given by $R_m(x_i)= x_{m+1-i}$ and $L_m(x_i)=x_1+\cdots+x_i$ for $i\in[1,m]$. One checks that: 
$$ R_m T_{m,*}^{(l)}R_m= T_{m,*}^{(m-l)} \quad \text{and} R_m T_{m,*}^{(l)}L_mR_m=T_{m-l,\sh}^{(l)}.$$
Since $R_m$ is an involution and in particular an automorphism, we get:
\begin{align*}(W_R)_m &=\sum_{l=1}^{m-1} (W_m((\mathrm{id}\otimes  R_mT_{m,*}^{(l)})(H))+W_m((\mathrm{id}\otimes R_m T_{m,\sh}^{(l)})(H)),\\
&=\sum_{l=1}^{m-1} (W_m((\mathrm{id}\otimes T_{m,*}^{(l)})(H))+W_m((\mathrm{id}\otimes  T_{m,\sh}^{(l)})(H)).
\end{align*}
The last decomposition of $(W_R)_m$ correspond to the one obtained for $h_m(\Dsh_m')$. We have proved the proposition.
\end{proof}
Let $g_m: \K[x_1,\dots,x_m]^\vee/ \Dsh'_m\to \Dsh_m^\vee$ and $\bar{h}_m:  \K[x_1,\dots,x_m]^\vee/\Dsh'_m \to W_m/(W_R)_m$ be the isomorphisms induced by the restriction morphism $\K[x_1,\dots,x_m]^\vee\to  \Dsh_m^\vee$ and the isomorphism $h_m$ respectively.
\begin{corollary}\label{cor Dsh W}
The maps $\bar{h}_mg_{m}^{-1}: \Dsh_m^\vee \to W_m/(W_R)_m$ and $\eta^{-1}\bar{h}_mg_{m}^{-1}:\Dsh_m^\vee\to D_{m,\bullet}$, where $\eta$ is the isomorphism of proposition \ref{definition dihedral} and $D_{m,\bullet}$ is the depth $m$ component of $\D$, are graded isomorphisms.  
\end{corollary}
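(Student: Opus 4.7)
The plan is to assemble the corollary from three ingredients already in hand: the graded isomorphism $h_m : \K[x_1,\dots,x_m]^\vee \to W_m$, the identification $h_m(\Dsh_m') = (W_R)_m$ from Proposition \ref{prop hWR}, and the bigraded isomorphism $\eta : \D \to W/W_R$ of Proposition \ref{definition dihedral}. Everything reduces to a compatibility of kernels under a passage to the quotient, so I expect no real obstacle beyond a careful bookkeeping of the gradings.

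First, I would observe that since $h_m$ is a graded isomorphism and $h_m(\Dsh_m') = (W_R)_m$, its restriction to $\Dsh_m'$ is a graded isomorphism onto $(W_R)_m$. Consequently $h_m$ descends to a graded isomorphism $\bar{h}_m : \K[x_1,\dots,x_m]^\vee/\Dsh_m' \to W_m/(W_R)_m$, which is precisely how $\bar{h}_m$ was defined just before the corollary. Next I would check that the restriction map $r : \K[x_1,\dots,x_m]^\vee \to \Dsh_m^\vee$ is a surjective graded map with kernel exactly $\Dsh_m'$: surjectivity follows because $\Dsh_m$ is a graded subspace of $\K[x_1,\dots,x_m]$ with finite-dimensional homogeneous components, so any homogeneous form on $\Dsh_m$ extends to a homogeneous form on the ambient polynomial ring; the kernel statement is the definition of $\Dsh_m'$. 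Hence $r$ induces the graded isomorphism $g_m$, and the composition $\bar{h}_m \circ g_m^{-1} : \Dsh_m^\vee \to W_m/(W_R)_m$ is a graded isomorphism.

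For the second map, I would use that Proposition \ref{definition dihedral} produces a bigraded isomorphism $\eta : \D \to W/W_R$; in particular, its restriction to the depth $m$ component is a weight-graded isomorphism $\eta_m : D_{m,\bullet} \to W_m/(W_R)_m$. Taking $\eta^{-1}$ of the first isomorphism then yields a graded isomorphism $\eta^{-1} \bar{h}_m g_m^{-1} : \Dsh_m^\vee \to D_{m,\bullet}$, as claimed. This proves result $(c)$ of the introduction, since the weight grading on $\Dsh_m^\vee$ (inherited from $\Dsh_m = \oplus_{k\geq m} \Dsh_m(k-m)$) is compatible with the weight grading on $D_{m,\bullet} = \oplus_{k\geq m} D_{m,k}$ under this map by construction of $h_m$.

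The only genuinely non-formal input here is Proposition \ref{prop hWR}; once that is in place, the corollary is a three-term diagram chase. The main thing to be careful about is that both $\bar{h}_m$ and $g_m$ are truly graded, which amounts to verifying that $\Dsh_m'$ is a homogeneous subspace for the grading on $\K[x_1,\dots,x_m]^\vee$ and that $h_m$ preserves the degree shift $K_k \mapsto W_{m,k}$ fixed in the definition of $h_m$. Both are immediate from the definitions.
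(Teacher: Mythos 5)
Your proposal is correct and follows exactly the route the paper intends: the corollary is stated without proof there, being the immediate combination of Proposition \ref{prop hWR} (so that $h_m$ descends to $\bar{h}_m$), the surjectivity of the restriction map with kernel $\Dsh_m'$ (giving $g_m$), and the bigradedness of $\eta$ from Proposition \ref{definition dihedral}. Your extra care about the gradings and the extension of homogeneous forms is a reasonable filling-in of details the paper leaves implicit.
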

We have proved the main result $(c)$. Note that the above corollary with corollary \ref{cor th} imply that $\ls_m$ (subscript fo depth $m$ component) and $\Dsh_m^\vee$ are isomorphic graded spaces.
\subsubsection{\textbf{Proof of $(d)$:}}\label{Proof (d)}
Denote by $V_m,F_m, \ls_m$ and $\K \langle x,z \rangle_m$ the depth $m$ components of $V, F, \ls$ and $\K \langle x,z \rangle$ respectively and define $\beta_m: V_m \to \K \langle x,z \rangle_m^\vee$ by $v\mapsto \langle \phi(v),-\rangle$. By theorem \ref{theorem duality}, remark \ref{rmk V/F} and proposition \ref{cor Dsh W}, we have the following commutative diagram:
\[\begin{tikzcd}
\K[x_1,\dots,x_m]^\vee \arrow{r}{h_m} \arrow{d}[swap]{r_1} & W_m \arrow{r}{i} \arrow{d}{ } & V_m \arrow{d} \arrow{r}{\beta_m}&  \K \langle x,z \rangle_m^\vee \arrow{d}{r_2} \\
\Dsh^\vee \arrow{r}[swap]{\bar{h}_mg_m^{-1}}{\simeq}& W_m/(W_R)_m\arrow{r}[swap]{\bar{i}}{\simeq}& V_m/F_m\arrow{r}[swap]{\varepsilon}{\simeq}& \ls_m^\vee
\end{tikzcd},\quad (A)\]
where $r_1$ and $r_2$ are the restriction morphisms and the two other vertical morphisms are the quotient maps. For $m\geq 2$, we define the linear map $f_m :  \K \langle x,z \rangle_m \to \K[x_1,\dots,x_m]$  by:
$$ f_m(x^{n_1}z\cdots x^{n_m}zx^{n_{m+1}})=\delta_{0n_{m+1}}x_1^{n_1}x_2^{n_2}\cdots x_m^{n_m},$$
with $\delta_{ab}$ the Kronecker delta. 
\begin{lemma}\label{final lemma} 
For $m\geq 2$, the composition of the top morphisms $\beta_m, i$ and $h_m$ of diagram $(A)$ is equal to $f_m^\vee$.
\end{lemma}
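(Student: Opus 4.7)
The plan is a direct calculation: evaluate the composition $\beta_m\circ i\circ h_m$ on an arbitrary form $\varphi\in\K[x_1,\dots,x_m]^\vee$ and pair with an arbitrary unitary monomial of $\K\langle x,z\rangle_m$, then check that the result matches $f_m^\vee(\varphi)$ on the same monomial.

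First I would unfold $\phi(i(h_m(\varphi)))\in\K\langle x,z\rangle_m$ using the three definitions at hand: $h_m(\varphi)=\sum_{n_i\geq 1}\varphi(x_1^{n_m-1}\cdots x_m^{n_1-1})\,I(n_1,\dots,n_m)$, the inclusion $i$ is the identity on generators, and $\phi(I(n_1,\dots,n_m))=x^{n_m-1}z\cdots x^{n_1-1}z$. So $\phi(i(h_m(\varphi)))$ is a (locally finite) formal sum of monomials ending in $z$ with no trailing power of $x$.

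Next, fix a basis monomial $w=x^{a_1}zx^{a_2}z\cdots x^{a_m}zx^{a_{m+1}}$ of depth $m$, with $a_i\in\N$. Since $\langle-,-\rangle$ makes the unitary monomials of $\K\langle x,z\rangle$ orthonormal, the pairing $\langle\phi(i(h_m(\varphi))),w\rangle$ picks out at most one term of the sum above: it vanishes unless $a_{m+1}=0$ and $n_j=a_{m+1-j}+1$ for $1\leq j\leq m$. Under this substitution the coefficient $\varphi(x_1^{n_m-1}\cdots x_m^{n_1-1})$ becomes $\varphi(x_1^{a_1}x_2^{a_2}\cdots x_m^{a_m})$, hence
\[
\beta_m(i(h_m(\varphi)))(w)=\delta_{0,a_{m+1}}\,\varphi(x_1^{a_1}\cdots x_m^{a_m}).
\]
On the other hand, by definition of $f_m$ and of the dual map, $f_m^\vee(\varphi)(w)=\varphi(f_m(w))=\delta_{0,a_{m+1}}\,\varphi(x_1^{a_1}\cdots x_m^{a_m})$. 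The two linear forms agree on every basis vector of $\K\langle x,z\rangle_m$, so $\beta_m\circ i\circ h_m=f_m^\vee$.

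There is no real obstacle here; the only point requiring care is a bookkeeping check on index orders. The mirror reversal built into $h_m$ (coefficients indexed by $x_1^{n_m-1}\cdots x_m^{n_1-1}$) is exactly compensated by the mirror reversal built into $\phi$ ($I(n_1,\dots,n_m)\mapsto x^{n_m-1}z\cdots x^{n_1-1}z$), so that after the change of variables $n_j=a_{m+1-j}+1$ one recovers the straight (non-reversed) monomial $x_1^{a_1}\cdots x_m^{a_m}$ and the Kronecker factor $\delta_{0,a_{m+1}}$ that together define $f_m$.
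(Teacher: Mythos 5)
Your proposal is correct and is essentially the same direct computation as the paper's proof: pair $\phi(i(h_m(\varphi)))$ against a basis monomial $x^{l_1}z\cdots x^{l_m}zx^{l_{m+1}}$, observe that orthonormality of monomials selects the single term with $n_j=l_{m+1-j}+1$ and $l_{m+1}=0$, and match the result with $\varphi(f_m(w))$. The index bookkeeping you flag (the two mirror reversals cancelling) is exactly the point the paper's computation relies on, so nothing is missing.
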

\begin{proof}
Take $w=x^{l_1}z\cdots x^{l_m}zx^{l_{m+1}}$. We have for $\varphi\in\K[x_1,\dots,x_m]^\vee$:
\begin{align*}
\beta_m ih_m(\varphi)(w)&=\langle \phi(\sum_{n_i\geq 1} \varphi(x_1^{n_m-1}\cdots x_m^{n_1-1}))I(n_1,\dots,n_m), w\rangle\\
&=\sum_{n_i\geq 1} \varphi(x_1^{n_m-1}\cdots x_m^{n_1-1})\langle x^{n_m-1}z\cdots x^{n_1-1}z, x^{l_1}z\cdots x^{l_m}zx^{l_{m+1}}\rangle\\
&=\delta_{0l_{m+1}}\varphi(x_1^{l_1}\cdots x_m^{l_m})=\varphi(f_m(w))=f_m^\vee(\varphi)(w).
\end{align*}
This proves the lemma.
\end{proof}

\begin{proposition}\label{compatible morphisms}
For $m \geq 2$, $f_m$ restricts to a graded isomorphism $\bar{f}_m:\ls_m \to \Dsh_m$ and the following diagram of weight graded isomorphisms commutes: 
\[\begin{tikzcd}
 & W_m/(W_R)_m  \arrow{dr}{\varepsilon\circ \bar{i}} \\
 \Dsh_m^\vee \arrow{ur}{\bar{h}_m\circ g_m^{-1}} \arrow{rr}{\bar{f}_m^\vee} && \ls_m^\vee
\end{tikzcd}\]
\end{proposition}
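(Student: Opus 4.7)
The plan is to deduce everything from Lemma \ref{final lemma} together with the commutativity of diagram $(A)$, working entirely by duality so no direct computation on $\ls_m$ or $\Dsh_m$ is needed.

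First, I would combine Lemma \ref{final lemma} with the commutativity of $(A)$ to obtain the identity
\[ r_2 \circ f_m^\vee \;=\; (\varepsilon \circ \bar{i} \circ \bar{h}_m \circ g_m^{-1}) \circ r_1, \]
where $r_1 : \K[x_1,\dots,x_m]^\vee \to \Dsh_m^\vee$ and $r_2 : \K\langle x,z\rangle_m^\vee \to \ls_m^\vee$ are the restriction morphisms. By construction $\ker r_1 = \Dsh_m'$ and $\ker r_2 = \ls_m^\perp$ (the forms vanishing on $\ls_m$); since the bottom composition is a graded isomorphism, it follows at once that
\[ f_m^\vee(\Dsh_m') \;\subset\; \ls_m^\perp. \]

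Second, I would translate this inclusion by adjunction: for every $\varphi \in \Dsh_m'$ and every $\psi \in \ls_m$, $\varphi(f_m(\psi)) = f_m^\vee(\varphi)(\psi) = 0$. Working weight by weight (each homogeneous component is finite dimensional), $(\Dsh_m')^{\perp} = \Dsh_m$ inside $\K[x_1,\dots,x_m]$, so $f_m(\psi) \in \Dsh_m$. This produces the well-defined graded restriction $\bar{f}_m : \ls_m \to \Dsh_m$. Its graded dual $\bar{f}_m^\vee : \Dsh_m^\vee \to \ls_m^\vee$ is then precisely the map induced on quotients by $f_m^\vee$, and by the identity above equals
\[ \bar{f}_m^\vee \;=\; \varepsilon \circ \bar{i} \circ \bar{h}_m \circ g_m^{-1}. \]

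Third, the right-hand side is a composite of three graded isomorphisms (by Theorem \ref{theorem duality}, Remark \ref{rmk V/F} and Corollary \ref{cor Dsh W}), so $\bar{f}_m^\vee$ is a graded isomorphism; since $\ls_m$ and $\Dsh_m$ are weight graded with finite-dimensional components, this forces $\bar{f}_m$ itself to be a graded isomorphism in each weight, proving the first assertion of the proposition. The commutativity of the triangle is then exactly the displayed identity for $\bar{f}_m^\vee$.

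The only mildly delicate point is the finite-dimensional bookkeeping that turns $f_m^\vee(\Dsh_m') \subset \ls_m^\perp$ into the set-theoretic inclusion $f_m(\ls_m) \subset \Dsh_m$ and then into bijectivity of $\bar{f}_m$; this must be carried out one weight at a time, but is otherwise entirely formal once Lemma \ref{final lemma} and the commutativity of $(A)$ are in place.
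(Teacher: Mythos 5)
Your proposal is correct and follows essentially the same route as the paper: both derive the identity $r_2\circ f_m^\vee=(\varepsilon\circ\bar{i}\circ\bar{h}_m\circ g_m^{-1})\circ r_1$ from Lemma \ref{final lemma} and diagram $(A)$, and then dualize to obtain the restriction $\bar{f}_m:\ls_m\to\Dsh_m$ and the commutativity of the triangle. The only difference is presentational --- the paper dualizes the whole square at once to get a diagram of inclusions, whereas you trace through kernels and orthogonal complements weight by weight --- but the underlying argument is identical.
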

\begin{proof}
It follows from diagram $(A)$ and the previous lemma (lemma \ref{final lemma}) that the following diagram commutes:
\[\begin{tikzcd}
\K[x_1,\dots,x_m]^\vee \arrow{r}{f_m^\vee}\arrow{d}[swap]{r_1}&  \K \langle x,z \rangle_m^\vee \arrow{d}{r_2} \\
\Dsh^\vee \arrow{r}[swap]{\varepsilon\circ \bar{i} \circ \bar{h}_m\circ g_m^{-1}}{\simeq}& \ls_m^\vee
\end{tikzcd},\quad (B)\]
with $r_1$ and $r_2$ the restriction morphisms.
Taking the dual of this diagram we get the following commutative diagram:
\[\begin{tikzcd}
\K[x_1,\dots,x_m]& \arrow{l}[swap]{f_m} \K \langle x,z \rangle_m\\
\Dsh\arrow{u}{i_1} &\arrow{l}[swap]{\simeq} \ls_m \arrow{u}[swap]{i_2}
\end{tikzcd},\]
where $i_1$ and $i_2$ are the natural inclusions. This shows that $f_m$ restricts to a graded  isomorphism $\bar{f}_m:\ls_m \to \Dsh_m$ and that the isomorphism $(\varepsilon\circ \bar{i})\circ (\bar{h}_m\circ g_m^{-1}) :\Dsh^\vee \to \ls_m^\vee$ in diagram $(B)$ is $\bar{f}_m^\vee$. We have proved the proposition.
\end{proof}
We have proved $(d)$. Indeed, one can replace $W_m/(W_R)_m$ by $D_{m,\bullet}$ in the diagram of the last proposition.

\renewcommand{\abstractname}{\textbf{Acknowledgments}}
\begin{abstract} I would like to thank Benjamin Enriquez. A big part of this work was done under his supervision during my master 2 internship (thesis).
\end{abstract}

\bibliographystyle{alpha}
\bibliography{Biblio}{}

 \end{document}